\newcommand \seq[1]{{\left\langle{#1}\right\rangle}}
\newcommand{\rest}[1]{\!\!\upharpoonright\!{#1}} 
\newcommand \tth{{}^{\textup{th}}}
\DeclareMathOperator \dom{dom}
\newcommand{\cero}{\mathbf{0}}
\newcommand{\converge}{\!\!\downarrow}
\newcommand{\diverge}{\!\!\uparrow}
\newcommand{\w}{\omega}
\newcommand \s{\sigma}
\renewcommand \le {\leqslant}
\renewcommand \ge {\geqslant}
\newcommand \vphi{\varphi}
\newcommand \Tur{\textup{T}}
\newcommand \SJT{\textup{SJT}}
\newcommand \finn{\textup{\texttt{fin}}}
\renewcommand \top{\textup{\texttt{top}}}
\newcommand \PPP{\mathcal P}
\newcommand \NNN{\mathcal N}
\let\littlehat\hat
\renewcommand \hat{\widehat}
\renewcommand \bar{\overline}
\newcommand \conc[2]{{#1}\littlehat{\,\,}{#2}}
\newcommand \Par{\mathtt{Par}}
\newcommand \stage{\emph{stage} }
\newcommand\ba{{\mathbf{a}}}
\newcommand \LR{\textup{LR}}
\theoremstyle{plain}
\newtheorem{theorem}{Theorem}[section]
\newtheorem{lemma}[theorem]{Lemma} 
\newtheorem{corollary}[theorem]{Corollary}
\theoremstyle{definition}
\newtheorem{definition}[theorem]{Definition} 
\theoremstyle{remark}
\title{Pseudo-jump inversion and SJT-hard sets}
\author{Rod Downey}
\address{School of Mathematics, Statistics and Operations Research, Victoria University of Wellington,
  Wellington, New Zealand}
\email{\href{mailto:downey@msor.vuw.ac.nz}%
{downey@msor.vuw.ac.nz}}
\urladdr{\url{http://homepages.mcs.vuw.ac.nz/~downey/}}
\author{Noam Greenberg}
\address{School of Mathematics, Statistics and Operations Research, Victoria University of Wellington,
  Wellington, New Zealand}
\email{\href{mailto:greenberg@msor.vuw.ac.nz}%
{greenberg@msor.vuw.ac.nz}}
\urladdr{\url{http://homepages.mcs.vuw.ac.nz/~greenberg/}}
\thanks{Both authors were supported by the Marsden Fund of New Zealand.}
\begin{document}

\begin{abstract}
	There are noncomputable c.e.\ sets, computable from every $\SJT$-hard c.e.\ set. This yields a natural pseudo-jump operator, increasing on all sets, which cannot be inverted back to a minimal pair or even avoiding an upper cone. 
\end{abstract}

\today
\maketitle

\section{Introduction}

While interactions between measure theory and computability theory can be traced
back to the 1950's through the work of de Leeuw, Moore, Shannon and Shapiro
\cite{deLeeuwMooreShannonShapiro} and Spector \cite{Spector:Hyprdegrees}, most of 
the spectacular development of these interactions
has really occurred in the last decade. 
Foremost in this development has been the use of methods from computability 
theory to understand and calibrate algorithmic randomness (see, for example,
Downey and Hirschfeldt \cite{DowneyHirschfeldtBook}, Nies \cite{Nies:Book} and Downey, Hirschfeldt, Nies and Terwijn \cite{DHNT:Callibrating}).
What has been less apparent is the extremely fruitful interaction the other 
way. That is, the use of algorithmic randomness to help us to understand computation. 
This paper is an example of this kind of interaction. 
We will use methods emanating from issues around $K$-triviality to answer 
a rather longstanding question in classical computability, explicitly 
articulated in Coles, Downey, Jockusch and LaForte \cite{ColesDowneyJockuschLaForte:CompletingPseudojump},
but going back to the papers of Jockusch and Shore \cite{JockuschShore:psudojump1,JockuschShore:pseudojump2}: we show that there is a natural pseudo-jump operator, increasing on all sets, which cannot be inverted while avoiding an upper cone. The techniques that we introduce in order to prove this result are far from what might have been tried  before the development of the randomness-related concepts of the last decade. We believe that they will have many other applications. 

The origins of the methods of this paper are attempts of  
combinatorial characterisations of the notion of $K$-triviality.
$K$-triviality has turned out to be a very important concept in algorithmic 
randomness. This concept originated in the work of Solovay \cite{Solovay:manuscript}, 
and was more recently developed starting with Downey, Hirschfeldt, Nies and Stephan \cite{DHNS:Trivial}. 
Although this notion is defined in terms of initial-segment complexity (a set is $K$-trivial if every initial segment of it is incompressible using a prefix-free machine),
%
the celebrated work of Nies, Hirschfeldt and others shows that $K$ triviality coincides with notions such as lowness for $K$, lowness for Martin-L{\"o}f randomness, lowness for weak 2-randomness, and being a base for randomness. All of these equivalent concepts express 
a lack of derandomisation power of an oracle with respect to some notion of algorithmic randomness: for example, a set $A\in 2^\w$ is low for Martin-L{\"o}f randomness if every set which is Martin-L{\"o}f random remains Martin-L{\"o}f random relative to $A$; in other words, $A$ cannot detect patterns in Martin-L\"{o}f random sets. We refer the reader to \cite{DowneyHirschfeldtBook,DHNT:Callibrating,Nies:Book,Nies:LownessRandomness,Nies:EliminatingConcepts} for details of such results.

One key question we might ask is whether there is some 
way to characterise classes like the $K$-trivials in terms 
of computability-theoretic considerations not involving randomness but ``purely discrete''
concepts like the Turing jump. In this context,
Terwijn \cite{Terwijn:PhD}, and then Terwijn and Zambella \cite{TerwijnZambella} found a new direction in this
programme.
They discovered that we could use what is called \emph{tracing} 
to give insight into such lowness concepts.
Tracing had its origins in set theory (see \cite{Raisonnier}), but in computability the concept is 
defined as follows.

\begin{definition}\label{def:trace}
	A \emph{trace} for a partial function $\psi\colon \w\to \w$ is a sequence $T=\seq{T(z)}_{z<\w}$ of finite sets such that for all $z\in \dom \psi$, $\psi(z)\in T(z)$. 
\end{definition}

Thus, a trace for a partial function $\psi$ indirectly specifies the values of $\psi$ by providing finitely many possibilities for each value; it provides a way of ``guessing'' the values of the function $\psi$. Such a trace is useful if it is easier to compute than the function $\psi$ itself. 
In some sense the notion of a trace is quite old in computability 
theory. W.\ Miller and Martin  \cite{MillerMartin:Hyperimmune} characterized the hyperimmune-free degrees
as those Turing degrees ${\ba}$ such that every (total) function $h\in {\ba}$ has a computable trace (the more familiar, but equivalent, formulation, is in terms of domination). In the same spirit, Terwijn and Zambella used a uniform version of hyperimmunity to characterise lowness for Schnorr randomness, thereby giving a 
``combinatorial'' characterisation of this lowness notion.

In this paper we are concerned not with how hard it is to compute a trace, but rather, how hard it is to enumerate it. 

\begin{definition}\label{def: c.e. trace} 
	A trace $T = \seq{T(z)}$ is \emph{computably enumerable in a Turing degree $\ba$} if the set of pairs $\left\{ (x,z)  \,:\, x\in T(z) \right\}$ is c.e.\ in $\ba$. 
\end{definition}
In other words, if uniformly in $z$, an oracle in $\ba$ can enumerate the elements of $T(z)$. It is guaranteed that each set $T(z)$ is finite, and yet if $T$ is merely c.e.\ in $\ba$, we do not expect $\ba$ to know when the enumeration of $T(z)$ ends. Thus, rather than using the exact size of each element of the trace, we use effective bounds on this size to indicate how strong a trace is: the fewer options for the value of a function, the closer we are to knowing what that value is. The bounds are known as order functions; they calibrate rates of growth of computable functions.  
	%
%

\begin{definition}\label{def:orders}
	An \emph{order function} is a nondecreasing, computable and unbounded function $h$ such that $h(0)>0$. If $h$ is an order function and $T=\seq{T(z)}$ is a trace, then we say that $T$ is an \emph{$h$-trace} (or that $T$ is \emph{bounded by $h$}) if for all $z$, $|T(z)|\le h(z)$. 
\end{definition}

In addition to measuring the sizes of c.e.\ traces, order functions are used to define uniform versions of traceability notions. For example, \emph{computable traceability}, the uniform version of hyperimmunity used by Terwijn and Zambella, is defined by requiring that traces for functions in a hyperimmune degree $\ba$ are all bounded by a single order function. 

%

Zambella (see Terwijn \cite{Terwijn:PhD}) observed that if $A$ is low for Martin-L\"of randomness then there is an order function $h$ such that every function computable from $A$ has a c.e.\ $h$-trace. This was improved by Nies \cite{Nies:LownessRandomness}, who showed that one can replace total by partial functions. In some sense it is natural to expect a connection between uniform traceability and lowness notions such as $K$-triviality; if every function computable (or partial computable) from $A$ has a c.e.\ $h$-trace, for some slow-growing order function $h$, then the value $\psi(n)$ of any such function can be described by $\log n + \log h(n)$ many bits.

Following this, it was a natural goal to characterise  $K$-triviality by tracing,  probably with respect to a family of order functions. This problem still remains open. However, an attempt toward a solution lead to the introduction of what seems now a fairly fundamental concept, which is not only interesting in its own right, but now has been shown to have deep connections with randomness.
\begin{definition}[Figuiera, Nies and Stephan \cite{FigueiraNiesStephan:SJT}] \label{def:sjt}
	Let $h$ be an order function. An oracle $A\in 2^\w$ is \emph{$h$-jump-traceable} if every $A$-partial computable function has a c.e.\ $h$-trace. An oracle is \emph{strongly jump-traceable} if it is $h$-jump-traceable for every order function $h$. 
\end{definition}

Figueira, Nies and Stephan gave 
a construction of a non-computable strongly jump-traceable c.e.\ set. Their construction bore a strong 
resemblance to the construction of a $K$-trivial c.e.\ set. J.\ Miller and Nies \cite{MillerNies:Open} asked if strong jump-traceability and $K$-triviality coincided.
Cholak and the authors \cite{CholakDowneyGreenberg:SJT1} showed that the strongly jump-traceable c.e.\ sets form a proper subclass of the c.e.\ $K$-trivial sets. They also showed that the class formed an ideal in the c.e.\ degrees. This ideal was later shown to be $\Pi_4^0$ complete by Ng \cite{Ng:SJT}, giving an alternative proof of the proper containment, as the $K$-trivial c.e.\ sets form a $\Sigma_3^0$ ideal. One of main contributions
of the  paper \cite{CholakDowneyGreenberg:SJT1} was the introduction of new combinatorial tools for dealing with the class of c.e., strongly jump-traceable sets, collectively known as  the ``box-promotion'' technique. 
We remark that recently, in  \cite{DGsjt2}
the authors showed how to adapt this technique to the non-c.e.\ case 
by showing that all strongly jump-traceable sets are $K$-trivial, when there was no
{a priori} reason that they should even be $\Delta_2^0$.

In view of these results it might seem that strong jump-traceability 
might be an interesting artifact of the studies of randomness, but 
as it turned out,
 the class of c.e., strongly jump-traceable sets has been shown to have remarkable connections with randomness. Greenberg, Hirschfeldt, and Nies \cite{GHN} proved that a c.e.\ set is strongly jump-traceable if and only if it is computable from every superlow random sets, if and only if it is computable from every superhigh random set; they related such random sets with the \emph{benign cost functions} which by work of Greenberg and Nies \cite{GreenbergNies:Benign} characterise c.e., strong jump-traceability. Greenberg and Turetsky \cite{GreenbergTuretsky:Demuth} complemented work of Ku{\v c}era and Nies \cite{KuceraNies:Demuth} and showed that a c.e.\ set is strongly jump-traceable if and only if it is computable from a Demuth random set, thus solving the Demuth analogue of the random covering problem, which remains open for Martin-L\"{o}f randomness and $K$-triviality. Other attractive spin-offs in the arena of randomness include Nies's new work on the calculus of cost functions \cite{Ncost}. This material is only just beginning to work itself out and we expect a lot more to grow from these ideas.

\

The goal of the present paper is to use strong jump-traceability to 
solve a longstanding question in classical computability. This question concerns 
what are called {pseudo-jump operators.}

\begin{definition}[Jockusch and Shore \cite{JockuschShore:psudojump1,JockuschShore:pseudojump2}]
A \emph{pseudo-jump operator} is a map $J\colon 2^\w\to 2^\w$ of the form $J(A) = A\oplus W^A_e$.\footnote{Here $W_e^A$ is the $e\tth$ c.e.\ set relativised to $A$, in some fixed list of all c.e.\ 
operators.} A pseudo-jump operator $J$ is \emph{everywhere increasing} if for all $A\in 2^\w$, $J(A)>_\Tur A$. \end{definition}

The most natural pseudo-jump operator is the Turing jump operator, which maps each set $A\in 2^\w$ to $A'$, the halting set relative to $A$. Jockusch and Shore's key insight was that this generalisation of the Turing jump allowed one to give simple constructions of Turing degrees of various prescribed properties. The key concept was \emph{pseudo-jump inversion}.

\begin{theorem}[Jockusch and Shore \cite{JockuschShore:psudojump1}]\label{thm:pseudojump_inversion}
	Let $J$ be a pseudo-jump operator. Then there is some non-computable c.e.\ set $A$ such that $J(A)\equiv_\Tur \cero'$. 
\end{theorem}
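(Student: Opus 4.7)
The plan is a finite-injury priority construction of a c.e.\ set $A$, simultaneously defining a Turing functional $\Gamma$ with $\Gamma^{J(A)} = \cero'$. Three families of requirements are interleaved: standard Friedberg witnesses $P_i$ (ensuring $A \ne \varphi_i$); Sacks-style preservation requirements $S_x$ (restraining $A$ below the use whenever $W_{e,s}^{A_s}(x)\converge$ appears, so that $W_e^A \le_\Tur \cero'$); and coding requirements $C_n$ (enumerating a marker into $A$ at the stage $n$ enters $\cero'$). The priorities alternate as $P_0, S_0, C_0, P_1, S_1, C_1, \ldots$, so that $S_x$ has higher priority than $C_n$ exactly when $x \le n$.

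For the coding, I assign each $n$ a computable sequence $z_{n,0} < z_{n,1} < \cdots$ of potential markers. When $n$ enters $\cero'$ at stage $s$, I enumerate into $A$ the least $z_{n,k}$ lying above every restraint currently imposed by a higher-priority $S_x$ (i.e., for $x \le n$). The functional $\Gamma^{J(A)}(n)$ then reads off $\cero'(n)$ from the $A$-coordinate of $J(A)$: it uses the $W_e^A$-coordinate of $J(A)$ to recover, for each $x \le n$ with $x \in W_e^A$, the use $u_x$ of the true $W_e^A(x)$-computation; it then performs a bounded search over $z_{n,0}, \ldots, z_{n,K}$, where $K$ is chosen so that $z_{n,K}$ exceeds $\max_{x \le n} u_x$, looking for membership in $A$.

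The upper bound $J(A) \le_\Tur \cero'$ will follow from the preservation strategy. Each $S_x$ is injured only by those $C_n$ with $n < x$, and each such $C_n$ acts at most once (when $n$ enters $\cero'$), so the restraint of $S_x$ is injured at most $x$ times and therefore stabilises. Since the construction is uniformly $\cero'$-computable, $\cero'$ can locate the stabilisation stage and read off the final value of $W_e^A(x)$; combined with the automatic $A \le_\Tur \cero'$ this yields $J(A) \le_\Tur \cero'$. The reverse inequality $\cero' \le_\Tur J(A)$ is immediate from the definition of $\Gamma$, and non-computability is delivered by the $P_i$.

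The main obstacle will be verifying that the bound $K$ used in $\Gamma$'s search is genuinely computable from $J(A)$ alone, rather than from $\cero'$. This rests on the preservation of $W_e^A$-computations: since each stabilised restraint's use is recoverable from the $W_e^A$-coordinate of $J(A)$, the functional has enough information to compute $K$ without appealing to $\cero'$ itself. The remainder is a standard finite-injury bookkeeping, which I do not expect to pose essential difficulties, and the entire construction is uniform in the pseudo-jump index $e$.
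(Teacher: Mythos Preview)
First, note that the paper does not itself prove Theorem~\ref{thm:pseudojump_inversion}; it is quoted from Jockusch and Shore as background, so there is no proof in the paper to compare against. Your outline follows the standard Jockusch--Shore architecture (positive requirements for noncomputability, Sacks preservation for $W_e^A\le_\Tur\cero'$, coding markers for $\cero'\le_\Tur J(A)$), and that is the right shape.

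There is, however, a genuine gap in your definition of $\Gamma$. You choose the coding marker $z_{n,k}$ at the moment $n$ enters $\cero'$, respecting the restraints of $S_0,\dots,S_n$ \emph{at that stage}. But those restraints may come from $W_{e,s}^{A_s}(x)$-computations that are later destroyed by some higher-priority $P_i$ (with $i\le x$) or $C_m$ (with $m<x$), after which either a new computation appears with a strictly smaller use, or $x$ drops out of $W_e^A$ altogether. In either case the quantity $\max_{x\le n,\,x\in W_e^A} u_x$ that your $\Gamma$ computes from the \emph{final} uses can be strictly below the restraint that $C_n$ actually obeyed, so your bound $K$ can be too small and the bounded search can miss the marker that was enumerated. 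Your last paragraph correctly flags this as the crux, but ``each stabilised restraint's use is recoverable from $W_e^A$'' does not close the gap: the stabilised restraint need not be the one in force when $C_n$ acted. The standard repair is to make the markers movable: maintain $\gamma_s(n)$ at every stage, lifting it to a fresh large value whenever any $W_{e,s}^{A_s}(x)$ with $x\le n$ changes or anything enters $A$ below $\gamma_s(n)$, and enumerate $\gamma_s(n)$ when $n$ enters $\cero'$. Then $\gamma(n)=\lim_s\gamma_s(n)$ exists by finite injury, and its final position depends only on the final configuration of $A$ and $W_e^A$, so it can be recovered inductively from $J(A)$.
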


The degree $\ba$ of the c.e.\ set $A$ given by Theorem \ref{thm:pseudojump_inversion} is an instance of \emph{inverting} the operator $J$. Roughly, the idea is that $J$ explains how to relativise to any oracle the construction of a c.e.\ set $J(\emptyset)$. Inverting the operator $J$ allows us, up to Turing equivalence, to view the halting problem $\emptyset'$ as the result of the construction $J$, relativised to some c.e.\ oracle. For example, applying pseudo-jump inversion to the Turing jump operator gives a non-computable \emph{low} set, a set whose Turing jump is as simple as possible. In turn, inverting the construction of a low set yields a \emph{high} set, an incomplete c.e.\ set whose Turing jump is $\cero''$ (as high as possible for a c.e.\ set). Jockusch and Shore went on to give simple constructions of c.e.\ degrees in every level of the low$_n$ and high$_n$ hierarchy using pseudo-jump inversion. 
These methods have seen many generalisations, and have extensions to 
randomness, as witnessed by Nies \cite[Theorem 6.3.9]{Nies:Book}, and to $\Pi^0_1$ classes
by Cenzer, LaForte and Wu \cite{CenzerLaforteWu:Pseudojump}, in some sense extending the Jockusch-Soare low basis theorem. 

In spite 
of the usefulness of pseudo-jump operators, there is distinct lack of general theory
concerning them, aside from the original Jockusch-Shore papers. 
Coles, Downey,  Jockusch and LaForte 
\cite{ColesDowneyJockuschLaForte:CompletingPseudojump} studied the general theory of these operators, trying to understand what techniques pseudo-jump inversion would be compatible with.
Such questions were implicit in the Jockusch-Shore papers.
For example,  in \cite{ColesDowneyJockuschLaForte:CompletingPseudojump} it is shown that pseudo-jump inversion is compatible with a Friedberg strategy: for any everywhere increasing
pseudo-jump operator $J$, it 
is possible to construct two Turing incomparable c.e.\ inversions of $J$.  However, the question of whether pseudo-jump inversion is compatible with avoiding upper cones, and even with the construction of a minimal pair, remained open. In \cite{ColesDowneyJockuschLaForte:CompletingPseudojump}, the authors construct a pseudo-jump operator $J$ which is increasing on c.e.\ sets, for which inversion cannot be combined with upper cone avoidance. That is, there is a non-computable c.e.\ set $E$ which is computable from all c.e.\ inversions of $J$. However, there is no reason to believe that this operator $J$ is increasing on all sets, rather than only on the c.e.\ ones. The difficulty of making $J$ increasing globally is similar to the problem of producing a degree-invariant solution for Post's problem. Moreover, this operator $J$ is unnatural, in that it is given by a direct priority construction. Their construction was a $\cero'''$-priority argument.

In this paper we solve this question, by showing that there is an everywhere-increasing pseudo-jump operator for which inversion cannot be combined with upper-cone avoidance. This operator is the relativisation of the construction of a non-computable strongly jump-traceable set from \cite{FigueiraNiesStephan:SJT}. Thus our example is natural. While our construction is combinatorially complex, it does not use the $\cero'''$-priority machinery utilised in \cite{ColesDowneyJockuschLaForte:CompletingPseudojump}, and so is \emph{logically} simpler. 

Relativising this construction, let $J_{\SJT}$ be a pseudo-jump operator, everywhere increasing, such that for all $A\in 2^\w$, $J_{\SJT}(A)$ is strongly jump-traceable relative to $A$. Because every strongly jump-traceable set is ``very low'' (it is $K$-trivial and so superlow), every inversion $A$ of $J_{\SJT}$ must be ``very high'': $\emptyset'$ is $K$-trivial relative to $A$, and so $A$ is superhigh. Inversions of $J_{\SJT}$, namely c.e.\ sets relative to which $\emptyset'$ is strongly jump-traceable, were first studied by Ng in \cite{Ng_very_high}, where they are called \emph{ultrahigh}. 

Nies related notions of lowness, such as superlowness, $K$-triviality and jump-traceability, to so-called \emph{weak reducibilities}. These are partial relativisations of these lowness notions which are made so as to obtain transitive relations on $2^\w$. The best-known weak reducibility $\le_{\LR}$ is obtained by partially relativising lowness for randomness, equivalently lowness for $K$. The weak reducibility corresponding to strong jump-traceability, $\le_{\SJT}$, is obtained by relativising the complexity of the traces, but by preserving the complexity of the bounds on the traces:

\begin{definition}\label{def:SJT_reducibility}
	For $A,B\in 2^\w$, we let $A\le_{\SJT} B$ if for every (computable) order function $h$, every $B$-partial computable function $\psi$ has an $A$-c.e.\ trace bounded by $h$. 
\end{definition}

Akin to other reducibilities, we say that a set $B\in 2^\w$ is \emph{$\SJT$-hard} if $\emptyset'\le_{\SJT} B$. That is, if for every order function $h$, every partial $\Sigma^0_2$ function has an $A$-c.e.\ trace bounded by $h$. Certainly every ultrahigh set is SJT-hard. 

The main theorem of this paper is:

\begin{theorem}\label{thm_main}
	There is a noncomputable c.e.\ set which is computable from every $\SJT$-hard c.e.\ set. 
\end{theorem}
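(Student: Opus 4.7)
The plan is to construct the c.e.\ set $E$ by a Friedberg-style diagonalization, carried out in parallel with the construction of a partial $\emptyset'$-computable function $\psi$ that records enough data about the enumeration history of $E$. Since $\emptyset'\le_{\SJT} B$ whenever $B$ is $\SJT$-hard, and the partial $\emptyset'$-computable functions coincide with the partial $\Sigma^0_2$ functions, every such $B$ will provide a $B$-c.e.\ $h$-trace $T$ for $\psi$ for every computable order function $h$. The construction will be arranged so that, given $B$ and a sufficiently slow-growing $h$, the trace $T$ together with $B$ as an oracle will suffice to compute $E$.

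For noncomputability, I would set up the usual requirements $R_e\colon E\ne W_e$ using (possibly reappointed) followers $x_e$ that are enumerated into $E$ only once $W_e(x_e){\downarrow}=0$. The function $\psi(e)$ would record the stage at which the final follower for $R_e$ enters $E$ (undefined if no follower ever does so); using $\emptyset'$ to detect finality of the follower makes $\psi$ partial $\Sigma^0_2$. Given an $\SJT$-hard $B$ and a chosen computable order $h$, the resulting $B$-c.e.\ $h$-trace $T$ lists at most $h(e)$ candidate stages for $\psi(e)$. To compute $E$ from $B$, I would enumerate $T(e)$ from $B$, and for each candidate stage $s\in T(e)$ consult the computable enumeration of $E_s$ to identify what follower was placed at stage $s$ by $R_e$. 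The construction will ensure that the bounded set of candidates suffices to pin down $E\rest n$ for every $n$, yielding $E\le_\Tur B$.

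The principal obstacle is the tension between two opposing demands: $E$ must change infinitely often to be noncomputable, yet every change must be accommodated inside some $B$-c.e.\ trace of bounded size, and $B$ has no a priori signal telling it when enumeration into a particular $T(e)$ has stabilized. I would resolve this using an elaboration of the box-promotion technique of \cite{CholakDowneyGreenberg:SJT1}: followers are stratified into nested boxes whose sizes are chosen to match the anticipated trace bounds, and accumulated weight in a box triggers promotion of some followers to the next level (with $\psi$ updated accordingly). The result is that only boundedly many followers are simultaneously active at each level, matching the finite trace sizes, while globally $E$ still changes infinitely often. A key feature of this approach, which replaces the $\cero'''$-priority machinery of \cite{ColesDowneyJockuschLaForte:CompletingPseudojump}, is that the guessing about trace growth rates is absorbed into the finite-injury structure of the box hierarchy: once the correct candidate stage appears inside $T(e)$, the box architecture forbids any later action that could alter the relevant bits of $E$, so $B$ need not know when enumeration of $T(e)$ has halted in order to read off $E\rest n$ correctly.
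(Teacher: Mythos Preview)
Your proposal has a genuine gap: it provides no mechanism by which $B$ can certify that a follower $n$ is \emph{not} in $E$. Enumerating the $B$-c.e.\ trace $T(e)$ yields candidate stages, and if $n$ entered $E$ at one of them you can detect $n\in E$; but when $n\notin E$ the search never halts, because $B$ has no signal that $T(e)$ has finished enumerating. Your final paragraph asserts that ``once the correct candidate stage appears inside $T(e)$, the box architecture forbids any later action that could alter the relevant bits of $E$'', but the construction of $E$ cannot see $T$ at all---$T$ depends on the particular $B$, and $E$ must be built uniformly in advance of knowing which SJT-hard set we are reducing to. So there is no way for the box architecture to react to what $T(e)$ contains. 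Your $\psi$ merely records enumeration stages \emph{after the fact}; nothing in the proposal ties enumeration into $E$ to the state of any oracle.

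What the paper does is essentially the reverse. The construction is a \emph{permitting} argument: a follower $x$ may enter $E$ only after every relevant c.e.\ set $W^e$ has changed below an explicitly defined use $v^\tau(x)$. These uses are manufactured from the traces: one defines $\psi^\tau(z)=s$ on a metabox $M^\tau(x)$, waits for $s$ to be enumerated into $T^\tau(z)$ with some $W^e$-use, and declares that use to be $v^\tau(x)$. Box-promotion is then used \emph{proactively} to force $W^e$-changes: changing $\psi^\tau(z)$ repeatedly would overflow the $h$-bound on $|T^\tau(z)|$ unless $W^e$ extracts old values, and extraction is exactly a permission. The reduction $E\le_\Tur W^e$ is then the standard permitting reduction: search for a stage at which $x$ is cancelled, or $x\in E_s$, or $W^e$ is correct below $v^\tau_s(x)$; in the last case $x\notin E$ because it can never again be permitted. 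Handling all SJT-hard c.e.\ sets simultaneously requires a tree of strategies guessing which traces are genuine, with followers seeking permission from every node $\tau$ guessed infinitary along their path. None of this permitting machinery---the uses, the forcing of oracle changes via trace overflow, or the tree---appears in your sketch, and without it the reduction cannot be carried out.
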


Applying the result to ultrahigh sets and so to inversions of $J_\SJT$, we get:

\begin{corollary}\label{cor_pseudojump}
	There is a pseudojump operator $J$, increasing on all sets, which cannot be inverted while avoiding any prescribed upper cone.
\end{corollary}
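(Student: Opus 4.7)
The plan is to exhibit the pseudo-jump operator $J_{\SJT}$ already introduced in the discussion preceding the corollary, and reduce the statement to Theorem~\ref{thm_main}. Fix once and for all an index $e$ witnessing the uniform relativisation of the Figueira--Nies--Stephan construction~\cite{FigueiraNiesStephan:SJT}: for every oracle $A$, $W^A_e$ is c.e.\ in $A$, is not computable from $A$, and is strongly jump-traceable relative to $A$. Set $J_{\SJT}(A) = A \oplus W^A_e$. This is a pseudo-jump operator in the sense of Jockusch--Shore, and it is everywhere increasing since $W^A_e \not\le_\Tur A$ for every $A \in 2^\w$.

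The key observation is that every c.e.\ inversion of $J_{\SJT}$ is $\SJT$-hard. Suppose $A$ is c.e.\ and $J_{\SJT}(A) = A \oplus W^A_e \equiv_\Tur \cero'$. Given a computable order function $h$ and a partial $\Sigma^0_2$ function $\psi$, observe that $\psi$ is partial computable in $\cero' \equiv_\Tur A \oplus W^A_e$. Since $W^A_e$ is strongly jump-traceable relative to $A$, every $(A\oplus W^A_e)$-partial computable function has an $A$-c.e.\ trace bounded by $h$; in particular $\psi$ does. This verifies $\cero' \le_{\SJT} A$, so $A$ is $\SJT$-hard, recapitulating the remark from the introduction that every ultrahigh set is $\SJT$-hard.

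Applying Theorem~\ref{thm_main} to produce a noncomputable c.e.\ set $E$ computable from every $\SJT$-hard c.e.\ set finishes the job: every c.e.\ inversion $A$ of $J_{\SJT}$ is $\SJT$-hard, hence computes $E$, so the upper cone above $E$ is the prescribed cone that no c.e.\ inversion of $J_{\SJT}$ can avoid. The main obstacle here is Theorem~\ref{thm_main} itself; the corollary is essentially a packaging of that theorem in the pseudo-jump framework. The only further verification needed is that the Figueira--Nies--Stephan construction relativises uniformly, preserving non-$A$-computability and relative strong jump-traceability over every oracle $A$, which is a routine relativisation of the original argument.
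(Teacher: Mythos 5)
Your proposal is correct and follows essentially the same route as the paper: let $J_{\SJT}(A) = A \oplus W^A_e$ be the uniform relativisation of the Figueira--Nies--Stephan construction, observe that any c.e.\ inversion $A$ satisfies $\emptyset' \le_{\SJT} A$ (which is exactly the paper's remark that inversions of $J_{\SJT}$ are ultrahigh and hence $\SJT$-hard, since computable order functions are in particular $A$-computable), and apply Theorem~\ref{thm_main} to obtain the unavoidable cone above $E$.
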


A question pursued by several researchers is whether there is a minimal pair of $\LR$-hard c.e.\ degrees. By a relativisation of Nies's results, a c.e.\ degree $\ba$ is $\LR$-hard if and only if $\emptyset'$ is $K$-trivial relative to $\ba$. The interest in LR-hard degrees was sparked by work of Kjos-Hanssen, J.\ Miller and Solomon \cite{K-HMS}, who showed that a Turing degree is LR-hard if and only if it is almost everywhere dominating, a notion suggested by Dobrinen and Simpson \cite{Dobrinen.Simpson:04}. Relativising a result of Nies's mentioned above, we see that there is an order function $h$ such that for any $\LR$-hard c.e.\ degree $\ba$, $\cero'$ is $h$-jump-traceable relative to $\ba$. An examination of the proof of Theorem \ref{thm_main} reveals that in fact, there is an order function $g$ and a non-computable c.e.\ set $E$ which is computable from every c.e.\ set relative to which $\cero'$ is $g$-jump-traceable. If we could make $g$ grow at least as quickly as $h$, we would settle the question in the negative. Currently we know that for $h$, we can take any order function such that $\sum 2^{-h(n)}$ is computable and finite; the proof of Theorem \ref{thm_main} gives an order function $g$ whose growth rate is roughly $\log \log n$. The gap does not seem too large, and at least one of the authors believes that it can be bridged. 

%
%
%
%

As mentioned above, mixing randomness with Turing reducibility has resulted in interesting classes of c.e.\ degrees. The collection of c.e.\ degrees which lie below all $\SJT$-hard c.e.\ degrees seems to be a new ideal of c.e.\ degrees, about which, at this point, we know next to nothing. Further research is definitely needed. This would no doubt require a further refinement of the box-promotion technique, which is first used to deal with highness notions in this paper.

\section{Minimal pairs}

The proof of Theorem \ref{thm_main} is technical. As a warm-up, we prove a weaker result first: that there is no minimal pair of c.e.\ SJT-hard degrees. 

\subsection{Discussion}

Let $A^0$ and $A^1$ be SJT-hard c.e.\ sets. We enumerate a set $E$, which we make noncomputable and reducible to both $A^0$ and $A^1$. The noncomputability requirements are the familiar 
\begin{description}
	\item[$P^e$] $E\ne \vphi^e$,  
\end{description}
where $\seq{\vphi^e}$ is an effective enumeration of all partial computable functions. These requirements are met by the Friedberg-Muchnik strategy: a requirement $P^e$ appoints a \emph{follower} $x$, waits for the follower to be \emph{realised}, which means $\vphi^e(x)\converge =0$, and then wants to enumerate $x$ into $E$. 

Of course, to ensure that $E$ is computable from $A^0$ and from $A^1$, when the requirement $P^e$ appoints $x$, it needs to determine \emph{uses} $u^0$ and $u^1$ for reducing the question ``$x\in E$?'' to $A^0$ and to $A^1$. We are not allowed to enumerate $x$ into $E$ unless \emph{both} $A^0$ and $A^1$ change below $u^0$ and $u^1$ respectively. Moreover, if, for example $A^0\rest{u^0}$ changes at some stage $s$, we would either need to get a change in $A^1\rest{u^1}$ more or less at this stage, and then we would have permission to put $x$ into $E$; or we would need to reset $u^0$ and wait for another $A^0$ change on the new use.
That is, permissions need to be more or less simultaneous. Furthermore, whilst $u^0$ and $u^1$ are chosen to be large when $x$ is appointed, a long time passes between that stage and the stage at which $x$ is realised; of course, before $x$ is realised, we do not want to enumerate it into $E$. Hence, when we want to enumerate $x$ into $E$, the uses $u^0$ and $u^1$ would be relatively small, and so voluntary $A^0$ and $A^1$ changes below their uses are unlikely.

Fortunately, the fact that $A^0$ and $A^1$ are SJT-hard means that they do have to change often; for example, they are both high. The SJT-hardness gives us a mechanism for forcing desirable changes in these sets. SJT-hardness means that for both $i=0,1$, the set $A^i$ can trace 
any $\Sigma_2^0$ partial function $\psi$ by a trace $T^i = \seq{T^i(z)}_{z<\w}$ which is bounded by any prescribed computable order function $h$. That is, if $z\in \dom \psi$, then $\psi(z)\in T^i(z)$, and for all $z$, $|T^i(z)| \le h(z)$. Now a $\Sigma^0_2$ function $\psi$ is the partial limit of a sequence $\seq{\psi_s}_{s<\w}$ of uniformly computable functions; $\psi(z)=d$ if and only if for almost all $s$, $\psi_s(z) = d$. The trace $T^i$ is uniformly c.e.\ in $A^i$, that is, every $d\in T^i(z)$ is enumerated into $T^i(z)$ with some $A^i$-use, which we denote by $u^i(z,d)$. At stage $s$, we let $T^i_s(z)$ be the collection of numbers enumerated into $T^i(z)$ by stage $s$ with the oracle $A^i_s$, the collection of numbers already in $A^i$ at stage $s$. An important point is that if $\psi_s(z)=d$ at some stage $s$, then either $d\in T^i(z)$, or there is some stage $t>s$ such that $\psi_t(z)\ne d$. Thus, if we are defining $\psi$, then when we let $\psi_s(z)=d$, we can commit to never changing the value $\psi_t(z)$ away from $d$ unless we see that $d\in T^i_t(z)$. This implies that for all $s$ and $z$, there is some $t>s$ such that $\psi_s(z)\in T^i_t(z)$. If $d = \psi_s(z)$ is an element of $T^i_t(z)$, with $A^i$-use $u= u^i_t(z,d)$, then at a stage $r>t$, either $d$ is still an element of $T^i_r(z)$, or $A^i_t\rest {u}\ne A^i_r\rest{u}$. In other words, extracting $d$ from $T^i(z)$ requires an $A^i$-change below $u$. In particular, this means that if at stage $t$ (or at a later stage) we change $\psi_t(z)$ to have value different from $d$, then either this new value needs to be enumerated into $T^i(z)$ in addition to the old value $d$, or the old value needs to be extracted from $T^i(z)$ by an $A^i$-change below $u$. If we repeat this process more than $h(z)$ many times, then as $T^i(z)$ cannot contain $h(z)+1$ many distinct possible values for $\psi(z)$, at some cycle we will have forced a change in $A^i$. The smaller the value $h(z)$ is, the fewer cycles we need to go through before we obtain the desired $A^i$-change. 
%
%

The construction below is a balancing act. We will show that if $h$ grows sufficiently slowly, then we will be able to force sufficiently many simultaneous $A^0$- and $A^1$-changes, to get enough simultaneous permissions so that for every requirement $P^e$ there is \emph{some} follower $x$ for $P^e$ which is permitted to enter $E$ (or is never realised). So as described above, we define a $\Sigma^0_2$ partial function $\psi$, by letting $\psi$ be the partial limit of a uniformly computable sequence $\seq{\psi_s}$ of functions. We also define an order function $h$. As we argue below, the recursion theorem will give us, for both $i=0,1$, $A^i$-c.e.\ traces $T^i = \seq{T^i(z)}_{z<\w}$ for $\psi$ which are both bounded by $h$. As described above, we let $T^i_s(z)$ be the stage $s$-approximation for $T^i(z)$, and we assume that for all $s$ and $z$ there is some $t>s$ such that $\psi_s(z)\in T^i_t(z)$. 

The general process for obtaining an $A^i$-permission for some follower $x$ for $P^e$ follows the procedure, described above, of manufacturing an $A^i$-change. We associate some input $z$ with the requirement $P^e$. At some stage $s_0$, we appoint a first follower $x_0$ for $P^e$, and define $\psi_{s_0}(z) = s_0$. We then wait for a stage $t_0>s_0$ at which we see that $s_0$ is enumerated into $T^i(z)$. This enumeration yields an $A^i$-use $u^i_{t_0}(z,s_0)$; we determine that the $A^i$-use $v^i(x_0)$ which is used for the reduction of the question ``$x_0\in E$?'' to $A^i$ equals the use $u^i_{t_0}(z,s_0)$. Now suppose that at some stage $s_1>t_0$, the follower $x_0$ is realised; so for $A^i$-permission for $x_0$, we need to see a future change in $A^i\rest{v^i(x_0)}$. The definition of $v^i(x_0)$ means that if $A^i$ does not give permission to $x_0$ by a stage $r$, then $s_0\in T^i_r(z)$. We then appoint a new follower $x_1$, and redefine $\psi_{s_1}(z)$ to equal $s_1$. We wait for a stage $t_1>s_1$ at which we see that $s_1$ is enumerated into $T^i(z)$. Now there are two possibilities: if $A^i_{t_1}\rest{v^i(x_0)}\ne A^i_{t_0}\rest{v^i(x_0)}$, then $x_0$ receives $A^i$-permission as required. Otherwise, we define the use $v^i(x_1) = u^i_{t_1}(z,s_1)$ for reducing the question ``$x_1\in E$?'' to $A^i$; we wait for another stage $s_2$ at which $x_1$ is realised, and repeat the process: appoint a new follower $x_2$, change $\psi_{s_2}(z)$ to equal $s_2$, find a stage $t_2>s_2$ at which $s_2$ is enumerated into $T^i(z)$. At stage $t_2$, if neither $x_0$ nor $x_1$ are permitted by $A^i$, then we appoint a new follower and repeat the cycle. As we mentioned above, this process cannot repeat more than $h(z)+1$ many times, and so eventually, either we appoint a follower which never gets realised, or some realised follower is permitted by $A^i$. 

The reader may ask: after all, we define $h$. Why do we not just define $h(z) = 1$? In this case, we do not need to appoint more than one follower, as the first follower appointed is guaranteed to receive permission. There are two reasons. First, while we define $h$, the recursion theorem levies an ``overhead'', which means that it gives us a constant $c\ge 1$ such that $T^i$ is only bounded by $\max\{c,h\}$ and not by $h$. The second reason is more important. While we define $h$, we need to ensure that $h$ is unbounded. That means that for any value $k$, we may only define $h(z)=k$ for finitely many inputs $z$. Fixing $z$, if $x$ is a follower for $P^e$ which is never realised, we will not need permission from $A^i$ for $x$, but we also need to ensure that the use of reducing $x\in E$ to $A^i$ is bounded. If we let other followers, for other requirements, access $z$ by changing $\psi(z)$ and thus the $A^i$-use of enumerating $\psi(z)$ into $T^i(z)$, then the use of reducing $x\in E$ to $A^i$, which has to be tied to this use (to enable permission if $x$ does get realised), will increase each time $\psi(z)$ is changed. This should not happen infinitely often. And so, for yet unrealised followers, different requirements $P^e$ need to access different inputs $z$, and so the values $h(z)$ for the inputs $z$ cannot be bounded. 

\medskip

The discussion so far shows how to obtain permission from a single set $A^0$ or $A^1$, and so would suffice if we wanted to show that both sets are not computable. However, to enumerate a follower $x$ into $E$, we need permission from both sets $A^0$ and $A^1$. If we did have ``1-boxes'', namely access to inputs $z$ such that $|T^i(z)|$ is known to be bounded by 1, then we could run the process above on both sides: appoint a follower $x$ at a stage $s$; define $\psi_s(z)=s$; for both $i<2$, tie the $A^i$-use of $x$ to the $A^i$-use $v^i(x) = u^i_{t}(z,s)$ of enumerating $s$ into $T^i(z)$ at some stage $t>s$; if $x$ is realised at some $s'>t$, redefine $\psi(z)=s'$, get permission for $x$ from both $A^0$ and $A^1$, and enumerate $x$ into $E$. The point here is that 
if we have 1-boxes then the \emph{only} way a new value can enter the box is that the relevant $A^i$ changes on the use.
However, we may not have 1-boxes. If $h(z)>1$, and we attempt to run the process on both sides, we may be faced with the following situation
which is caused by certain \emph{timing} considerations. At stage $s_0$, we appoint the follower $x_0$ and define $\psi_{s_0}(z)=s_0$;  at stage $t_0>s_0$, we associate with $x_0$ the $A^i$-uses $v^i(x_0) = u^i_{t_0}(z,s_0)$. At stage $s_1>t_0$, $x_0$ is realised; we appoint a new follower $x_1$, set $\psi_{s_1}(z)=s_1$, and wait for $s_1$ to be enumerated into both $T^0(z)$ and $T^1(z)$ at a stage $t_1>s_1$. At stage $t_1$, we see that $A^0$ permits $x_0$, but $A^1$ does not. We are not allowed to enumerate $x$ into $E$ at stage $t_1$. But we are also not allowed to \emph{keep the $A^0$-permission for $x$ open}: at stage $t_1$, we must associate $x_0$ with a new $A^0$-use, or we will not be able to compute the answer to the question ``$x\in E$?'' from $A^0$ alone. We can do this; we redefine $u^0(x_0) = u^0(x_1) = u^0_{t_1}(z,s_1)$. Say that at a later stage $s_2$, $x_1$ is realised as well. Suppose, for simplicity, that $h(z)=2$. Thus if we change $\psi(z)$ at stage $s_2$, then we are guaranteed that $A^1$ permits either $x_0$ or $x_1$ at some $t_2>s_2$. But the $A^0$-permission for $x_0$ at stage $t_1$ means that $s_0$ is extracted from $T^0(z)$, and so $T^0_{t_1}(z)$ might contain but one element, $s_1$. In this case, $A^0$ may refuse to permit either $x_0$ or $x_1$ at stage $t_2$. The permission that $A^1$ gives at stage $t_2$, though, gives $A^1$ the opportunity to empty $T^1(z)$ and leave it with one element, $s_2$. This strategy allows $A^0$ and $A^1$ together to keep see-sawing, always giving permission on one side, and denying it on the other. 
Hence whilst the opponent gives permissions on both sides, because they are 
out of phase, we cannot use them for enumeration of $x_0$, or indeed any other follower, into $E$.

In fact, if the growth rate of the order $h$ is $2^n$, then we know that
it is possible to construct a minimal pair of c.e.\ sets $B^1,B^2$,
which are superhigh with truth table bound  $2^n$. (See for example
Ng's \cite{Ng_very_high}). Thus somehow we will need to have a more slowly growing 
$h$, and this is ``dually'' similar to the problems in showing that
the strongly jump-traceable c.e.\ sets form an ideal, as shown in
Cholak, Downey and Greenberg \cite{CholakDowneyGreenberg:SJT1}.

We need to break the symmetry between $A^0$ and $A^1$. Using a 
slowly growing order function $h$, this is done by an inductive process known as ``box promotion''. This promotion has already been exploited. If some realised follower $x$ is tied to an $A^i$-box $\{z\}$ via some value $s\in T^i(z)$ (and use $v^i(x)=u_t^i(z,s)$), we change $\psi(z)$ away from $s$, and $A^i\rest{u}$ does not change, then the $h(z)$-box $\{z\}$ has been \emph{promoted} to being an $h(z)-1$-box. Other followers, weaker than $x$, can use the box, imagining that it only has $h(z)-1$ many slots left to fill. This promotion can be reversed only if $A^i$ relents and permits $x$. 

We sketch how this works. Suppose, as first approximation, that the constant $c$ supplied by the recursion theorem equals 1, that is, we have access to 1-boxes. Since we only have finitely many 1-boxes, we consider a requirement $P^e$ which initially only has access to 2-boxes. Suppose that $x$ is a realised follower for $P^e$; at some stage $s$, the $A^i$-uses are $v^i(x) = u^i_t(z_i,t)$, where $\{z_0\}$ and $\{z_1\}$ are 2-boxes. We ask for $A_1$-permission for $x$ by changing $\psi(z_1)$ away from $t$. While permission is denied, $\{z_1\}$ is effectively a 1-box, and other followers may use it to define uses and obtain permission from $A_1$. Since we do not want to drive the use $v^0(x)$ to infinity, other followers, for the time being, are not allowed to change $\psi(z_0)$. Once $A^1$-permission is received, we move the $A^1,x$-pointer from $z_1$ to a 1-box $\{w\}$, and obtain a new version of $v^1(x)$, based on the use of enumerating a new value $\psi(w)$ into $T^1(w)$. We then ask for $A^0$-permission for $x$, making the 2-box $\{z_0\}$ ``active'': while $A^0$-permission for $x$ is denied, $\{z_0\}$ is effectively a 2-box for other followers, who may now change $\psi(z_0)$ with impunity, defining uses based on $z_0$ and obtaining permissions from $A^0$. Once $A^0$-permission for $x$ is obtained, we can change $\psi(w)$ and get a guaranteed $A^1$-permission for $x$ as well, and enumerate $x$ into $E$. See Figures \ref{fig1} and \ref{fig2}.

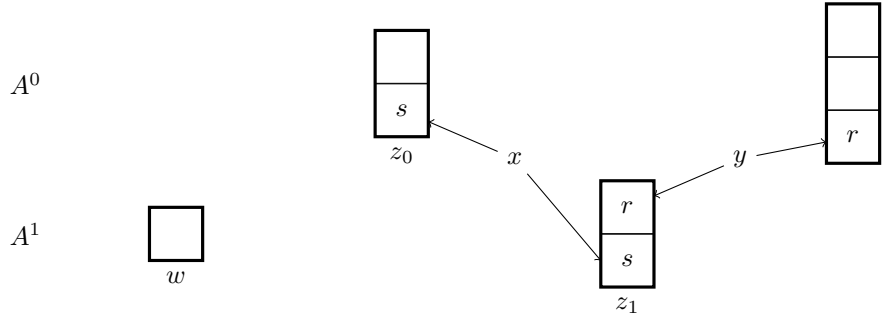
\begin{figure}[h]

	\begin{center}
		\begin{tikzpicture}
			[box/.style = {draw,very thick,minimum size=7mm,nodes={draw,very thin},inner sep=0mm}]
			
			\matrix[box] at (9,2) { \node {};  \\ \node {};  \\ \node (r_0) {$r$}; \\ };			
			\matrix[box,label=below:$z_0$] (z_0) at (3,2) { \node {};  \\ \node (s_0) {$s$}; \\ };
			\matrix[box,label=below:$z_1$] (z_1) at (6,0) { \node (r_1) {$r$};  \\ \node (s_1) {$s$}; \\ };
			\matrix[box,label=below:$w$] (w)   at (0,0) { \node {}; \\ };
			
			\node at (-2,0) {$A^1$};
			\node at (-2,2) {$A^0$};
			
			\node at (4.5,1) {$x$} 
				edge [->] (s_1.west)
				edge [->] (s_0);

			\node at (7.5,1) {$y$}
				edge [->] (r_0)
				edge [->] (r_1);
			
		\end{tikzpicture}
	\end{center}

	\caption{The follower $x$ is waiting for an $A^1$-permission. Meanwhile, follower $y$ can treat $\{z_1\}$ as a 1-box.}
	\label{fig1}
\end{figure}

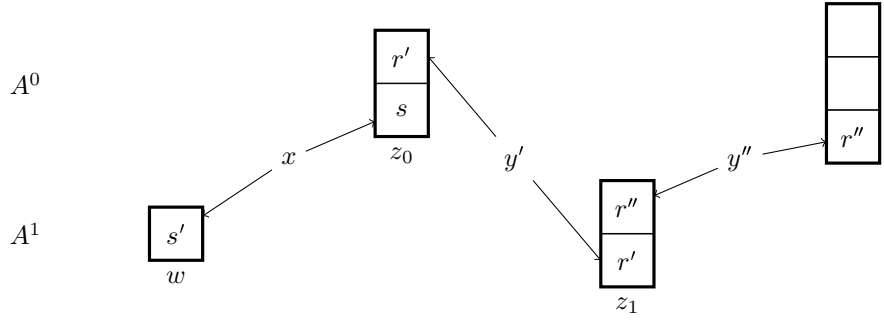
\begin{figure}[h]

	\begin{center}
		\begin{tikzpicture}
			[box/.style = {draw,very thick,minimum size=7mm,nodes={draw,very thin},inner sep=0mm}]
			
			\matrix[box,label=below:$w$] (w)   at (0,0) { \node (s_1) {$s'$}; \\ };
			\matrix[box,label=below:$z_0$] (z_0) at (3,2) { \node (r_0) {$r'$};  \\ \node (s_0) {$s$}; \\ };
			\matrix[box,label=below:$z_1$] (z_1) at (6,0) { \node (r'_1) {$r''$};  \\ \node (r_1) {$r'$}; \\};
			\matrix[box] at (9,2) { \node {};  \\ \node {};  \\ \node (r'_0) {$r''$}; \\ };			
						
			\node at (-2,0) {$A^1$};
			\node at (-2,2) {$A^0$};
			
			\node at (1.5,1) {$x$} 
				edge [->] (s_1)
				edge [->] (s_0);

			\node at (4.5,1) {$y'$}
				edge [->] (r_0.east)
				edge [->] (r_1.west);

			\node at (7.5,1) {$y''$}
				edge [->] (r'_0)
				edge [->] (r'_1);
			
		\end{tikzpicture}
	\end{center}

	\caption{Follower $x$ received permission from $A^1$. Its $A^1$-pointer moved to $w$. It is now waiting for $A^0$-permission. Meanwhile, follower $y'$ is treating $\{z_0\}$ as a 1-box, and follower $y''$ is treating $\{z_1\}$ as a 1-box.}
	\label{fig2}
\end{figure}

Now, an examination of what happens further to the right explains why we need to use \emph{metaboxes}. A metabox is an aggregate of boxes of some fixed size. It is used to amplify the promotion given by a refusal of some set to give permission. Suppose again, for example, that both $A^0$- and $A^1$-uses of a follower $x$ are associated with 2-boxes $\{z_0\}$ and $\{z_1\}$ as above. While waiting for $A^1$-permission, other followers treat $\{z_1\}$ as a 1-box. Such followers point to $\{z_1\}$ and to an $A^0-3$-box $\{z_2\}$. We need at least two such followers, $y_0$ and $y_1$, to turn $\{z_2\}$ into a 1-box. However, while waiting for permission from $A^0$, the $A^1$-use of $y_0$ and $y_1$, which is tied to $\{z_1\}$, should not change. This means that $y_0$ and $y_1$ should not be sharing $z_1$; they should have different versions of $z_1$, each for its own use. [The reader may ask: if there are only two followers, $y_0$ and $y_1$, then not much harm will come from changing, say, $y_0$'s use when $y_1$ acts, as $y_1$ will act only once? But it is possible that $y_0$ is fixed, but infinitely many followers playing the role of $y_1$ come and go, each acting once. This is why $y_0$ needs a box of its own on the $A^1$ side.] The solution is to tie $x$'s use to to a single input $z_1$, but to a metabox $M$ of inputs. When the use is set up, at stage $s$ say, we define $\psi(z)=s$ for all $z\in M$. We wait for $s$ to show up in $T^1(z)$ for all $z\in M$. We then let the use $v^1(x)$ be the maximum of the uses $u^1(z,s)$ of enumerating $s$ into $T^1(z)$ for $z\in M$. While $A^1$ does not permit $x$, every $z\in M$ can be treated as a 1-box. This means that we can then split $M$ into two, one part tied to $y_0$ and another to $y_1$. See Figure \ref{fig_metabox}.

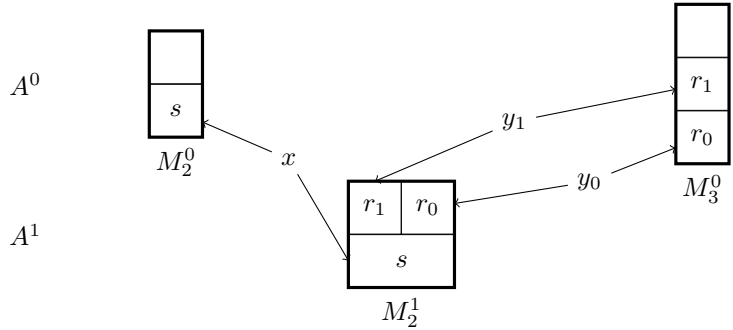
\begin{figure}[h]

	\begin{center}
		\begin{tikzpicture}
			[box/.style = {draw,very thick,minimum size=7mm,nodes={draw,very thin},inner sep=0mm},
			bbox/.style = {draw,very thin,minimum size=7mm,inner sep=0mm,on grid}]

			\matrix[box,label=below:$M^0_2$] (M_02) at (0,2) { \node{}; \\ \node (s_0) {$s$};  \\ };
			
			\node[bbox,anchor=south east] (r_10) at (3,0) {$r_1$};
			\node[bbox,right=7mm of r_10] (r_11) {$r_0$};
			\node[bbox,below=0mm of r_10.south east,minimum width=14mm] (s_1) {$s$};
			\node[draw,very thick,fit=(r_10) (r_11) (s_1),inner sep=-0mm,label=below:$M^1_2$] {};

			\matrix[box,label=below:$M^0_3$] at (7,2) { 
				\node {};  \\ \node (r_00) {$r_1$};  \\ \node (r_01) {$r_0$}; \\ };			
						
			\node at (-2,0) {$A^1$};
			\node at (-2,2) {$A^0$};
			
			\node at (1.5,1) {$x$} 
				edge [->] (s_1.west)
				edge [->] (s_0);

			\node at (4.5,1.5) {$y_1$}
				edge [->] (r_00)
				edge [->] (r_10.north);
					
			\node at (5.5,0.7) {$y_0$}
				edge [->] (r_01)
				edge [->] (r_11);
								
		\end{tikzpicture}
	\end{center}

	\caption{The metabox $M^1_2$ is promoted while $x$ is waiting for $A^1$-permission. It is split between $y_0$ and $y_1$; while both $y_0$ and $y_1$ wait for $A^0$-permission, other followers can treat $M^0_3$ as a 1-box and get immediate permission when required.}
	\label{fig_metabox}
\end{figure}

Of course, we need to apply this reasoning to every level: thinking toward the 4-boxes, we may need to split the collection of 3-boxes into at least three disjoint parts. But it is not sufficient to have only $k$ many $k$-boxes.\footnote{We remark at this point that it would be \emph{extremely nice} to have a version of this construction requiring only $k$ many $k$-boxes. As indicated in the introduction, we believe that this would lead to a proof that there is no minimal pair of LR-hard c.e.\ sets.} Consider again the case of 2-boxes. Even before any promotions are made to 2-boxes, we may need disjoint 2-boxes (on the $A^1$ side) for two followers $y_0$ and $y_1$ which are waiting for permission from $A^0$ on 3-boxes. Either one of these may need to be split up in the future: at most one, but we cannot tell in advance which one. Suppose that $y_0$ is stronger than $y_1$. If $y_0$ gets permission from $A^0$, and its $A^0$ pointer moves from $M^0_3$ to $M^0_2$, then $y_1$ gets cancelled, the part of $M^1_2$ which was pointed to by $y_0$ gets promoted, and it will be split up between future followers pointing at $M^1_2$ and $M^0_3$. The part of $M^1_2$ which was pointed to by $y_1$ is not promoted, and will not be used until $y_0$ is cancelled or moved. If $y_1$, but not $y_0$, gets permission, then $y_0$ is not cancelled, but the part of $M^1_2$ to which it is pointed is not promoted, and will not be used by other followers; a future follower will point to sub-boxes of the metabox pointed to by $y_1$. See Figures \ref{fig3}, \ref{fig4}, and \ref{fig5}. So we need at least three 2-boxes, and in general about $k^k$-many $k$-boxes.

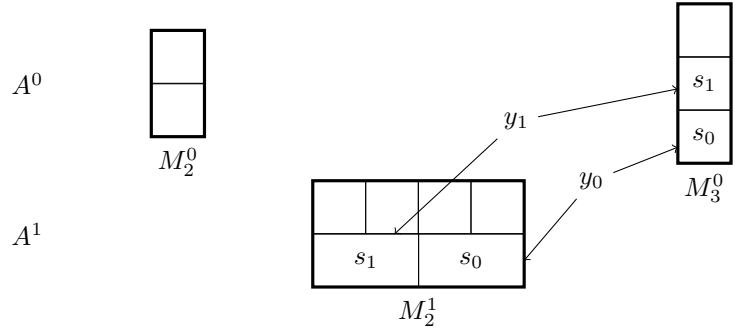
\begin{figure}[h]

	\begin{center}
		\begin{tikzpicture}
			[box/.style = {draw,very thick,minimum size=7mm,nodes={draw,very thin},inner sep=0mm},
			bbox/.style = {draw,very thin,minimum size=7mm,inner sep=0mm,on grid}]

			\matrix[box,label=below:$M^0_2$] (M_02) at (0,2) { \node{}; \\ \node{};  \\ };
			
			\node[bbox,anchor=south east] (r_10) at (2.5,0) {};
			\node[bbox,right=7mm of r_10] (r_11) {};
			\node[bbox,right=7mm of r_11] (r_00) {};
			\node[bbox,right=7mm of r_00] (r_01) {};
			\node[bbox,below=0mm of r_10.south east,minimum width=14mm] (s_1) {$s_1$};
			\node[bbox,below=0mm of r_00.south east,minimum width=14mm] (s_0) {$s_0$};
			\node[draw,very thick,fit=(r_10) (r_11) (s_1) (s_0) (r_00) (r_11),inner sep=-0mm,label=below:$M^1_2$] {};

			\matrix[box,label=below:$M^0_3$] at (7,2) { 
				\node {};  \\ \node (s_0') {$s_1$};  \\ \node (s_1') {$s_0$}; \\ };			
						
			\node at (-2,0) {$A^1$};
			\node at (-2,2) {$A^0$};
			
			\node at (4.5,1.5) {$y_1$}
				edge [->] (s_1)
				edge [->] (s_0');
					
			\node at (5.5,0.7) {$y_0$}
				edge [->] (s_0.east)
				edge [->] (s_1');
								
		\end{tikzpicture}
	\end{center}
	\caption{Both $y_0$ and $y_1$ seek permission from $A^0$. We do not know which, if either, will be permitted.}
	\label{fig3}
\end{figure}

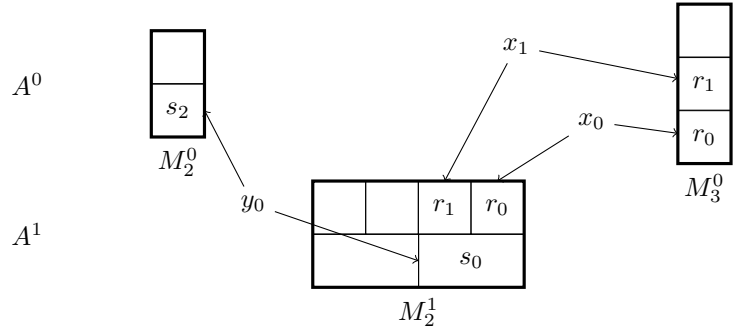
\begin{figure}[h]

	\begin{center}
		\begin{tikzpicture}
			[box/.style = {draw,very thick,minimum size=7mm,nodes={draw,very thin},inner sep=0mm},
			bbox/.style = {draw,very thin,minimum size=7mm,inner sep=0mm,on grid}]

			\matrix[box,label=below:$M^0_2$] (M_02) at (0,2) { \node{}; \\ \node (s_2) {$s_2$};  \\ };
			
			\node[bbox,anchor=south east] (r_10) at (2.5,0) {};
			\node[bbox,right=7mm of r_10] (r_11) {};
			\node[bbox,right=7mm of r_11] (r_00) {$r_1$};
			\node[bbox,right=7mm of r_00] (r_01) {$r_0$};
			\node[bbox,below=0mm of r_10.south east,minimum width=14mm] (s_1) {};
			\node[bbox,below=0mm of r_00.south east,minimum width=14mm] (s_0) {$s_0$};
			\node[draw,very thick,fit=(r_10) (r_11) (s_1) (s_0) (r_00) (r_11),inner sep=-0mm,label=below:$M^1_2$] {};

			\matrix[box,label=below:$M^0_3$] at (7,2) { 
				\node {};  \\ \node (s_0') {$r_1$};  \\ \node (s_1') {$r_0$}; \\ };			
						
			\node at (-2,0) {$A^1$};
			\node at (-2,2) {$A^0$};
			
			\node at (1,0.4) {$y_0$}
				edge [->] (s_0.west)
				edge [->] (s_2.east);
					
			\node at (4.5,2.5) {$x_1$}
				edge [->] (r_00.north)
				edge [->] (s_0');

			\node at (5.5,1.5) {$x_0$}
				edge [->] (r_01.north)
				edge [->] (s_1');
								
		\end{tikzpicture}
	\end{center}
	\caption{The follower $y_0$ was permitted. Followers $x_0$ and $x_1$ use $y_0$'s boxes. $y_1$ is cancelled.}
	\label{fig4}
\end{figure}

\begin{figure}[h]

	\begin{center}
		\begin{tikzpicture}
			[box/.style = {draw,very thick,minimum size=7mm,nodes={draw,very thin},inner sep=0mm},
			bbox/.style = {draw,very thin,minimum size=7mm,inner sep=0mm,on grid}]

			\matrix[box,label=below:$M^0_2$] (M_02) at (0,2) { \node{}; \\ \node (s_2) {$s_2$};  \\ };
			
			\node[bbox,anchor=south east] (r_10) at (2.5,0) {};
			\node[bbox,right=7mm of r_10] (r_11) {$r$};
			\node[bbox,right=7mm of r_11] (r_00) {};
			\node[bbox,right=7mm of r_00] (r_01) {};
			\node[bbox,below=0mm of r_10.south east,minimum width=14mm] (s_1) {$s_1$};
			\node[bbox,below=0mm of r_00.south east,minimum width=14mm] (s_0) {$s_0$};
			\node[draw,very thick,fit=(r_10) (r_11) (s_1) (s_0) (r_00) (r_11),inner sep=-0mm,label=below:$M^1_2$] {};

			\matrix[box,label=below:$M^0_3$] at (7,2) { 
				\node {};  \\ \node (s_0') {$r$};  \\ \node (s_1') {$s_0$}; \\ };			
						
			\node at (-2,0) {$A^1$};
			\node at (-2,2) {$A^0$};
			
			\node at (1,0.4) {$y_1$}
				edge [->] (s_1.west)
				edge [->] (s_2.east);
					
			\node at (4,2) {$x$}
				edge [->] (r_11.north)
				edge [->] (s_0');
				
			\node at (5.5,0.7) {$y_0$}
				edge [->] (s_0.east)
				edge [->] (s_1');
								
		\end{tikzpicture}
	\end{center}
	\caption{In an alternate reality, $y_1$ was permitted. A new follower $x$ uses $y_1$'s boxes.}
	\label{fig5}
\end{figure}
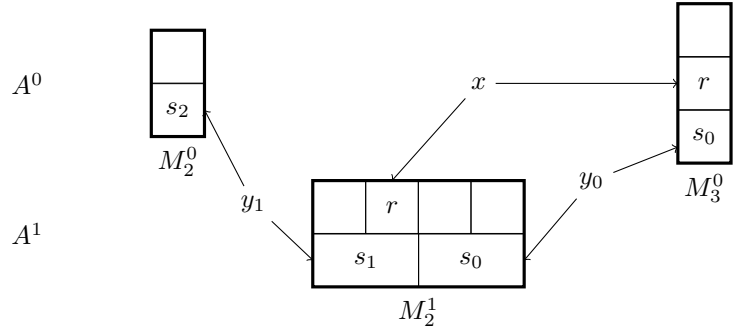
%
%
%
%
%
%
%
%
%

We see the general structure of the metaboxes $M^i_k$ for $i<2$ and $k<\w$. Consider for example $i=1$. A follower $x$, pointing at $M^1_k$, also points at either $M^0_k$ or $M^0_{k+1}$. In the first case, it is asking for permission from $A^1$; in the second, from $A^0$. For such a follower $x$, let $M^1(x)$ be the sub-box of $M^1_k$ which is used by $x$. If $x$ and $y$ point to $M^1_k$ and $M^0_{k+1}$ (we write $0=\top(x)=\top(y)$ to indicate that $x$ and $y$ are waiting for permission from $A^0$), then we require that $M^1(x)$ and $M^1(y)$ are disjoint, so that the $A^1$-use of $x$, in case it never moves again, does  not go to infinity. If $x$ and $y$ point to $M^1_k$ and $M^0_k$ (i.e.\ $1=\top(x)= \top(y)$) and $x$ is stronger than $y$ then we require that $M^1(y)\subseteq M^1(x)$, and that $t^1(x)<t^1(y)$. Here $t^1(x)$ is the stage at which $x$ was last moved, which is the same as the value of $\psi(z)$ which is enumerated into $T^1(z)$ for $z\in M^1_k(x)$, the uses $u^1(z,t^1(x))$ of said enumerations determining the use connecting $x\in E$ to $A^1$. The conditions $M^1(y)\subseteq M^1(x)$ and $t^1(x)<t^1(y)$, together with the bound $k$ on the size of $T^1(z)$ for $z\in M^1_k$, ensure that there are at most $k$ followers $x$ with $\top(x)=1$ pointing at $M^1_k$. This allows us to calculate how large $M^0_k$ needs to be (as we mentioned, about $k^k$), and so define the order function $h$ ahead of time. The metabox $M^1_k$ is structured as a tree of sub-boxes, ordered by reverse inclusion. A stem consists of the boxes $M^1(x)$ for followers $x$ with $\top(x)=1$; each element $M^1(x)$ of the stem is divided into disjoint sub-boxes $M^1(y)$, where for all but at most one $y$, we have $\top(y)=0$. 

For $i=0$ instead of 1, the situation is similar; if $x$ points at $M^0_k$ (we write $k=k^0(x)$), then $\top(x)=1$ if it also points at $M^1_k$, and $\top(x)=0$ if it also points at $M^1_{k-1}$. A typical walk by a follower $x$ starts with $M^1_k$ and $M^0_k$ for some $k$ (so $\top(x)$ begins with 1); an $A^1$-permission moves $x$ to point at $M^0_k$ and $M^1_{k-1}$ (moving $\top(x)$ to 0); then an $A^0$-permission moves $x$ to point at $M^1_{k-1}$ and $M^0_{k-1}$ (moving $\top(x)$ back to 1), and so on, until at some point, one of the boxes it points to is a (possibly promoted) 1-box, which allows dual permission and enumeration into $E$. 

\medskip

This structure explains why we need to order the followers by priority, and cancel weak followers whenever a stronger follower moves. If $x$ is stronger than $y$, then for both $i<2$, the use $v^i(x)$ for reducing $x\in E$ to $A_i$ is smaller than the corresponding use $v^i(y)$. If $x$ receives permission from say  $A^0$ at some stage, then the $A^0$-change below $v^0(y)$ invalidates any promotion of an $A^0$-box credited to $y$. On the other hand, on the $A^1$-side, the boxes $M^1(x)$ and $M^1(y)$ are disjoint; if $y$ were not cancelled, we would have to move $y$ to also require permission from $A^1$, violating the requirement that $M^1(x)$ and $M^1(y)$ are comparable in this case. And again we emphasise, that without this comparability, the whole mechanism, of using the bound on the size of the traces to bound the number of boxes that we need, is thrown out of whack. 

\

To complete the discussion, there are three further issues we need to address. The first is simple: we notice that followers for different requirements need to all work in concert in the grand scheme of movement and promotion. That is, we cannot, for different requirements $P^e$, set up separate boxes, only devoted to followers of $P^e$. Again, this is because for each $k$, we can overall have only finitely many $k$-boxes, as $h$ has to be unbounded. The \emph{cursus honorum} of followers for $P^e$ may start with $e$-boxes, say, but may require advancement all the way down to 1-boxes. The effort is combined; followers for a requirement $P^e$ will make use of promotions credited to followers of stronger requirements. The important thing is that even though some followers get stuck, overall there are no losses but only collective gains. 

\medskip

The discussion above only referred to realised followers. Before a follower $x$ is realised, it needs to point at boxes $M^i_k$ for both $i<2$, but permissions from $A^1$ where say $1 = \top(x)$ are not useful, as we do not want to enumerate $x$ into $E$ before it is realised. So $x$ cannot play a full part in the global promotion process. This is why we need to allocate to it a \emph{private} $A^1$-box, disjoint from the general structure of sub-boxes of $M^1_k$. On the other hand, when $x$ is realised and receives its first $A^1$-permission, it needs to join the global effort, while of course not moving its $A^0$-pointer. This is why the $A^0$-boxes associated with an unrealised follower are not private, but part of the general structure of boxes. 

\medskip

Finally, the impatient reader has likely already been wondering for a while, what if we do not have any 1-boxes? That is, what happens if the constant $c$ supplied by the recursion theorem is greater than 1? This situation seems to invalidate the entire plan, since it loses its basis of 1-boxes: if we have none, then there is no mechanism for eventually enumerating any followers into $E$. 

The solution is to introduce nonuniformity to the process of reducing $E$ to $A^1$. Suppose that a follower $x$ has been promoted as much as it could: it points to the smallest $A^1$-box $M^1_c$, and to the smallest $A^0$-box $M^0_c$, and $\top(x)=1$. It then receives $A^1$-permission, but we cannot guarantee $A^0$-permission, since $c>1$. We then, contrary to everything we said so far, leave the $A^1$-permission open: we do not appoint a new $A^1$-use, and leave $x$ to point only at $M^0_c$, and not at any $A^1$-box. The situation of $x$ in the scheme of sub-boxes of $M^0_c$ is as if it were pointing at $M^1_{c-1}$, if that metabox existed. So the boxes $M^0(x)$ of such followers are nested, and so there are at most $c$ of them. If $x$ is then permitted by $A^0$, we enumerate it into $E$; otherwise we do not. We then see that the promotion structure implies that there can be at most $c$ followers for which the $A^1$-permission is left open but are not later cancelled or enumerated into $E$. Thus $A^1$, upon leaving a permission open for $x$, searches for a later stage at which $x$ is cancelled or enumerated into $E$, and thus can find out if $x$ is in $E$ or not. This search will halt for all but finitely many, indeed at most $c$ many, followers. 

We note that the nonuniformity we introduced is limited to the reduction of $E$ to $A^1$. Namely, an index for $E$ is obtained effectively from indices for $A^0$ and $A^1$ (and for $A^i$-traces for a universal $\Sigma^0_2$ partial function), and also, an index for a reduction of $E$ to $A^0$ is effectively obtained. This ``near uniformity'' of the construction turns out to be important when we prove Theorem \ref{thm_main}, when we try to generalise the construction we discussed to deal with infinitely many oracles. Looking ahead, as an exercise, the reader is invited to try to show that if $A^0$, $A^1$ and $A^2$ are SJT-hard c.e.\ sets, then there is a noncomputable c.e.\ set $E$ reducible to each of $A^0$, $A^1$ and~$A^2$.

\subsection{Construction}

We can now give the details of the construction. For tidiness, we define two partial $\Sigma^0_2$ functions, $\psi^0$ and $\psi^1$, by defining uniformly computable sequences $\seq{\psi^i_s}$ of total functions, and let $\psi^i$ be the partial limit of the sequence $\seq{\psi^i_s}$. For both $i<2$, we let $\psi_0(n)=0$ for all $n$. At stage $s$ we may define $\psi^i_s(z)=s$ for some $z$'s; for all other $z$'s, we let $\psi^i_s(z)= \psi^i_{s-1}(z)$. We let $\psi^i(z) = \lim_s \psi^i_s(z)$ if the limit exists; otherwise $\psi^i(z)\diverge$. 

As argued, for example, in \cite{CholakDowneyGreenberg:SJT1}, the recursion theorem provides us with a constant $c\ge 1$, and for both $i<2$, an $A_i$-c.e.\ traces $T^i$ which traces $\psi^i$. These traces are both bounded by an order function $h$, which we define; we may use the constant $c$ in the definition, but we need to ensure that the definition of $h$ is uniform in $c$, and that $h(0)\ge c$. 

For $k\ge c$, we define $I(k)$, which are finite intervals of natural numbers. These are successive intervals, so to define these intervals, it suffices to determine their size, which we set to $|I(k)| = 1 + (2k+2)^{k+1}$. This, in turn, determines $h$, because we define $h(z)=k$ for all $z\in I(k)$. 

Let $i<2$. We let $\seq{A^i_s}$ be an effective enumeration of $A^i$. For $z<s$, we let $T^i_{s}(z)$ be the collection of numbers enumerated into $T^i(z)$ with oracle $A^{i}_s$. By delaying enumeration of elements into $T^i_s(z)$, we may assume that for all $s$, $|T^i_s(z)| \le h(z)$. 

For $t\in T^i_{s}(z)$, we let $u^i_s(z,t)$ be the $A^i_s$-use of enumerating $t$ into $T^i_{s}(z)$. Similarly, for $t\in T^i(z)$ we let $u^i(z,t)$ be the $A^i$-use of enumerating $t$ into $T^i(z)$.

Now by the fact that $T^i$ traces $\psi$, we may assume that for all $s$, there is some $t>s$ such that for all $z<s$, for both $i<2$, $\psi^i_s(z)\in T^i_{t}(z)$. To ensure this, when we redefine $\psi^i_s(z)$, we search for a stage $t>s$ as is sought after. While we search for $t$, we hold the definition of $\psi^i$, so that if such a stage is not found, we have $\psi^i = \psi^i_s$. But then we have a contradiction to the fact that $T^i$ traces $\psi^i$.

There are two options for handling this fact. We could speed up the enumeration of the sets $A^0$ and $A^1$ and assume that the stage $t$ provided by the claim equals $s$. This necessitates a cascading effect: a typical response for finding $s\in T^i_s(z)$, which likely involves an $A^i$-change, is to redefine $\psi^i_s(z')=s$ for more values $z'$, and so a further speed-up of $A^0$ and $A^1$ so that $s\in T^i_{s}(z')$ for these inputs $z'$, and so on. We can argue that at each stage, this process repeats only finitely many times. 

We do not take this approach. Mostly, this is because we want to present a construction which is close to the full construction proving Theorem \ref{thm_main}. In that construction, we work with all c.e.\ oracles, not only with two c.e.\ oracles $A^0$ and $A^1$ which are guaranteed to be SJT-hard. So in the full construction, we need to guess which traces, if any, trace the functions $\psi^i$ we build; for some oracles no guess will be correct. In other words, in the full construction, the fact above is only guaranteed to hold for oracles which are SJT-hard, and only for correct guesses of their trace. Hence in the full construction, we cannot speed up all sets and get instant gratification. We have to restrict ourselves to stages at which our guesses seem correct. 

We apply a similar, more patient approach in the current construction. Following Nies's terminology, the construction will take place at a computable set of \emph{stages}. Given a \stage $s$, we let the following \stage be the next stage $t>s$ such that for all $z$ which were encountered by stage $s$, $\psi^i_s(z)\in T^i_{t}(z)$. Between stages $s$ and $t$, no change is made to $\psi^i$, or to any other object of the construction. The fact above now says that there are infinitely many \emph{stages}.

\subsubsection*{Followers}

We try to meet the requirements $P^e$ for $e\ge c$. As mentioned above, a requirement $P^e$ appoints \emph{followers}. A follower $x$ for $P^e$ is \emph{realised} at stage $s$ if $\vphi^e_s(x)=0$. The requirement $P^e$ is \emph{satisfied} at stage $s$ if there is some $x\in E_s$ which is realised for $P^e$. If $P^e$ is satisfied at stage $s$, then $P^e$ takes no action at stage $s$. 

With any follower $x$, ``alive'' at the end of a stage $s$ (that means that $x$ was appointed at some stage $s'\le s$ and was not cancelled at any stage $s''\in (s',s]$), we associate some auxiliary objects which aid with the reduction of the question ``$x\in E$?'' to $A^0$ and $A^1$.  
\begin{itemize}
	\item We attach a number $k^0_s(x)\in [c,e]$, and possibly also a number $k^1_s(x)\in [c,e]$. These are the \emph{levels} at which the $x$-pointer points.
	\item We define a number $\top_s(x)\in \{0,1\}$. This is the side from which $x$ next requires permission. If $\top_s(x)=1$ then $k^1_s(x)$ is defined. 
	\item For $i<2$ such that $k^i_s(x)$ is defined, we define a metabox $M^i_s(x)\subset I\left(k^i_s(x)\right)$. 
	\item For such $i$, we also define a value $t^i_s(x)$. This is the value appearing in the boxes which we tie to the use of reducing $x\in E$ to $A^i$.	
\end{itemize}

Let $s$ be a \stage at the end of which a follower $x$ is alive. Let $i<2$ such that $k^i_s(x)$ is defined. We say that an $i$-computation is defined for $x$ at stage $s$ if for all $z\in M^i_s(x)$ we have $t^i_s(x)\in T^i_s(z)$. In this case, we let 
\[ v^i_s(x) = \max \left\{ u^i_s\left(z,t^i_s(x)\right)  \,:\, z\in M^i_s(x) \right\}.\]
We denote the fact that an $i$-computation for $x$ is defined at stage $s$ by writing $v^i_s(x)\converge$. Otherwise, we write $v^i_s(x)\diverge$. 

Let $s>0$ be a \stage at the beginning of which a follower $x$ is alive. Let $r$ be the last \stage before $s$ (so $x$ was alive by the end of stage $r$). Let $i = \top_r(x)$. We say that $x$ is \emph{permitted} at stage $s$ if $v^i_r(x)\diverge$, or if $A^i_s\rest {v^i_r(x)} \ne A^i_r\rest {v^i_r(x)}$. 

All followers alive at stage $s$ are linearly ordered by priority, which is determined by the stage of their appointment (at most one follower is appointed at each \emph{stage}). As usual, when a new follower is appointed, it is chosen to be large relative to any number previously encountered in the construction, and so if $x$ and $y$ are followers at a stage $s$, then $x$ is stronger than $y$ if and only if $x<y$. 

We say that a follower $x$ \emph{requires attention} at \stage $s$ if it is appointed at stage $s$, or it is permitted at stage $s$. If some follower $x$ requires attention at stage $s$, then the strongest such follower will \emph{receive attention}. If a follower $x$, alive both at the beginning and at the end of a stage $s$, does not receive attention at stage $s$, then there is no change to $x$'s parameters.

If a follower $x$ receives attention at stage $s$, then all followers weaker than $x$ are cancelled. In addition, when a requirement $P^e$ appoints a new follower, all followers for all weaker requirements are cancelled. As a result, if $x$ is a follower for a requirement $P^e$, and $y$ is a follower for a weaker requirement $P^{e'}$, both alive at some stage, then $x$ is stronger than $y$.

\subsubsection*{Carving the boxes}

Let $a(k) = 2k+2$. We define a tree of sub-boxes of $I(k)$, indexed by strings in ${}^{\le k+1}a(k)$. Let $J(k) = \left\{ \min I(k) \right\}$; let $B(k,\seq{}) = I(k)\setminus J(k)$, where $\seq{}$ is the empty string. Note that $|B(k,\seq{})| = a(k)^{k+1}$. Given $B(k,\alpha)\subset I(k)$ for $\alpha\in {}^{\le k}a(k)$, which by induction has size $a(k)^{k+1-|\alpha|}$, we let
\[ \left\{ B(k,\conc{\alpha}{m}) \,:\, m<a(k) \right\} \]
be a partition of $B(k,\alpha)$ into $a(k)$ many subsets of equal size $a(k)^{k-|\alpha|}$. Thus we get a tree of boxes: for $\alpha,\beta \in {}^{\le k+1}a(k)$, if $\alpha\subseteq \beta$ then $B(k,\alpha)\supseteq B(k,\beta)$, and if $\alpha\perp \beta$ then $B(k,\alpha)\cap B(k,\beta) = \emptyset$. Also, for all $\alpha\in {}^{\le k+1}a(k)$, $B(k,\alpha)$ is disjoint from $J(k)$. 

Let $x$ be any follower at the end of some stage $s$. Let $i<2$ such that $k^i_s(x)$ is defined. Then there are two possibilities:

\begin{enumerate}
	\item $M^i_s(x) = J\left(k^i_s(x)\right)$; we sometimes say that $x$ \emph{resides at its private box} at stage $s$; or
	\item $M^i_s(x) = B\left(k^i_s(x),\alpha\right)$ for some $\alpha\in {}^{\le k+1}a(k)$; we let $\alpha^i_s(x)$ be this string $\alpha$.
\end{enumerate}
If $M^i_s(x) = J\left(k^i_s(x)\right)$ ($x$ resides at its private box at stage $s$) then $i=\top_s(x)=1$ and $x$ did not move since it was appointed. 

\medskip

Let $s$ be a \stage; let $i<2$ and $k\ge c$. After it has been decided if any follower receives attention at \stage $s$, and thus cancels weaker followers, the collection of followers which are alive at the end of the stage is determined. Based on this information, we define a string $\beta^i_s(k)$:

\begin{itemize}
	\item Suppose that there is some follower $y$, alive at both the beginning and the end of stage $s$, which does not receive attention at stage $s$, and such that $\top_{s-1}(y)=i$, $k^i_{s-1}(y)=k$, and $M^i_{s-1}(y)\ne J(k)$. We then let $y$ be the weakest such follower, and let $\beta^i_s(k) = \alpha^i_{s-1}(y)$.
	\item If there is no such follower $y$, then we let $\beta^i_s(k) = \seq{}$.
\end{itemize}

The main part of the proof will be the establishment of the following ``book-keeping'' lemma:

\begin{lemma}\label{lem_bookkeeping_minimal_pair}
	For all $s$, $i<2$ and $k\ge c$, $|\beta^i_s(k)| \le k$. Furthermore, there is some $m<a(k)$ such that 
	for all followers $y$ which are alive at both the beginning and the end of stage $s$, if $k^i_{s-1}(y)$ is defined and equals $k$, and $M^i_{s-1}(y)\ne J(k)$, then $\alpha^i_{s-1}(x)\ne \conc{\beta^i_s(k)}{m}$. We let $m^i_s(k)$ be the least such $m$. 
\end{lemma}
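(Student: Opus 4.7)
The plan is to prove the lemma by induction on $s$, carrying alongside it the following structural invariant: for each $(i,k)$, the alive followers with $k^i = k$ and $M^i \ne J(k)$ decompose so that those with $\top = i$ form a chain of nested $\alpha^i$-strings (the ``stem'' at $(i,k)$), while those with $\top = 1-i$ branch off stem elements, occupying pairwise disjoint immediate sub-boxes of the relevant stem box, with consecutive stem extensions obtained by appending exactly one character (chosen via $m^i_t(k)$ at an earlier stage $t < s$, by appeal to this very lemma at $t$).

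For part (a), if no qualifying follower exists then $\beta^i_s(k) = \seq{}$ has length $0$. Otherwise, consider the stable qualifying stem followers $y_1, \dots, y_n$ at $(i,k)$ ordered by priority, with strictly nested $\alpha^i(y_1) \subsetneq \dots \subsetneq \alpha^i(y_n) = \beta^i_s(k)$. Stability through stage $s$ means none is permitted, so each $v^i_{s-1}(y_j)\converge$, placing the distinct values $t^i(y_1) < \dots < t^i(y_n)$ inside $T^i_{s-1}(z)$ for every $z$ in the innermost box $M^i(y_n)$. Since $|T^i_{s-1}(z)| \le h(z) = k$, we get $n \le k$, and the invariant that stem extensions grow $\alpha^i$ by exactly one character then yields $|\alpha^i(y_n)| \le n - 1 \le k - 1$.

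For part (b), let $\beta = \beta^i_s(k)$ and let $\mathcal F$ be the set of qualifying followers $y$ with $\alpha^i_{s-1}(y) = \conc{\beta}{m}$ for some $m < a(k)$. A member of $\mathcal F$ with $\top_{s-1}(y) = i$ would be a qualifying stem follower strictly weaker than $y_n$, contradicting the choice of $y_n$ as the weakest such; so every member has $\top_{s-1}(y) = 1-i$, and is a branching follower off $B(k,\beta)$. By the invariant these use distinct immediate sub-boxes of $B(k,\beta)$, so already $|\mathcal F| \le a(k)$; to get a strict inequality, I would bound $|\mathcal F|$ dually. The walk structure restricts $k^{1-i}(y)$ for $y \in \mathcal F$ to at most two values in $\{k-1,k,k+1\}$, and on the dual side each such $y$ sits inside the stem at $(1-i, k^{1-i}(y))$, whose length is at most $k^{1-i}(y) \le k+1$ by the inductive hypothesis of part (a) applied dually. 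Summing over the at most two admissible dual levels gives $|\mathcal F| \le 2k+1 < 2k+2 = a(k)$, so some $m$ is free.

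The main obstacle I expect is not the counting arguments above, which are essentially forced once one has the right decomposition, but rather the careful verification that the structural invariant is preserved through every type of action in the construction — appointment of fresh followers, moves of followers induced by $A^i$-permissions (which advance a stem element to a more promoted box), and the cascade of cancellations of weaker followers triggered when a stronger one receives attention. In particular, when a stem follower advances and vacates its $\alpha$-position, the reassignment of the newly freed sub-boxes to fresh branching followers must respect both the disjointness of branch sub-boxes and the strict nesting of stem $\alpha$-strings. The tightness $|\mathcal F| \le 2k+1$ in part (b) is exactly what forces the branching parameter $a(k) = 2k+2$ in the definition of $|I(k)|$, and reassuringly shows that the combinatorial budget is consumed almost to the hilt but never quite exhausted.
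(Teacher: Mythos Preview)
Your plan coincides with the paper's approach: establish the nested-stem structure of the $K$-followers (the paper does this as a sequence of lemmas on the forest $O^i_s(k)$), bound the stem length by $k$ via the trace-size argument, and for part (b) show that any occupant of $\conc{\beta}{m}$ must lie in $G^i_{s-1}(k)$, then bound $|G^i_{s-1}(k)|$ by passing to the dual side. Two points, however, need repair.

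First, a minor off-by-one in (a): since each $\alpha^i(k,x)$ is nonempty (it is always of the form $\conc{\beta^i_t(k)}{m}$) and every nonempty proper initial segment of $\alpha^i(y_n)$ must equal some $\alpha^i(y_j)$ with $j<n$, you get $|\alpha^i(y_n)| = n$, not $\le n-1$. The desired bound $\le k$ still follows from $n\le k$.

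Second, and this is a genuine gap, your counting in (b) is wrong in two linked ways. There is exactly \emph{one} dual level, not two: if $i=0$ then $\top_{s-1}(y)=1$ forces $k^1_{s-1}(y)=k$, while if $i=1$ then $\top_{s-1}(y)=0$ forces $k^0_{s-1}(y)=k+1$. More importantly, it is not true that every $y\in\mathcal F$ sits in the dual stem $K^{1-i}$: when $i=0$, a freshly appointed follower has $\top=1$, $M^1=J(k)$ (its private box), and $M^0=B(k,\alpha)$, hence lies in $G^0_{s-1}(k)\cap L^1_{s-1}(k)$, not in $K^1_{s-1}(k)$. So for $i=0$ you must bound $|KL^1_{s-1}(k)|$, and the contribution of $L^1_{s-1}(k)$ --- at most $k+1$, proved by a separate tracing argument on the single private input $z=\min I(k)$ --- is exactly what brings the total to $2k+1$. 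Your ``sum over two stem levels'' happens to land on $2k+1$ as well, but by accident; the reasoning as written does not establish it.
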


Armed with Lemma \ref{lem_bookkeeping_minimal_pair}, we can now give the full instructions for the construction.

\subsubsection*{Construction}

$s=0$ is a \emph{stage}; at stage 0, we do nothing except for defining $\psi^i_0(z)=0$ for all $z<\w$ and both $i<2$. Let $s>0$; let $r$ be the last \stage prior to~$s$. As described above, $s$ is a \stage if for all $z$ mentioned by stage $r$, for both $i<2$, $\psi^i_r(z)\in T^i_s(z)$. If $s$ is not a \emph{stage}, then we do nothing at stage $s$, and we let all objects of the construction maintain their previous values; in particular, for all $z$ and $i<2$, $\psi^i_s(z)=\psi^i_{s-1}(z)= \psi^i_r(z)$. 
 
Suppose that $s$ is a \emph{stage}. A requirement $P^e$ \emph{requires attention} at stage $s$ if it is not yet satisfied, and either:
\begin{enumerate}
	\item  every follower of $P^e$ which is currently alive is realised (this includes the case that it has no followers); or
	\item some realised follower of $P^e$ is permitted. 
\end{enumerate}

If no requirement requires attention, we do nothing, and let all objects of the construction maintain their previous values. Otherwise, we let $P^e$ be the strongest requirement which requires attention at stage $s$. 

\medskip

In the first case, we appoint a new, large follower $x$ for $P^e$. We cancel all followers for requirements weaker that $P^e$. We set up $x$'s parameters as follows:
\begin{itemize}
	\item We define $k^0_s(x) = k^1_s(x) = e$.
	\item We let $\top_s(x)=1$. 
	\item We let $M^1_s(x) = J(e)$ (so $x$ begins its life by residing in its private box). 
	\item By Lemma \ref{lem_bookkeeping_minimal_pair}, we let $M^0_s(x) = B\left(e,\conc{\beta^0_s(e)}{m^0_s(e)}\right)$.
	\item We let $t^0_s(x) = t^1_s(x) = s$. 
\end{itemize}
To facilitate this, we define, for both $i<2$ and all $z\in M^i_s(x)$, $\psi^i_s(z)=s$. For all other $z$, we let $\psi^i_s(z) = \psi^i_{s-1}(z) = \psi^i_r(z)$, and end the stage.  

\medskip

In the second case, let $x$ be the strongest follower for $P^e$ which is permitted at stage $s$. We cancel all followers weaker than $x$. Now we need to promote $x$; there are three cases. 

\medskip

\noindent {\textbf{1.}} If $k^1_{s-1}(x) = k^1_r(x)$ is undefined, this means that $x$ has open permission from $A^1$, and has just received permission from $A^0$ (so $\top_{s-1}(x)=0$ and $k^0_{s-1}(x)=c$). So we now have double permission and so we enumerate $x$ into $E$. As $P^e$ now becomes satisfied, we cancel all the followers for $P^e$. 

\medskip

\noindent {\textbf{2.}} If $\top_{s-1}(x)=1$ and $k^1_{s-1}(x)=c$, then $A^1$ now permits $x$, but there is no stronger $A^1$-box for $x$ to be promoted to. In this case, we leave the $A^1$-permission open from now on. This means that $k^1_s(x)$, and so $M^1_s(x)$ and $t^1_s(x)$, are all undefined. We define $\top_s(x) = 0$, which means that from now we are seeking the $A^0$-permission which will land us in case (1). We leave $k^0_s(x)= k^0_{s-1}(x)$, $M^0_s(x) = M^0_{s-1}(x)$ and $t^0_s(x) = t^0_{s-1}(x)$. 

\medskip

\noindent {\textbf{3.}} If neither case (1) nor case (2) hold then we can have a ``regular'' promotion for $x$. Let $i = \top_{s-1}(x)$. 
\begin{itemize}
	\item We let $\top_s(x) = 1-i$. We leave $k^{1-i}_s(x) = k^{1-i}_s(x)$, $M^{1-i}_s(x) = M^{1-i}_{s-1}(x)$, and $t^{1-i}_s(x) = t^{1-i}_{s-1}(x)$. 
	\item We let $k^i_s(x) = k^i_{s-1}(x)-1$. We note that $k^i_s(x)\ge c$, for otherwise we would be in case (1) (if $i=0$) or case (2) (if $i=1$). 
	\item By Lemma \ref{lem_bookkeeping_minimal_pair}, we let $M^i_s(x) = B\left(k^i_s(x),\conc{\beta^i_s\left(k^i_s(x)\right)}{m^i_s\left(k^i_s(x)\right)}\right)$.
	\item For all $z\in M^i_s(x)$, we let $t^i_s(x) = s$. 	
\end{itemize}
To facilitate this, we set $\psi^i_s(z) = s$ for all $z\in M^i_s(x)$. For all other $z$, we set $\psi^i_s(z) = \psi^i_{s-1}(z)$. For all $z$, we set $\psi^{1-i}_s(z) = \psi^{1-i}_{s-1}(z)$. 

\medskip

In any of the cases above, we then end the stage. This completes the construction. 

\

\subsection{Justification}

Before we verify that all requirements are met, we need to show that the construction can actually be carried out as described: we need to prove Lemma \ref{lem_bookkeeping_minimal_pair}. The proof of this lemma will follow a careful analysis of how metaboxes are used, allowing us to establish bounds on the number of followers processed by these boxes at any given stage. 

\

We first establish some basic facts and notation. To begin, for $e\ge c$ and $s<\w$, let $F^e_s$ be the collection of followers for $P^e$ which are alive at the end of stage $s$. We let $F_s = \bigcup_{e\ge c}F^e_s$ be the collection of all followers alive at the end of stage $s$. 

For $x\in F_s$, let $R_s(x) = \{0,1\}$ if both $k^0_s(x)$ and $k^1_s(x)$ are defined, and let $R_s(x) = \{0\}$ if $k^1_s(x)$ is undefined. 

The following is immediate from the construction.

\begin{lemma}\label{lem_MP_k_is_monotone}
	Let $x\in F_s$. 
	\begin{enumerate}
		\item For $i\in R_s(x)$, we have $k^i_s(x)\in [c,e]$. 
		\item $\top_s(x)\in R_s(x)$. 
		\item Exactly one of the following holds:
		\begin{itemize}
			\item $k^0_s(x) = k^1_s(x)$ and $\top_s(x)=1$;
			\item $k^0_s(x) = k^1_s(x)+1$ and $\top_s(x) = 0$;
			\item $R_s(x) = \{0\}$ and $k^0_s(x) = c$.
		\end{itemize}
		\item If $t<s$, $x\in F_t$ and $i\in R_s(x)$, then $i\in R_t(x)$ and $k^i_s(x)\le k^i_t(x)$. 
		\item Suppose that $x\in F_{s-1}\cap {F_s}$ and $i\in R_s(x)$. If $k^i_{s-1}(x)\ne k^i_s(x)$ then $i=\top_{s-1}(x)$, $1-i=\top_s(x)$, and $k^i_s(x)  = k^i_{s-1}(x)-1$. If $r<s$, $x\in F_r$ and $k^i_r(x) = k^i_{s}(x)$ then $M^i_r(x) = M^i_s(x)$ and $t^i_r(x) = t^i_s(x)$. 
	\end{enumerate}	
\end{lemma}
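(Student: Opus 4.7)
The plan is to prove all five clauses simultaneously by induction on $s$. Stages $s$ which are not \emph{stages} in the sense of the construction leave every parameter unchanged, so the claims are inherited from $s-1$; thus we fix a genuine \emph{stage} $s>0$ and a follower $x\in F_s$, and split on the possible actions at stage $s$: $x$ is newly appointed; $x\in F_{s-1}$ is untouched (in which case the inductive hypothesis applies verbatim); or $x$ receives attention under Case 1, 2, or 3 of the construction. Case 1 immediately enumerates $x$ into $E$ and cancels it, so $x\notin F_s$ and nothing needs to be verified there. The clauses must be proved simultaneously because clause (3) at stage $s$ in Case 3 must be derived from clause (3) at stage $s-1$ in order to identify which option was active at the previous stage.

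For clauses (1) and (2), appointment sets $k^0_s(x)=k^1_s(x)=e\ge c$ and $\top_s(x)=1\in R_s(x)=\{0,1\}$; Case 2 preserves $k^0$ and discards $k^1$, giving $\top_s(x)=0\in R_s(x)=\{0\}$ and $k^0_s(x)\ge c$ by inductive (1); Case 3 decrements $k^i_s(x)$ for $i=\top_{s-1}(x)$, and the construction itself explicitly verifies $k^i_s(x)\ge c$ on the grounds that otherwise Case 1 or Case 2 would have applied instead. For clause (2) in Case 3 with $i=0$, the fact that $1\in R_s(x)$ follows because $k^1_{s-1}(x)$ is defined (otherwise we would have been in Case 1). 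Clause (3) is the central trichotomy and is verified by tracking the active option before and after each action: appointment puts $x$ in the first option; Case 2 is entered only from the first option with $k^1_{s-1}(x)=c$ (so $k^0_{s-1}(x)=c$ by inductive (3)), moving $x$ to the third option; Case 3 with $i=1$ starts from the first option and moves $x$ to the second; and Case 3 with $i=0$ starts from the second option and returns $x$ to the first.

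Clause (4) is then obtained by iterating one-step comparisons: $k^i$ only weakly decreases (it changes only in Case 3, and then by $-1$), and $R_s(x)$ can only shrink, since Case 2 is the sole action that removes an index, and it removes only the index $1$. For clause (5), a change $k^i_{s-1}(x)\ne k^i_s(x)$ can occur only through Case 3 with $i=\top_{s-1}(x)$, which records exactly $k^i_s(x)=k^i_{s-1}(x)-1$ and $\top_s(x)=1-i$. The preservation of $M^i$ and $t^i$ on intervals of constancy of $k^i$ is read off the construction by applying a one-step argument at each intermediate stage $t\in(r,s]$: $x$ is alive at both $t-1$ and $t$ so is not newly appointed; if untouched, parameters persist; Case 2 explicitly preserves $M^0,t^0$ and is incompatible with $i=1$; Case 3 with index $1-i$ explicitly preserves $M^i,t^i$; and Case 3 with index $i$ is excluded by the constancy hypothesis, since it would decrement $k^i$. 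There is no genuine combinatorial obstacle — the lemma is pure bookkeeping against the construction's instructions — and the only modest subtlety is the simultaneous induction, forced by Case 3's use of the inductive form of clause (3).
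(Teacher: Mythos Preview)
Your proposal is correct and is precisely the kind of case analysis the paper has in mind; the paper itself gives no proof beyond the phrase ``immediate from the construction'', so you have simply written out the bookkeeping that the authors suppressed. Your observation that clause~(3) must be carried inductively (so that in Case~3 one knows which option held at stage $s-1$) is the only point requiring any care, and you handle it correctly.
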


In light of part (5) of Lemma \ref{lem_MP_k_is_monotone}, for any follower $x$, $k\ge c$ and $i<2$, the value of $\Par^i_s(x)$ for stages $s$ such that $x\in F_s$, $i\in R_s(x)$ and $k^i_s(x)=k$ does not depend on the choice of such a stage $s$. If there is any such stage, we let $\Par^i(k,x) = \seq{k,M^i(k,x),t^i(k,x)}$ be this value. If $M^i(k,x)\ne J(k)$, then we let $\alpha^i(k,x) = \alpha^i_s(x)$ for such a stage $s$ be the string $\alpha$ such that $M^i_k(x) = B(k,\alpha)$. 

\

We give names to sets of followers, in light of part (3) of Lemma \ref{lem_MP_k_is_monotone}. Let $i<2$, and let $k\ge c$. Let $s<\w$.
\begin{itemize}
	\item We let $K^i_s(k)$ be the collection of followers $x\in F_s$ such that $i=\top_s(x)$, $k = k^i_s(x)$, and $M^i_s(x) = M^i(k,x)$ is not $J(k)$; that is, $x$ does not reside at its private box at the end of stage $s$.
	\item We let $L^i_s(k)$ be the collection of followers $x\in F_s$ such that $i=\top_s(x)$, $k=k^i_s(x)$, and $M^i_s(x) = M^i(k,x) = J(k)$. Indeed $L^i_s(k)$ is nonempty only if $i=1$, and every $x\in L^1_s(k)$ is a follower for $P^k$. 
	\item We let $G^i_s(k)$ be the collection of followers $x\in F_s$ such that $i\ne \top_s(x)$ but $i\in R_s(x)$ and $k^i_s(x) = k$. 
\end{itemize}
For any $i$, $k$ and $s$, the sets $K^i_s(k)$, $L^i_s(k)$ and $G^i_s(k)$ are pairwise disjoint. 

For brevity, we let $KG^i_s(k) = K^i_s(k)\cup G^i_s(k)$, $LG^i_s(k) = L^i_s(k)\cup G^i_s(k)$, and so on. So:
\begin{itemize}
	\item $KLG^i_s(k)$ is the collection of followers $x\in F_s$ such that $i\in R_s(x)$ and $k^i_s(x) = k$;
	\item $KL^i_s(k)$ is the collection of followers $x\in KLG^i_s(k)$  such that $i = \top_s(x)$; and
	\item $KG^i_s(k)$ is the collection of followers $x\in KLG^i_s(k)$ such that $\alpha^i_s(x)$ is defined, i.e.\ such that $M^i_s(x)\ne J(k)$. 
\end{itemize}

For $X\in \{K,L,G,KL,KG,LG,KLG \}$, we let $\bar{X^i_s}(k) = X^i_s(k)\cap X^i_{s-1}(k)$. 

\

The following lemma translates the construction into our new notation. In all of the following lemmas, let $i<2$, $k\ge c$ and $s<\w$ be a \emph{stage}.

\begin{lemma}\label{lem_MP_KLG_and_t}
	Let $x\in KLG^i_s(k)$. Let $t=t^i_s(x) = t^i(k,x)$. The \stage $t$ is is the least stage $r$ such that $x\in KLG^i_r(k)$. At stage $t$, $x$ is placed into $LG^i(k)$. 
	\begin{itemize}
	\item If $x$ was appointed at stage $t$, then $x$ is placed into $L^1_t(k)$ and $G^0_t(k)$.
	\item Otherwise, $x$ is realised at stage $t$, $i=\top_{t-1}(x)$ (and $1-i=\top_t(x)$), $x$ is extracted from $KL^i_{t-1}(k+1)$ and placed into $G^i_t(k)$; and moved from $G^{1-i}_{t-1}(k')$ to $K^{1-i}_{t}(k')$, where $k' = k^{1-i}_t(x)$. 
	\end{itemize}
	
	As a corollary, if $x\in KG^i_s(k)$, then $x$ enters $G^i(k)$ at stage $t$, and so $M^i(k,x) = B\left(k,\conc{\beta^i_t(k)}{m^i_t(k)}\right)$. The string $\beta^i_t(k)$ is defined as follows:
	\begin{itemize}
		\item If $\bar{K^i_t}(k) = \emptyset$ then $\beta^i_t(k) = \seq{}$. 
		\item If $\bar{K^i_t}(k)\ne \emptyset$ then $\beta^i_t(k) = \alpha^i(k,w)$, where $w = \max \bar{K^i_t}(k)$. 
	\end{itemize}

	The number ${m^i_t(k)}$ is chosen so that for all $y\in \bar{KG^i_t}(k)$, $\alpha^i(k,y)\ne \conc{\beta^i_t(k)}{m^i_t(k)}$.
\end{lemma}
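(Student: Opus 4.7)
The plan is essentially to unpack the construction in the new notation, relying heavily on Lemma~\ref{lem_MP_k_is_monotone}(4)--(5), which says that $k^i_s(x)$ is non-increasing in $s$ and that the auxiliary data $M^i_s(x), t^i_s(x)$ depends only on $x$ and the current value of $k^i_s(x)$. This freezes $t^i_s(x) = t^i(k,x)$ once $x$ has entered $KLG^i(k)$.

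First, I would establish that $t=t^i(k,x)$ is the least stage at which $x\in KLG^i(k)$. By Lemma~\ref{lem_MP_k_is_monotone}(4)--(5), the value $k^i_s(x)$ only strictly decreases at a stage $r$ where $x$ receives attention via either appointment or a case-3 promotion with $\top_{r-1}(x)=i$. At appointment the construction sets $t^i_r(x)=r$, and at a case-3 $i$-side promotion it again sets $t^i_r(x)=r$. Since $t^i_s(x)=t^i_r(x)$ whenever $k^i_s(x)=k^i_r(x)$, this entry stage $r$ equals $t$.

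Second, I would analyse the two entry mechanisms. If $x$ is appointed at stage $t$ with $e=k$, the construction sets $\top_t(x)=1$, $M^1_t(x)=J(e)$, and $M^0_t(x)=B(e,\cdot)$; hence $x\in L^1_t(k)\cap G^0_t(k)$. If instead $t$ is a case-3 promotion with $\top_{t-1}(x)=i$ and $k^i_t(x)=k$, then the construction flips $\top$ to $1-i$, drops $k^i$ by one, and leaves the $(1-i)$-side data intact. The flip alone puts $x$ into $G^i_t(k)$ on the $i$-side (extracting it from $KL^i_{t-1}(k+1)$ since $k^i_{t-1}(x)=k+1$), and moves it from $G^{1-i}_{t-1}(k')$ to $KL^{1-i}_t(k')$ on the other side. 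To land in $K^{1-i}_t(k')$ rather than $L^{1-i}_t(k')$ we need $M^{1-i}_t(x)\neq J(k')$; this holds because $M^0$ is never set to a $J$-box by the construction, and $M^1(x)=J(\cdot)$ is set only at appointment where $\top=1$, which is incompatible with $\top_{t-1}(x)=0$ that would be required if $i=1$.

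Third, for the corollary the value of $M^i_t(x)$ written down by the construction in the appointment case and in case~3 is literally $B(k,\conc{\beta^i_t(k)}{m^i_t(k)})$, so nothing is to prove beyond confirming that the text's definition of $\beta^i_t(k)$ agrees with the one stated in the lemma. The text selects the weakest follower $y$, alive at both ends of stage $t$ and not receiving attention, with $\top_{t-1}(y)=i$, $k^i_{t-1}(y)=k$, $M^i_{t-1}(y)\neq J(k)$; these are exactly the conditions defining $K^i_{t-1}(k)$, and since such $y$ do not receive attention their parameters are unchanged, giving $y\in \bar{K^i_t}(k)$. The main point requiring care, and the one place where I expect a brief case check, is the converse: I would verify that no follower in $\bar{K^i_t}(k)$ is wrongly excluded by the ``not receiving attention'' clause. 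A case-by-case check covers this: case~1 removes the follower from $F_t$; case~2 and case-3 $i$-side promotions change $\top$ away from $i$ or decrement $k^i$, breaking membership in either $K^i_{t-1}(k)$ or $K^i_t(k)$; and a case-3 $(1-i)$-side promotion has $\top_{t-1}=1-i\ne i$, so the follower was not in $K^i_{t-1}(k)$ to begin with. Together with $\alpha^i_{t-1}(y)=\alpha^i(k,y)$ from Lemma~\ref{lem_MP_k_is_monotone}(5) and the convention that larger index means weaker priority, this identifies the weakest such $y$ with $\max\bar{K^i_t}(k)$ and the text's choice of $m^i_t(k)$ with the one in the lemma.
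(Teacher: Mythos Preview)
The paper gives no proof for this lemma; it is presented as a direct restatement of the construction in the new $K,L,G$ notation and is left to the reader. Your detailed unpacking is the right thing to do and is essentially correct, so your approach matches the paper's (implicit) one.

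Two small points to tighten. First, in the argument that $x$ lands in $K^{1-i}_t(k')$ rather than $L^{1-i}_t(k')$, you have the indices reversed: the $M^1$-case is the one with $1-i=1$, i.e.\ $i=0$, and then $\top_{t-1}(x)=i=0$ is what contradicts the private-box condition $\top=1$. Where you wrote ``if $i=1$'' you want ``if $i=0$''. Second, you do not verify the clause ``$x$ is realised at stage $t$'' in the second bullet. This follows because the construction only promotes a follower in case~(2), which is triggered only when some \emph{realised} follower is permitted; since a new follower is appointed only when all existing ones are realised, the unrealised follower (if any) is the weakest, so the strongest permitted follower---the one selected to receive attention---must be realised.
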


Markers obey the priority ordering:

\begin{lemma}\label{lem_MP_marker_size_obeys_priority}
	Let $x,y\in F_s$ with $x<y$. Let $i\in R_s(x)$ and $j\in R_s(y)$. Then $t^i_s(x)< t^j_s(y)$. 
\end{lemma}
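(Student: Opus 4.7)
The plan is to derive the inequality from two pieces: the marker $t^j_s(y)$ is bounded below by $y$'s appointment stage, and the marker $t^i_s(x)$ is bounded above by the same stage because $y$'s continued existence forbids $x$ from acting after $y$ is appointed.

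First I would let $s_y$ denote the \stage at which $y$ is appointed. By Lemma~\ref{lem_MP_KLG_and_t}, $t^j_s(y)$ is the least \stage $r$ with $y \in KLG^j_r(k^j_s(y))$; since $y$ does not exist at any \stage before $s_y$, we immediately get $t^j_s(y) \ge s_y$.

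Next I would show $t^i_s(x) < s_y$. By Lemma~\ref{lem_MP_KLG_and_t}, $t^i_s(x)$ is the \stage at which $x$ was placed into $KLG^i(k^i_s(x))$, which is either the appointment of $x$ or one of its subsequent promotions. In either case, $x$ received attention at \stage $t^i_s(x)$. Now I argue that $x$ cannot have received attention at any \stage in the interval $[s_y, s]$. At \stage $s_y$ itself, only the follower $y$ is appointed or promoted (the construction acts on at most one follower per \stage), and $y \ne x$. At any \stage $r \in (s_y, s]$, if $x$ were to receive attention, then since $x < y$ the rule ``all followers weaker than $x$ are cancelled'' would eject $y$ from $F_r \subseteq F_s$, contradicting $y \in F_s$. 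So the last \stage at which $x$ received attention is strictly below $s_y$, and in particular $t^i_s(x) < s_y$.

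Combining the two bounds yields $t^i_s(x) < s_y \le t^j_s(y)$, as required. There is no real obstacle here: the lemma is a direct consequence of the ``one action per \stage'' discipline, the cancellation rule tied to receiving attention, and the explicit characterisation of the markers provided by Lemma~\ref{lem_MP_KLG_and_t}. The only mild subtlety is verifying that whenever $t^i_s(x)$ is (re)set, it is set at a \stage at which $x$ properly receives attention on side $i$ --- but this is exactly what Lemma~\ref{lem_MP_KLG_and_t} encodes, both for the appointment case ($x$ enters $L^1(e)\cup G^0(e)$) and for the promotion case ($x$ enters $G^i(k)$ after being extracted from $KL^i(k+1)$).
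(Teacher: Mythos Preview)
Your proof is correct and follows essentially the same approach as the paper's: both bound $t^j_s(y)$ below by the appointment stage of $y$, observe that $t^i_s(x)$ is a stage at which $x$ receives attention, and conclude that this must precede $y$'s appointment lest $y$ be cancelled. Your version is simply more explicit about the edge case $t^i_s(x) = s_y$ (handled via ``one action per stage'') and about invoking Lemma~\ref{lem_MP_KLG_and_t} to identify $t^i_s(x)$ as an attention stage, whereas the paper compresses both points into a single sentence.
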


\begin{proof}
	Let $r$ be the stage at which $y$ is appointed; so $t^j_s(y)\ge r$. At stage $t^i_s(x)$, $x$ receives attention; if $t^i_s(x)\ge r$ then $y$ would be cancelled at stage $t^i_s(x)$. Hence $t^i_s(x)<r$. 
\end{proof}

\subsubsection*{The tree of occupied boxes}

Let 
\[ O^i_s(k) = \left\{ \alpha^i(k,x)  \,:\, x\in KG^i_s(k) \right\}.\]
This is the collection of indices of metaboxes which are occupied at stage $s$. 

\begin{lemma}\label{lem_alpha_i_k_x_is_injective}
	The function $x\mapsto \alpha^i(k,x)$ as $x$ ranges over the followers in $KG^i_s(k)$ is injective. 
\end{lemma}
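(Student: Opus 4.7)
The plan is to reduce injectivity to the defining property of $m^i_t(k)$ recorded in Lemma~\ref{lem_MP_KLG_and_t}. Fix two distinct followers $x,y \in KG^i_s(k)$; without loss of generality $x<y$. Set $t_x = t^i(k,x)$ and $t_y = t^i(k,y)$. By Lemma~\ref{lem_MP_marker_size_obeys_priority} applied to these followers (for the coordinate $i$, noting $i\in R_{t_x}(x)$ and $i\in R_{t_y}(y)$), we have $t_x < t_y$. By Lemma~\ref{lem_MP_KLG_and_t}, $\alpha^i(k,y) = \conc{\beta^i_{t_y}(k)}{m^i_{t_y}(k)}$, and the number $m^i_{t_y}(k)$ was chosen so that this string differs from $\alpha^i(k,z)$ for every $z \in \bar{KG^i_{t_y}}(k)$. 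Thus it suffices to show $x \in \bar{KG^i_{t_y}}(k)$, i.e.\ that $x \in KG^i_{t_y-1}(k) \cap KG^i_{t_y}(k)$.

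First I would verify that $k^i_r(x) = k$ for every stage $r \in [t_x,s]$. Since $x$ enters $KLG^i(k)$ at $t_x$ and lies in $KG^i_s(k)$, part~(4) of Lemma~\ref{lem_MP_k_is_monotone} (monotone decrease of $k^i$) sandwiches $k^i_r(x)$ between $k$ and $k$. In particular $x \in KLG^i_r(k)$ for all such $r$. Next I would rule out $x \in L^i_r(k)$ for any $r \in [t_x,s]$: if $i=0$, then $L^0(k)=\emptyset$; if $i=1$, then $M^1_r(x) = J(k)$ forces $r$ to be $x$'s appointment stage, because once $x$ leaves $L^1(k)$ the value of $k^1$ strictly decreases (by case~(3) of the construction) or becomes undefined (case~(2)), and so $x$ cannot re-enter $KLG^1(k)$; since $x$ is currently in $KG^1_s(k)$, the appointment stage of $x$ at level $k$ is excluded from the range $[t_x,s]$ by Lemma~\ref{lem_MP_KLG_and_t}. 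Combining the two conclusions, $x \in KG^i_r(k)$ throughout $[t_x,s]$.

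Finally, at stage $t_y$ the follower $y$ receives attention, and since $y > x$ this does not cancel $x$ nor alter any of $x$'s parameters; hence $x \in KG^i_{t_y-1}(k) \cap KG^i_{t_y}(k) = \bar{KG^i_{t_y}}(k)$, and the choice of $m^i_{t_y}(k)$ gives $\alpha^i(k,x) \ne \conc{\beta^i_{t_y}(k)}{m^i_{t_y}(k)} = \alpha^i(k,y)$.

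The main obstacle is the bookkeeping showing that $x$ never drops into $L^i(k)$ between $t_x$ and $t_y$. The rest is a direct unpacking of Lemmas~\ref{lem_MP_k_is_monotone}--\ref{lem_MP_marker_size_obeys_priority} together with the defining property of $m^i_{t_y}(k)$.
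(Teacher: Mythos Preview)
Your proof is correct and follows the same approach as the paper: show that the stronger follower lies in $\bar{KG^i_{t}}(k)$ at the entry time $t$ of the weaker follower, then invoke the defining property of $m^i_t(k)$ from Lemma~\ref{lem_MP_KLG_and_t}. The paper's argument is a bit more direct: rather than tracking $k^i_r(x)$ and separately ruling out $L^i_r(k)$, it simply observes that the stronger follower cannot receive attention at any stage in $[t,s]$ (since that would cancel the weaker one), so all of its parameters---including $M^i$---are frozen on that interval, which immediately gives membership in $\bar{KG^i_t}(k)$.
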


\begin{proof}
	Let $y<x$ be elements of $KG^i_s(k)$. Let $t = t^i(k,x)$. As in the proof of Lemma \ref{lem_MP_marker_size_obeys_priority}, $y$ does not receive attention between stages $t$ and $s$, as this would cancel $x$. Hence $y\in \bar{KG^i_t}(k)$. Now by Lemma \ref{lem_MP_KLG_and_t} we see that the choice of $\alpha^i(k,x)$ at stage $t$ ensures that $\alpha^i(k,y)\ne \alpha^i(k,x)$. 
\end{proof}

We note that $O^i_s(k)$ does not contain the empty string. This is because every $\alpha^i(k,x)$ is chosen to be $\conc{\beta^i_t(k)}{m^i_t(k)}$ for $t = t^i(k,x)$. 

\begin{lemma}\label{lem_MP_occupied_boxes_are_a_forest} 
 Let $x\in KG^i_s(k)$. If $\gamma\subsetneq \alpha^i(k,x)$ is nonempty, then there is some $y\in K^i_s(k)$, stronger than $x$, such that $\gamma = \alpha^i(k,y)$. 
\end{lemma}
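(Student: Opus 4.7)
The plan is to induct on the length of $\alpha^i(k,x)$. The induction reduces everything to the following key claim: if $x\in KG^i_s(k)$ and $t=t^i(k,x)$, then the string $\beta^i_t(k)$ equals $\alpha^i(k,w)$ for some $w\in K^i_s(k)$ with $w<x$. Once this is established, the lemma follows: if $\gamma\subsetneq \alpha^i(k,x)$ is nonempty, then $\gamma\subseteq \beta^i_t(k) = \alpha^i(k,w)$, and either $\gamma=\alpha^i(k,w)$ (take $y=w$) or $\gamma\subsetneq \alpha^i(k,w)$ is still nonempty and we can apply the inductive hypothesis to $w$, which has a strictly shorter string and lies in $KG^i_s(k)$.

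To establish the key claim, first use Lemma \ref{lem_MP_KLG_and_t}: since $\alpha^i(k,x) = \conc{\beta^i_t(k)}{m^i_t(k)}$ is nonempty with $|\beta^i_t(k)| = |\alpha^i(k,x)|-1$, and since the first clause of the definition of $\beta^i_t(k)$ would give the empty string, we must be in the second clause; so $\beta^i_t(k) = \alpha^i(k,w)$ for $w = \max \bar{K^i_t}(k)$. In particular $w\in K^i_t(k)$.

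It remains to show that $w < x$ and that $w$ stays in $K^i(k)$ with unchanged parameters through stage $s$. For $w<x$: at stage $t$, either $x$ is appointed or $x$ receives attention via promotion; in either case, $x$ receives attention at stage $t$, so any follower weaker than $x$ would be cancelled at stage $t$. Since $w\in F_t$ and $w\ne x$ (as $w\in K^i_{t-1}(k)$ while $x\notin KLG^i_{t-1}(k)$), we must have $w<x$. For the preservation of $w$ through stage $s$: the only way $w$'s parameters could change, or $w$ could be cancelled, is for some follower $z\le w$ to receive attention at some stage $r\in (t,s]$. Any such $z$ satisfies $z\le w<x$, so $x$ would be cancelled at stage $r$, contradicting $x\in F_s$. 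Hence $w$ neither receives attention nor is cancelled in $(t,s]$; in particular $w$ remains in $K^i(k)$ with the same $\alpha^i(k,w)$ at stage $s$, as required.

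The only real obstacle is the bookkeeping around cancellations and promotions: one must carefully use the fact that cancellation and attention both propagate strictly downward in priority, so that $x$'s survival at stage $s$ automatically shields every stronger follower from being disturbed. Given that, the inductive unraveling of the prefix structure of $\alpha^i(k,x)$ is essentially immediate.
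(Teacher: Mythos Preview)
Your argument is correct and follows essentially the same route as the paper: both proofs induct on $|\alpha^i(k,x)|$, identify the immediate predecessor $\beta^i_t(k)$ of $\alpha^i(k,x)$ (at $t=t^i(k,x)$) with $\alpha^i(k,w)$ for $w=\max\bar{K^i_t}(k)$, observe $w<x$ because $w$ survives the cancellation at stage $t$, and deduce $w\in K^i_s(k)$ because any attention to $w$ (or to something stronger) between $t$ and $s$ would cancel $x$. Your write-up is a bit more explicit about the cancellation bookkeeping, but the structure and ideas are identical; the only minor imprecision is that your ``key claim'' needs the hypothesis $\beta^i_t(k)\ne\seq{}$, which you do use in its proof and which is guaranteed whenever a nonempty $\gamma\subsetneq\alpha^i(k,x)$ exists.
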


\begin{proof}
	Let $x\in KG^i_s(k)$, and suppose that $|\alpha^i(k,x)|>1$; let $\gamma = \alpha^i_k(x)^-$ be the immediate predecessor of $\alpha^i(k,x)$. We show that there is some $y<x$ in $K^i_s(k)$ such that $\gamma = \alpha^i(k,y)$. Then part (2) follows by induction on $|\alpha^i(k,x)|$. 
	
	Let $t = t^i(k,x)$. At stage $t$ we choose $\alpha^i(k,x) = \conc{\beta^i_t(k)}{m^i_t(k)}$, so $\gamma = \beta^i_t(k)$. Since $\beta^i_t(k) = \gamma \ne \seq{}$, we have $\bar{K^i_t}(k)\ne \emptyset$. Hence (Lemma \ref{lem_MP_KLG_and_t}) $\gamma = \alpha^i(k,w)$ for $w = \max \bar{K^i_t}(k)$. Since $w$ was not cancelled at stage $t$, $w$ is stronger than $x$. Since $x$ is not cancelled between stages $t$ and $s$, $w$ does not receive attention between these stages, and so $w\in K^i_s(k)$. 	
\end{proof}

Lemma \ref{lem_MP_occupied_boxes_are_a_forest} implies that ordered by inclusion, $O^i_s(k)$ is a forest, and that for all $x\in G^i_s(k)$, $\alpha^i(k,x)$ is a leaf of $O^i_s(k)$. It follows that for all $x\in KG^i_s(k)$, at stage $t = t^i(k,x)$, $\alpha^i(k,x)$ is a leaf of $O^i_t(k)$, as $x\in G^i_t(k)$.

The leaves of $O^i_s(k)$ all issue from a single stem $\left\{\alpha^i(k,x)  \,:\, x\in K^i_s(k) \right\}$:

\begin{lemma}\label{lem_MP_occupied_K_boxes_are_comparable}
	Let $x,y\in K^i_s(k)$ with $y<x$. Then $\alpha^i(k,y)\subsetneq \alpha^i(k,x)$. 
\end{lemma}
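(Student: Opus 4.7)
The plan is to prove the lemma by strong induction on the follower $x$: for every $x \in K^i_s(k)$, for every $y < x$ in $K^i_s(k)$, $\alpha^i(k,y) \subsetneq \alpha^i(k,x)$. Fix such an $x$ and let $t = t^i(k,x)$; by Lemma \ref{lem_MP_KLG_and_t}, at stage $t$ the value $\alpha^i(k,x) = \conc{\beta^i_t(k)}{m^i_t(k)}$ is set.

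First I would show that every $y < x$ with $y \in K^i_s(k)$ lies in $\bar{K^i_t}(k)$. Since followers are appointed large, $y$ was appointed strictly before $x$, so $y$ is alive by stage $t$. Between stages $t$ and $s$, $y$ cannot receive attention, for this would cancel $x$ (weaker than $y$), contradicting $x \in F_s$; nor can $y$ be cancelled by any other mechanism, as cancellation of $y$ by a stronger follower's action or by a stronger requirement's appointment would (since follower strength matches requirement strength) also cancel $x$. Thus $y \in K^i_t(k)$. At stage $t$ itself, the only follower whose parameters change is $x$ (whether $t$ is its stage of appointment or a stage at which it receives attention), so $y$'s parameters at stage $t-1$ match those at stage $t$, giving $y \in \bar{K^i_t}(k)$.

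Next, if $\bar{K^i_t}(k) = \emptyset$ there is nothing to prove; otherwise set $w = \max \bar{K^i_t}(k)$. I would verify $w < x$: if $x$ is appointed at $t$, then $w$, being alive at $t-1$, is strictly smaller; if $x$ receives attention at $t$, Lemma \ref{lem_MP_KLG_and_t} places $x$ in $KL^i_{t-1}(k+1)$, which is disjoint from $K^i_{t-1}(k) \ni w$, so $w \neq x$, and $w > x$ is impossible because $w$ would be cancelled when $x$ acts at $t$, contradicting $w \in K^i_t(k)$. The same cancellation/attention analysis used for $y$ shows $w$ is not cancelled and does not receive attention in $(t,s]$, so $w \in K^i_s(k)$ as well. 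By Lemma \ref{lem_MP_KLG_and_t}, $\beta^i_t(k) = \alpha^i(k,w)$, hence $\alpha^i(k,w) \subsetneq \alpha^i(k,x)$.

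To finish, let $y < x$ with $y \in K^i_s(k)$. By the first step $y \in \bar{K^i_t}(k)$, so $y \le w$. If $y = w$ we are done; if $y < w$, the inductive hypothesis applied to $w$ (which is in $K^i_s(k)$ and satisfies $w < x$) gives $\alpha^i(k,y) \subsetneq \alpha^i(k,w) \subsetneq \alpha^i(k,x)$. The main obstacle is the bookkeeping argument that every relevant $y$ actually sits in $\bar{K^i_t}(k)$, which rests on the fact that no attention or cancellation event strong enough to disturb $y$ between stages $t$ and $s$ can leave $x$ alive; once this is in hand the nesting is immediate from the construction's rule for choosing $\beta^i_t(k)$.
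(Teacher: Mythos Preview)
Your proof is correct and follows essentially the same approach as the paper: both arguments set $t = t^i(k,x)$, observe that any stronger $y\in K^i_s(k)$ must lie in $\bar{K^i_t}(k)$, identify $w = \max \bar{K^i_t}(k)$ so that $\alpha^i(k,w)=\beta^i_t(k)\subsetneq \alpha^i(k,x)$, and then apply an inductive hypothesis to compare $\alpha^i(k,y)$ with $\alpha^i(k,w)$. The only difference is the induction variable: the paper inducts on the stage $s$ (applying the hypothesis at stage $t-1$, where $y,w\in K^i_{t-1}(k)$ is immediate from $\bar{K^i_t}(k)$), whereas you induct on the follower $x$ at the fixed stage $s$, which obliges you to check additionally that $w\in K^i_s(k)$; this is harmless extra bookkeeping, and your justification of it is sound.
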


And so $M^i(k,x)\subsetneq M^i(k,y)$. 

\begin{proof}
	The lemma is proved by induction on $s$. Let $t = t^i(k,x)$. Since $y$ is stronger than $x$ and $y\in K^i_s(k)$, we have $y\in \bar{K^i_t}(k)$. Hence $\bar {K^i_t}(k)\ne \emptyset$, and so $\beta^i_t(k) = \alpha^i(k,w)$ for $w = \max \bar{K^i_t}(k)$. By its definition, $y\le w$. By induction, as $t-1 < t \le s$, we have $\alpha^i(k,y)\subseteq \alpha^i(k,w)$. By its choice, we have $\alpha^i(k,x)\supset \beta^i_t(k) = \alpha^i(k,w)$. Hence $\alpha^i(k,y)\subsetneq \alpha^i(k,x)$. 
\end{proof}

Let 
\[ \bar{O^i_s}(k) = \left\{ \alpha^i(k,x)  \,:\, x\in \bar{KG^i_s}(k) \right\}.\]
We can rephrase our main goal, Lemma \ref{lem_bookkeeping_minimal_pair}, using this notation. The lemma says that $|\beta^i_s(k)|\le k$ and that there is some $m<a(k)$ such that $\conc{\beta^i_s(k)}{m}\notin \bar {O^i_s}(k)$.

\subsubsection*{Impermissiveness}

We work toward finding bounds on the sizes of the sets of followers we have defined. To do so, we tie followers to values in traces. The first lemma shows that if the value $t^i(k,x)$ of a follower is not traced, then the follower $x$ is permitted. 

\begin{lemma}\label{lem_MP_if_untraced_then_permitted}
	Let $s$ be a \emph{stage}. Let $x\in F_{s-1}$, let $i = \top_{s-1}(x)$, and suppose that there is some $z\in M^i_{s-1}(x)$ such that $t^i_{s-1}(x)\notin T^i_s(z)$. Then $x$ is permitted at stage~$s$. 
\end{lemma}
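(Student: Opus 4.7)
The plan is to unpack the definitions of \emph{permitted} and of $v^i_r(x)$ and do a two-case split, using the fact that the trace $T^i$ has the usual c.e.\ behaviour: enumeration with a given use persists so long as the oracle below that use does not change.

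First I would translate from stage $s-1$ to stage $r$, where $r$ is the last \stage before $s$. Since nothing in the construction changes strictly between two consecutive stages (every parameter and every follower is frozen), we have $x\in F_r$, $\top_r(x)=i$, $M^i_r(x)=M^i_{s-1}(x)$ and $t^i_r(x)=t^i_{s-1}(x)$. So the hypothesis becomes: there is $z\in M^i_r(x)$ with $t^i_r(x)\notin T^i_s(z)$. Recall that $x$ is \emph{permitted at stage $s$} (using $r$) iff $v^i_r(x)\diverge$ or $A^i_s\rest v^i_r(x)\ne A^i_r\rest v^i_r(x)$.

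Next I split into two cases. If $v^i_r(x)\diverge$ we are immediately done. So assume $v^i_r(x)\converge$; by the definition of an $i$-computation being defined at stage $r$, this means that for every $z'\in M^i_r(x)$ we have $t^i_r(x)\in T^i_r(z')$, and
\[ v^i_r(x) = \max\left\{u^i_r\!\left(z',t^i_r(x)\right) \,:\, z'\in M^i_r(x)\right\}.\]
In particular, for the specific $z$ given by the hypothesis, $t^i_r(x)\in T^i_r(z)$ with use $u := u^i_r(z,t^i_r(x)) \le v^i_r(x)$.

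The key observation, which I would then invoke, is the standard permanence property of c.e.\ enumerations relative to an oracle: once $t^i_r(x)$ has been enumerated into $T^i(z)$ at stage $r$ with $A^i_r$-use $u$, this enumeration remains valid at any later stage $s$ whose approximation $A^i_s$ agrees with $A^i_r$ up to $u$. Equivalently, the only way to have $t^i_r(x)\in T^i_r(z)$ but $t^i_r(x)\notin T^i_s(z)$ is for the oracle to change below the use, i.e.\ $A^i_s\rest u\ne A^i_r\rest u$. Since $u\le v^i_r(x)$, this yields $A^i_s\rest v^i_r(x)\ne A^i_r\rest v^i_r(x)$, which is exactly the second clause in the definition of permission. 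Hence $x$ is permitted at stage $s$, as required.

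There is no real obstacle; the lemma is essentially a definition-chase. The only point that requires care is the bookkeeping between \emph{stages} and ordinary stages, which is handled by the observation that the construction is idle on non-\emph{stage} steps, and the standard observation about persistence of an enumeration with a given use under unchanged oracle initial segments.
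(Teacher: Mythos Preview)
Your proof is correct and follows essentially the same approach as the paper's: both translate from $s-1$ to the previous \emph{stage} $r$, note that if $v^i_r(x)\converge$ then $t^i_r(x)\in T^i_r(z)$ with use $u^i_r(z,t^i_r(x))\le v^i_r(x)$, and then use persistence of c.e.\ enumerations under unchanged oracle to conclude that an extraction forces an $A^i$-change below $v^i_r(x)$. The only cosmetic difference is that the paper argues by contrapositive (assume $x$ is not permitted, conclude $t^i_r(x)\in T^i_s(z)$), while you argue directly.
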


\begin{proof}
	Suppose that $x$ is not permitted at stage $s$. Let $r$ be the previous \stage before stage $s$; so $x\in F_r$, $i=\top_r(x)$ and $t = t^i_r(x)$. Since $x$ is not permitted at stage $s$, an $i$-computation is defined for $x$ at stage $r$, and $A^i_s\rest{v^i_r(x)} = A^i_s\rest{v^i_r(x)}$. Let $z\in M^i_{s-1}(x) = M^i_r(x)$. Then $t = t^i_r(x)\in T^i_r(x)$, and by its definition, $v^i_r(x)\ge u^i_r(z,t)$. The fact that $A^i$ did not change below $v^i_r(x)$ between stages $r$ and $s$ shows that $t\in T^i_s(x)$ as well. 
\end{proof}

The second lemma says that we do not change $\psi^i$ too often. This will also be useful in the verification, to show that the use of reducing $E$ to $A^i$ does not go to infinity. 
 
\begin{lemma}\label{lem_MP_no_changes_to_psi}
	Suppose that either
	\begin{itemize}
		\item $x\in G^i_s(k)$, or
		\item $x\in L^i_s(k)$, and $x$ is unrealised at stage $s$. 
	\end{itemize}
	Then for all $z\in M^i(k,x)$, $\psi^i_s(z) = t^i(k,x)$. 
\end{lemma}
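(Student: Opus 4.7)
The plan is to prove the lemma by induction on the \emph{stage} $s$, fixing $i$, $k$, $x$ as in the hypothesis and writing $t = t^i(k,x)$. The key observation driving the induction is that $\psi^i$ is altered at a \emph{stage} only when some follower $y$ acts and either is newly appointed (in which case $\psi^i_s(z')$ is reset to $s$ on $M^i_s(y)$, for both $i$) or undergoes case-(3) promotion with $\top_{s-1}(y) = i$ (in which case $\psi^i_s$ is reset on $M^i_s(y)$). In all other situations (no attention, case (1), case (2), or case-(3) promotion of $y$ with $\top_{s-1}(y) \ne i$), $\psi^i$ does not change at stage $s$.

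The base case $s = t$ is handled directly by Lemma \ref{lem_MP_KLG_and_t}: $x$ enters $KLG^i(k)$ at stage $t$ either by appointment or by a case-(3) promotion from $KL^i_{t-1}(k+1)$ into $G^i_t(k)$, and in both situations the construction resets $\psi^i_t$ to value $t$ on all of $M^i_t(x) = M^i(k,x)$. For the inductive step at $s > t$, I use Lemma \ref{lem_MP_k_is_monotone}(4)--(5) to show that $k^i_{s-1}(x) = k$, that $M^i_{s-1}(x)$ and $t^i_{s-1}(x)$ agree with their stage-$s$ values, and that $\top_{s-1}(x) = \top_s(x)$, for otherwise $x$ would just have entered $G^i_s(k)$ at stage $s$, contradicting $s > t$. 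Since $\varphi^e_s(x) \ne 0$ implies $\varphi^e_{s-1}(x) \ne 0$, the unrealisedness condition also passes to $s-1$. So the inductive hypothesis yields $\psi^i_{s-1}(z) = t$, and it remains to show that if some follower $y$ acts at stage $s$ and redefines $\psi^i$, then $M^i_s(y) \cap M^i(k,x) = \emptyset$.

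For this disjointness I split on whether the level $k_y$ associated with $y$'s $i$-box equals $k$; the case $k_y \ne k$ is trivial since $I(k_y) \cap I(k) = \emptyset$. When $k_y = k$ and $M^i_s(y) = B(k, \alpha^i_s(y))$, I exploit the forest structure: both $y$ and $x$ (whenever $x \in G^i_s(k)$) lie in $KG^i_s(k)$ and, by the remark following Lemma \ref{lem_MP_occupied_boxes_are_a_forest}, both $\alpha^i(k,x)$ and $\alpha^i_s(y)$ are leaves of $O^i_s(k)$; injectivity (Lemma \ref{lem_alpha_i_k_x_is_injective}) shows that these leaves are distinct, hence $\subseteq$-incomparable in the forest, giving the desired disjointness. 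The cases in which $x \in L^1_s(k)$ and $y$ sits in a $B(k,\cdot)$-box are settled by the built-in disjointness $J(k) \cap B(k,\cdot) = \emptyset$ from the carving.

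The delicate subcase, which I expect to be the main obstacle, is $i = 1$, $y$ newly appointed (so $M^1_s(y) = J(e)$ for $y$'s requirement index $e$), and $x \in L^1_s(k)$. The only way $M^1_s(y)$ could intersect $M^1(k,x) = J(k)$ would be $e = k$, forcing $y$ to be appointed for $P^k$. But $x$ is an unrealised follower of $P^k$, so $P^k$ cannot be in case (1) of requiring attention at stage $s$ and therefore cannot appoint a new follower. This is precisely where the ``unrealised'' hypothesis in the second alternative of the lemma is essential.
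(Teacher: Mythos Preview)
Your proposal is correct and follows essentially the same approach as the paper: for the $G$-case you use that both $\alpha^i(k,x)$ and $\alpha^i(k,y)$ are distinct leaves of $O^i_s(k)$ (hence incomparable), and for the $L$-case you use that $J(k)$ is private to $P^k$, which cannot appoint a new follower while $x$ remains unrealised. The only cosmetic differences are that the paper argues directly over an arbitrary stage $r\in(t,s]$ rather than by induction on $s$, and that you should also note (symmetrically to your $J(k)\cap B(k,\cdot)=\emptyset$ remark) the easy case $x\in G^1_s(k)$ with $M^1_s(y)=J(k)$.
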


\begin{proof}
	Let $z\in M^i(k,x)$ (in the second case, $z$ is the unique element of $M^i(k,x)$). At stage $t = t^i(k,x)$, we set $\psi^i_t(z)=t$. We need to show that at no stage $r\in (t,s]$ do we redefine the value of $\psi^i(z)$ to be $r$. 
	
	In the second case, this follows from the fact that as $z = \min I(k)$ is the unique element of $J(k)$, we only define $\psi^i_r(z)=r$ at a stage $r$ if at that stage, the requirement $P^k$ appoints new follower. However, the assumption that $x$ is unrealised at stage $s$ implies that $x$ is unrealised at all stages $r\in [t,s]$, and the instructions tell $P^k$ to appoint a new follower only when all of its followers are realised. So at no stage $t\in (t,s]$ does $P^k$ appoint a new follower, and so $\psi^i(z)$ is unchanged between stages $t$ and $s$. 
	
	In the first case, suppose, for contradiction, that at stage $r\in (t,s]$ we redefine $\psi^i_r(z)=r$. Since $z\in I(k)\setminus J(k)$, by the instructions, there is some follower $y$ which enters $G^i(k)$ at stage $r$, and $z\in M^i(k,y)$. By Lemma \ref{lem_MP_occupied_boxes_are_a_forest}, $\alpha^i(k,y)$ is a leaf of $O^i_r(k)$. Since $x\in G^i_s(k)$, we must have $x\in G^i_r(k)$, and so by the same lemma, $\alpha^i(k,x)$ is also a leaf of $O^i_r(k)$. Since $x$ does not enter $G^i(k)$ at stage $r$ (as $r>t$), we have $x\ne y$. Hence $\alpha^i(k,x)$ and $\alpha^i(k,y)$ are incomparable, which means that $M^i(k,x)$ and $M^i(k,y)$ are disjoint, contradicting $z\in M^i(k,x)\cap M^i(k,y)$. 
\end{proof}

\begin{lemma}\label{lem_MP_followers_in_KLG_are_traced}
	Let $s$ be a \emph{stage}. For all $x\in \bar{KLG^i_s}(k)$, for all $z\in M^i(k,x)$, $t^i(k,x)\in T^i_s(z)$. 
\end{lemma}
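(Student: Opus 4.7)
The plan is as follows. Let $r$ be the last \stage strictly before $s$; since the construction freezes at non-\stages, the parameters of any follower at stage $s-1$ coincide with those at $r$. In particular, $x \in KLG^i_r(k)$, $M^i_r(x) = M^i(k,x)$, and $t^i_r(x) = t^i(k,x)$. I split the argument according to whether $x$ was awaiting $A^i$-permission at stage $r$, that is, whether $\top_r(x) = i$ or $\top_r(x) = 1-i$.

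Suppose first that $\top_r(x) = i$, so $x \in KL^i_r(k)$. The heart of this case is to show that $x$ is not permitted at stage $s$. Assume toward a contradiction that it is. Standard priority bookkeeping then forces $x$ to receive attention at $s$: any permitted follower that is not selected would be cancelled by the actual recipient, and stronger requirements receiving attention would likewise cancel $x$. Hence one of the three promotion cases of the construction fires on $x$, but each outcome contradicts $x \in KLG^i_s(k)$: Case~1 cancels $x$ (removing $x$ from $F_s$), Case~2 undefines $k^1_s(x)$ (removing $x$ from $KLG^1_s(c)$), and Case~3 forces $k^i_s(x) = k - 1$. Therefore $x$ is not permitted at $s$, and the contrapositive of Lemma~\ref{lem_MP_if_untraced_then_permitted} yields $t^i(k,x) \in T^i_s(z)$ for every $z \in M^i(k,x)$.

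Suppose instead that $\top_r(x) = 1 - i$, so $x \in G^i_r(k)$. Lemma~\ref{lem_MP_no_changes_to_psi} applied at the \stage $r$ gives $\psi^i_r(z) = t^i(k,x)$ for every $z \in M^i(k,x)$. Each such $z$ was encountered no later than stage $t^i(k,x) \le r$, so the defining property of \stages, namely that $\psi^i_r(z) \in T^i_s(z)$ for every $z$ mentioned by stage $r$, delivers $t^i(k,x) \in T^i_s(z)$ at once. The main obstacle is the priority analysis in the first case, where one must exhaustively verify that no outcome of a hypothetical permission can leave $x$ inside $KLG^i_s(k)$; the second case is essentially automatic once Lemma~\ref{lem_MP_no_changes_to_psi} and the \stage mechanism are in place.
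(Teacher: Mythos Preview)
Your two-case split by the value of $\top_r(x)$ is close to the paper's argument, but it hides a genuine gap in the $KL$ case. Your claim that ``any permitted follower that is not selected would be cancelled by the actual recipient'' presupposes that \emph{some} requirement acts at stage $s$. That presupposition can fail when $x\in L^i_{r}(k)$ is \emph{unrealised}. Recall that a requirement $P^e$ requires attention via clause~(2) only when some \emph{realised} follower of $P^e$ is permitted; an unrealised follower being permitted does not, by itself, make $P^e$ (or anything else) act. So it is entirely possible that $x\in L^i_{r}(k)$ is unrealised, $x$ is permitted at stage $s$ (say because $v^i_r(x)\diverge$, or because $A^i$ happened to change below the use), and yet no requirement requires attention---in which case nothing is cancelled, $x$ does not receive attention, and $x$ remains in $L^i_s(k)$ with all parameters intact. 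In that scenario your contradiction never materialises, and the contrapositive of Lemma~\ref{lem_MP_if_untraced_then_permitted} is unavailable.

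This is exactly why the paper separates $L$ from $K$ and further splits $L$ by whether $x$ is realised. For $x\in K^i_{s-1}(k)$ your argument is fine (any such $x$ has already been promoted once, hence is realised, so permission does force action). For $x\in L^i_{s-1}(k)$ realised, your argument also goes through. But for $x\in L^i_{s-1}(k)$ unrealised you must instead argue as in your $G$ case: invoke the second clause of Lemma~\ref{lem_MP_no_changes_to_psi} to get $\psi^i_r(z)=t^i(k,x)$ for the unique $z\in J(k)$, and then use the defining property of the \stage $s$ to conclude $t^i(k,x)\in T^i_s(z)$. Once you add this sub-case, your proof matches the paper's.
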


So $v^i_s(x)\converge$. 

\begin{proof}
	There are three possibilities: either $x\in K^i_{s-1}(k)$, or $x\in G^i_{s-1}(k)$, or $x\in L^i_{s-1}(k)$. 
	
	\medskip
	
	If $x\in K^i_{s-1}(k)$ (so $i = \top_{s-1}(x)$), then the fact that $x\in K^i_s(k)$ means that $x$ is not permitted at stage $s$. For otherwise, either $x$ is cancelled at stage $s$ by some stronger follower, of $x$ receives attention at stage $s$, in which case $x$ is extracted at stage $s$ from $K^i(k)$ and possibly moved to $K^i_s(k-1)$. In this case, the conclusion follows from Lemma \ref{lem_MP_if_untraced_then_permitted}.
	
	\medskip
	
	Now suppose that $x\in G^i_{s-1}(k)$. Let $z\in M^i(k,x)$. By Lemma \ref{lem_MP_no_changes_to_psi}, $\psi^i_{s-1}(z) = t^i(k,x)$. The conclusion then follows from the fact that $s$ is a \emph{stage}: let $r$ be the previous \stage prior to $s$. As $z\in M^i_{s-1}(x) = M^i_r(x)$, $z$ is mentioned by stage $r$, and so $\psi^i_{s-1}(z)=\psi^i_r(z)\in T^i_s(z)$. 
 	
	\medskip
	
	Finally, suppose that $x\in L^i_s(k)$ (so $i=1$). There are two sub-cases. If $x$ is realised at stage $s$, then by the fact that $x\in L^i_s(k)$ it follows that $x$ is not permitted at stage $s$. Then the conclusion follows from Lemma \ref{lem_MP_if_untraced_then_permitted}. 
	
	We assume, then, that $x$ is unrealised at stage $s$. Let $z$ be the unique element of $M^i(k,x) = J(k)$. By Lemma \ref{lem_MP_no_changes_to_psi}, $\psi^i_{s-1}(z) = t^i(k,x)$. As in the second case above, the conclusion follows from the fact that $s$ is a \emph{stage}. 
\end{proof}

\subsubsection*{Sizes of sets of followers}

We can now bound the sets of followers.

\begin{lemma}\label{lem_MP_bound_on_K}
	For all $s$, $\left|K^i_s(k)\right|\le k$.
\end{lemma}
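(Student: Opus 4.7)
The plan is to exhibit, for each nonempty $K^i_s(k)$, a single element $z \in I(k)$ such that every marker $t^i(k,x)$ with $x \in K^i_s(k)$ lies in $T^i_s(z)$. Since Lemma \ref{lem_MP_marker_size_obeys_priority} makes these markers pairwise distinct as $x$ varies over followers in $F_s$, and the enumeration of $T^i$ is thinned so that $|T^i_s(z)| \le h(z) = k$ for $z \in I(k)$, the bound $|K^i_s(k)| \le k$ follows at once.

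To select such a $z$, I would first reduce to the case that $s$ is a \emph{stage}, since on non-\stages the construction makes no changes and $K^i_s(k) = K^i_{s-1}(k)$. Assuming $K^i_s(k)$ is nonempty, set $x^* = \max K^i_s(k)$. By Lemma \ref{lem_MP_occupied_K_boxes_are_comparable} the boxes $M^i(k,x)$ for $x \in K^i_s(k)$ form a chain under reverse inclusion, so $M^i(k, x^*) \subseteq M^i(k, x)$ for every such $x$. Each $B(k, \alpha)$ with $|\alpha| \le k+1$ has cardinality $a(k)^{k+1-|\alpha|} \ge 1$, so $M^i(k, x^*)$ is nonempty and I can pick $z \in M^i(k, x^*)$; then $z$ lies in $M^i(k, x)$ for every $x \in K^i_s(k)$.

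The remaining step is to invoke Lemma \ref{lem_MP_followers_in_KLG_are_traced}, which requires $x \in \bar{KLG^i_s}(k)$. If $x \in KLG^i_{s-1}(k)$ already this is automatic, so suppose $x$ freshly enters $K^i_s(k)$ at stage $s$, meaning $x$ receives attention. Appointment places $x$ in $L^1_s(e) \cup G^0_s(e)$ with $\top_s(x) = 1$, hence not into any $K^i_s(k)$; case 1 removes $x$ from play; so $x$ must act via case 2 or case 3. In case 2, $x$ moves from $G^0_{s-1}(c)$ to $K^0_s(c)$ (note $M^0$ is never a private box and $k^0$ is preserved). In case 3, with $\top_{s-1}(x) = 1-i$ so that $\top_s(x) = i$, the level $k^i$ is untouched, so $x \in G^i_{s-1}(k) \subseteq KLG^i_{s-1}(k)$. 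In either case $x \in \bar{KLG^i_s}(k)$, and Lemma \ref{lem_MP_followers_in_KLG_are_traced} delivers $t^i(k, x) \in T^i_s(z)$, completing the proof.

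The main obstacle is this bookkeeping step: certifying that every follower in $K^i_s(k)$, including those freshly promoted at stage $s$, lies in the strip $\bar{KLG^i_s}(k)$ to which Lemma \ref{lem_MP_followers_in_KLG_are_traced} applies. This hinges on a design feature of the construction that is easy to overlook, namely that each promotion in case 3 decrements only $k^{\top(x)}$ and merely toggles $\top$, so the opposite-side level parameter is preserved and a follower newly entering $K^i_s(k)$ was already tracking the same side-$i$ metabox one \stage earlier.
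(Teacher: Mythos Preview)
Your proof is correct and follows essentially the same route as the paper: pick $z$ in the innermost box $M^i(k,\max K^i_s(k))$, show every $x\in K^i_s(k)$ has $t^i(k,x)\in T^i_s(z)$ via Lemma~\ref{lem_MP_followers_in_KLG_are_traced}, and conclude by injectivity of the markers and the bound $|T^i_s(z)|\le k$. The only difference is that where you run an explicit case analysis on the construction (appointment, cases 1--3) to certify $x\in\bar{KLG^i_s}(k)$, the paper simply cites Lemma~\ref{lem_MP_KLG_and_t}: that lemma already records that a follower first enters $KLG^i(k)$ through $LG^i(k)$, so any $x\in K^i_s(k)$ must have been in $KG^i(k)$ at some strictly earlier stage, hence at $s-1$.
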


\begin{proof}
	Since $K^i(k)$ does not change between \emph{stages}, we may assume that $s$ is a \emph{stage}. Assuming that $K^i_s(k)$ is nonempty, let $w = \max K^i_s(k)$. By Lemma \ref{lem_MP_occupied_K_boxes_are_comparable}, for all $x\in K^i_s(k)$, $M^i(k,w)\subseteq M^i(k,x)$. Let $z$ be any element of $M^i(k,w)$, and let $x\in K^i_s(k)$. Then $x\in \bar{GK^i_s}(k)$ (Lemma \ref{lem_MP_KLG_and_t}). By Lemma \ref{lem_MP_followers_in_KLG_are_traced}, $t^i(k,x)\in T^i_s(z)$. By Lemma \ref{lem_MP_marker_size_obeys_priority}, the map $x\mapsto t^i(k,x)$ is injective on $K^i_s(k)$. The conclusion follows from the fact that $|T^i_s(z)|\le k$ as $z\in I(k)$ (so $h(z)=k$).
\end{proof}

Lemma \ref{lem_MP_bound_on_K} implies the first part of Lemma \ref{lem_bookkeeping_minimal_pair}: for all $s$, $|\beta^i_s(k)|\le k$. For if $\beta^i_s(k)\ne \seq{}$, then every nonempty initial segment of $\beta^i_s(k)$ equals $\alpha^i(k,x)$ for some $x\in K^i_s(k)$; this follows from the fact that $\beta^i_s(k) = \alpha^i\left(k,\max K^i_s(k)\right)$ and from Lemma \ref{lem_MP_occupied_boxes_are_a_forest}.

\begin{lemma}\label{lem_MP_bound_on_L}
	For all $s$, $\left| L^i_s(k) \right| \le k+1$.
\end{lemma}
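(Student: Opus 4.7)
The plan is to mimic the argument of Lemma \ref{lem_MP_bound_on_K}, but with an extra $+1$ accounting for a possible fresh follower appointed at the current stage. First I observe that, by definition, $L^i_s(k)$ can be nonempty only for $i=1$, and every $x \in L^1_s(k)$ is a follower for $P^k$ that has not moved since being appointed. In particular, $M^1_s(x) = J(k) = \{z\}$ where $z = \min I(k)$, and $h(z) = k$, so $|T^1_s(z)| \le k$.

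Next, I would bound the \emph{old} followers $\bar{L^1_s}(k) = L^1_s(k) \cap L^1_{s-1}(k)$. For any $x \in \bar{L^1_s}(k) \subseteq \bar{KLG^1_s}(k)$, Lemma \ref{lem_MP_followers_in_KLG_are_traced} gives $t^1(k,x) \in T^1_s(z)$ for the unique $z \in J(k)$. By Lemma \ref{lem_MP_marker_size_obeys_priority}, the map $x \mapsto t^1(k,x)$ is injective on $L^1_s(k)$, so injective on $\bar{L^1_s}(k)$. Hence $|\bar{L^1_s}(k)| \le |T^1_s(z)| \le k$.

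Finally, I would argue that $|L^1_s(k) \setminus L^1_{s-1}(k)| \le 1$. Inspecting the construction, a follower $x$ is placed into $L^1(k)$ only at the moment it is appointed, as part of case (1) when a requirement $P^e$ requires attention: then $M^1_s(x) = J(e)$ and $e = k^1_s(x)$. Once $x$ subsequently receives attention, it either moves to some $G^1$- or $K^0$-class, or its $A^1$-permission is left open, so $x$ never re-enters any $L^1(k')$. At most one follower is appointed per stage, so at most one element can be added to $L^1_s(k)$ between stages $s-1$ and $s$. Combining with the previous paragraph yields $|L^1_s(k)| \le k+1$, as required.

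There is no real obstacle here: the proof is structurally identical to that of Lemma \ref{lem_MP_bound_on_K}, the only subtlety being to check via the construction that an element can enter $L^1(k)$ only upon first appointment, so the single new follower at stage $s$ is the sole source of the $+1$ slack beyond the bound $k$ provided by the trace $T^1(z)$.
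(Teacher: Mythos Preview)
Your proof is correct and follows essentially the same route as the paper: bound $\bar{L^1_s}(k)$ by $k$ via Lemmas \ref{lem_MP_followers_in_KLG_are_traced} and \ref{lem_MP_marker_size_obeys_priority}, then add one for the at most one follower appointed at stage $s$. The only minor omission is that Lemma \ref{lem_MP_followers_in_KLG_are_traced} requires $s$ to be a \emph{stage}, so you should first reduce to that case (trivially, since $L^i_s(k)$ is unchanged between \emph{stages}); the paper does this explicitly.
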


Of course $L^0_s(k)$ is always empty, so this is of interest for $i=1$. 

\begin{proof}
	Again we may assume that $s$ is a \emph{stage}. Let $z = \min I(k)$ be the unique element of $J(k)$; so $h(z)=k$, which means that $|T^i_s(z)|\le k$. By Lemma \ref{lem_MP_followers_in_KLG_are_traced}, for all $x\in \bar{L^i_s(k)}$, $t^i(k,x)\in T^i_s(z)$; with Lemma \ref{lem_MP_marker_size_obeys_priority} we see that $\left| \bar{L^i_s}(k) \right| \le k$. At stage $s$, at most one follower is added to $L^i(k)$. 
\end{proof}

So we see that $|KL^i_s(k)| \le 2k+1$. Recall that $a(k) = 2k+2$. 

\begin{lemma}\label{lem_MP_bound_on_G}
	For all $s$, $\left| G^i_s(k) \right| < a(k)$. 
\end{lemma}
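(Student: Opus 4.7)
The plan is to reduce this bound to the previous two bounds (Lemmas \ref{lem_MP_bound_on_K} and \ref{lem_MP_bound_on_L}) by observing that every follower in $G^i_s(k)$ is simultaneously a member of some $KL^{1-i}_s(k')$, via the trichotomy in part (3) of Lemma \ref{lem_MP_k_is_monotone}. So the main work is bookkeeping with the definitions; no new combinatorial idea on the tree of occupied boxes is needed. The anticipated obstacle, if any, is merely keeping straight the asymmetry between $i=0$ and $i=1$ (the former being slightly worse because $L^1$ is allowed to be nonempty).

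\medskip

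Fix $y \in G^i_s(k)$. Then $\top_s(y) = 1-i$, $i \in R_s(y)$, and $k^i_s(y) = k$. I would apply Lemma \ref{lem_MP_k_is_monotone}(3) to the follower $y$ to pin down $k^{1-i}_s(y)$.

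\medskip

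\emph{Case $i = 0$.} Here $\top_s(y) = 1$, so the first alternative of Lemma \ref{lem_MP_k_is_monotone}(3) holds, giving $k^1_s(y) = k^0_s(y) = k$. Thus $y \in KL^1_s(k) = K^1_s(k) \cup L^1_s(k)$, and Lemmas \ref{lem_MP_bound_on_K} and \ref{lem_MP_bound_on_L} give
\[ |G^0_s(k)| \;\le\; |K^1_s(k)| + |L^1_s(k)| \;\le\; k + (k+1) \;=\; 2k+1 \;<\; a(k). \]

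\medskip

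\emph{Case $i = 1$.} Here $\top_s(y) = 0$ while $1 \in R_s(y)$, so the third alternative of Lemma \ref{lem_MP_k_is_monotone}(3) (which would force $R_s(y) = \{0\}$) is excluded, and the second alternative must hold. This gives $k^0_s(y) = k^1_s(y) + 1 = k+1$, so $y \in KL^0_s(k+1)$. Since $L^0$ is always empty (see the remark after Lemma \ref{lem_MP_bound_on_L}), in fact $y \in K^0_s(k+1)$, and Lemma \ref{lem_MP_bound_on_K} at level $k+1$ yields
\[ |G^1_s(k)| \;\le\; |K^0_s(k+1)| \;\le\; k+1 \;<\; a(k). \]

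In either case $|G^i_s(k)| < a(k)$, as required. The argument is short because the real work was in establishing the bounds on the $K$ and $L$ sets; the $G$ bound comes essentially for free from the rigid coupling between the two levels $k^0_s(x)$ and $k^1_s(x)$ imposed by the construction.
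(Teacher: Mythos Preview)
Your proof is correct and follows essentially the same approach as the paper: both observe that $G^0_s(k)\subseteq KL^1_s(k)$ and $G^1_s(k)\subseteq K^0_s(k+1)$, then apply the bounds from Lemmas \ref{lem_MP_bound_on_K} and \ref{lem_MP_bound_on_L}. The only cosmetic difference is that you derive the inclusion from the trichotomy in Lemma \ref{lem_MP_k_is_monotone}(3), while the paper cites Lemma \ref{lem_MP_KLG_and_t}; the substance and the resulting numerics ($2k+1$ and $k+1$) are identical.
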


\begin{proof}
	Let $x\in G^i_s(k)$. By Lemma \ref{lem_MP_KLG_and_t}, if $i=0$ then $x\in LK^1_s(k)$, and if $i=1$, then $x\in K^0_s(k+1)$. Hence $|G^0_s(k)| \le 2k+1 < a(k)$ and $|G^1_s(k)| \le k+1 < a(k)$. 
\end{proof}

We can finally prove Lemma \ref{lem_bookkeeping_minimal_pair}. 

\begin{proof}[Proof of Lemma \ref{lem_bookkeeping_minimal_pair}]
	We have already seen that Lemma \ref{lem_MP_bound_on_K} implies the first part of the lemma: that $|\beta^i_s(k)|\le k$. It remains to show that there is some $m<a(k)$ such that $\conc{\beta^i_s(k)}{m}\notin \bar{O^i_s}(k)$. We observe that for all $m<a(k)$, if $\conc{\beta^i_s(k)}{m}\in \bar{O^i_s}(k)$, then $\conc{\beta^i_s(k)}{m} = \alpha^i(k,x)$ for some $x\in G^i_{s-1}(k)$. This would imply the lemma, using the bound on the size of $G^i_{s-1}(k)$ given by Lemma \ref{lem_MP_bound_on_G}. To see that $\conc{\beta^i_s(k)}{m}$ cannot be $\alpha^i(k,x)$ for $x\in K^i_{s-1}(k)$, suppose for contradiction that it is; then $x\in K^i_s(k)$ (as it is in $KG^i_s(k)$), and so $\bar{K^i_s}(k)$  is nonempty, whence $\beta^i_k(s) = \alpha^i(k,w)$ for $w = \max \bar{K^i_s}(k)$. But then $x\le w$ and so (Lemma \ref{lem_MP_occupied_K_boxes_are_comparable}) we would have $\alpha^i(k,x)\subseteq \alpha^i(k,w)$ for a contradiction. 
\end{proof}

\subsection{Verification}

Having shown that the construction can proceed as described, we now show that it succeeds in enumerating a set with the desired properties. 

\begin{lemma}\label{lem_MP_infinitely_many_stages}
	There are infinitely many \emph{stages}. 
\end{lemma}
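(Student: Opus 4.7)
The plan is to argue by contradiction: suppose that only finitely many stages exist, and let $r$ be the last one. The key observation is the construction's convention that ``if $s$ is not a \emph{stage}, then we do nothing at stage $s$, and we let all objects of the construction maintain their previous values''. Hence after stage $r$ the construction is entirely frozen: no new input $z$ is ever \emph{encountered} after stage $r$, the set $Z$ of $z$ encountered by stage $r$ is finite, and for each $z$, both $i<2$, and every $s\ge r$, we have $\psi^i_s(z)=\psi^i_r(z)$ (if $z\in Z$) or $\psi^i_s(z)=0$ (if $z\notin Z$). In either case the limit $\lim_s\psi^i_s(z)$ exists, so $\psi^i$ is a \emph{total} function.

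Next I would invoke the trace property. The setup preceding the construction, via the recursion theorem and the SJT-hardness of $A^0$ and $A^1$, guarantees that $T^i$ is a genuine $A^i$-c.e.\ trace for $\psi^i$, bounded by $h$. Since $\psi^i$ is total, this yields $\psi^i(z)\in T^i(z)$ for every $z$, and in particular for every $z\in Z$ and both $i<2$. By the definition of the enumeration $\seq{T^i_s(z)}$, each of these finitely many values $\psi^i_r(z)=\psi^i(z)$ is permanently an element of $T^i_s(z)$ from some stage onward. Since $Z$ is finite, there is a single common stage $u>r$ with $\psi^i_r(z)\in T^i_u(z)$ for every $z\in Z$ and both $i<2$. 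By the very definition of a \emph{stage}, the least such $s>r$ is itself a stage, contradicting the maximality of $r$.

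The only step that I think requires care is recognising that the totality of $\psi^i$ (and so the applicability of the trace property $\psi^i(z)\in T^i(z)$ to the encountered inputs) is a \emph{consequence} of the reductio assumption, obtained through the freeze convention, rather than something to be established beforehand; the remainder is the elementary observation that a finite intersection of cofinite sets of stages is cofinite and therefore contains witnesses above $r$.
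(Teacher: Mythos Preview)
Your proof is correct and is essentially the same as the paper's: both argue by contradiction, observe that if $r$ is the last \emph{stage} then the functions $\psi^i$ freeze at $\psi^i_r$, invoke the tracing property to obtain $\psi^i_r(z)\in T^i(z)$ for the finitely many encountered $z$, and use finiteness of this set to find a common witnessing $u>r$, yielding the contradiction. The only cosmetic difference is that you note $\psi^i$ is in fact total, whereas the paper only uses convergence on the encountered inputs; this does not affect the argument.
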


\begin{proof}
	Indicated above. Let $s$ be a \emph{stage}, and suppose for a contradiction that there is no greater \emph{stage}. Then for both $i<2$, $\psi^i = \psi^i_s$. For every input $z$ mentioned by stage $s$, $\psi_i(z)\in T^i(z)$. There is a sufficiently large stage $t>s$ such that for all such $z$ (as there are only finitely many of them), $\psi_i(z)\in T^i_s(z)$ by an $A^i$-correct computation. Hence there must be a \emph{stage} greater than $s$ after all, contradiction. 
\end{proof}

\subsubsection*{Fairness and diagonalisation}

\begin{lemma}\label{lem_MP_followers_move_finitely_much}
	Every follower receives attention only finitely many times. 
\end{lemma}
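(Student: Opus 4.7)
The plan is to show that any follower $x$ appointed for $P^e$ can receive attention at most $2(e-c)+3$ times, by tracking the state $(\top_s(x), k^0_s(x), k^1_s(x))$ as a progress measure that strictly decreases with each attention after appointment. Since followers that are never appointed trivially satisfy the lemma, it suffices to bound attentions per appointed follower.

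First I would observe that each attention received by $x$ after its appointment occurs because $x$ is permitted, and so the construction falls into exactly one of cases (1), (2), or (3). Case (3), the ``regular'' promotion, strictly decreases exactly one of $k^0_s(x), k^1_s(x)$ by $1$ and flips $\top_s(x)$ --- this is part (5) of Lemma~\ref{lem_MP_k_is_monotone}. By part~(1) of the same lemma, $k^i_s(x)\ge c$ whenever defined, and moreover the construction explicitly notes that case~(3) is entered only when decreasing $k^i_s(x)$ would keep it $\ge c$ (otherwise case~(1) or case~(2) would apply). Hence the sum $k^0_s(x) + k^1_s(x)$ starts at $2e$ at appointment and cannot drop below $2c$, so case~(3) can occur at most $2(e-c)$ times.

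Next, case~(2) can occur at most once per follower: it requires $\top_{s-1}(x)=1$ and $k^1_{s-1}(x)=c$, and it leaves $k^1_s(x)$ undefined. By part~(4) of Lemma~\ref{lem_MP_k_is_monotone}, once $1\notin R_s(x)$ at some stage, this persists for the life of $x$, so case~(2) cannot recur. Similarly, case~(1) requires $k^1_{s-1}(x)$ undefined (so it can only fire after case~(2) has already occurred) and enumerates $x$ into $E$, after which the construction cancels all followers of the now-satisfied $P^e$, including $x$ itself; hence no further attentions for $x$ are possible.

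Combining these three bounds yields a total of at most $1 + 2(e-c) + 1 + 1 = 2(e-c)+3$ attentions, as desired. The argument is a direct case-by-case inspection and the only points requiring care are (i)~confirming via part~(5) of Lemma~\ref{lem_MP_k_is_monotone} that the parameters $k^i_s(x)$ are genuinely monotone (so they cannot be reset to larger values by later actions), and (ii)~noting that after case~(2) the only remaining possible attention is case~(1), since case~(3) is precluded by $k^1_s(x)$ being undefined and case~(2) is precluded by the same reason. No real obstacle arises here; the lemma is essentially a bookkeeping corollary of the monotonicity established in Lemma~\ref{lem_MP_k_is_monotone}.
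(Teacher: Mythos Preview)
Your proof is correct and takes essentially the same approach as the paper: both use the values $k^i_s(x)$ as a strictly decreasing progress measure. The paper compresses your case analysis into a single sentence by defining $k_s(x) = \sum_{i\in R_s(x)} k^i_s(x)$ and observing that this sum strictly decreases each time $x$ receives attention (after appointment), yielding the loose bound $2e$; your explicit case breakdown gives the sharper bound $2(e-c)+3$, but the underlying idea is identical.
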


\begin{proof}
	For $x\in F_s$, let $k_s(x) = \sum_{i\in R_s(x)} k^i_s(x)$. If $x\in F_{s-1}\cap F_s$ then $k_s(x)\le k_{s-1}(x)$ (Lemma \ref{lem_MP_KLG_and_t}). 

	Suppose that $x\in F_{s-1}\cap F_s$ is a follower which receives attention at stage $s$. Then either $k^0_s(x)< k^0_{s-1}(x)$ or $k^1_s(x) < k^1_{s-1}(x)$. Hence $k_s(x)< k_{s-1}(x)$. This can happen at most finitely many times. Indeed, after being appointed, each follower for requirement $P^e$ can receive attention at most $2e$ many times. 
\end{proof}

For $i<2$, $k\ge c$ and $X\in \{K,L,G\}$, let $X^i_\w(k) = \lim_s X^i_s(k)$ be the collection of followers $x$ which are in $X^i_s(k)$ for all but finitely many $s$. Lemma \ref{lem_MP_bound_on_K} shows that $|K^i_\w(k)|\le k$ and Lemma \ref{lem_MP_bound_on_L} shows that $|L^i_\w(k)|\le k+1$ (in fact, the proof of Lemma \ref{lem_MP_bound_on_L} shows that $|L^i_\w(k)|\le k$; but from now, we only care that it is finite). 

\begin{lemma}\label{lem_MP_reqs_are_met_and_fair}
	For every $e\ge c$, the requirement $P^e$ is met, and there is some stage after which no follower for $P^e$ ever requires attention. In particular, eventually $P^e$ stops enumerating new followers. 
\end{lemma}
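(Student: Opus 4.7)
The plan is to prove the lemma by induction on $e \ge c$. By the inductive hypothesis fix a stage $s_0$ after which no follower for any weaker requirement $P^{e'}$ requires attention. If $P^e$ becomes satisfied at some stage then $P^e$ is met, all its followers are cancelled, and $P^e$ never acts again, so the conclusion is immediate. Assume henceforth that $P^e$ is never satisfied; then after $s_0$ a $P^e$-follower can be cancelled only by the permission event of a stronger $P^e$-follower (cases 2 or 3 of the construction).

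The first step is to record a uniform bound on $|F^e_s|$: every $P^e$-follower $x$ satisfies $k^0_s(x) \in [c,e]$ by Lemma \ref{lem_MP_k_is_monotone}, so $x \in KG^0_s(k^0_s(x))$. By Lemmas \ref{lem_MP_bound_on_K} and \ref{lem_MP_bound_on_G}, $|K^0_s(k)| \le k$ and $|G^0_s(k)| < a(k) = 2k+2$, whence $|KG^0_s(k)| \le 3k+1$; summing over $k \in [c,e]$ yields $|F^e_s| \le N_e$ for a finite $N_e$ independent of $s$. In particular $|F^e_\omega| \le N_e$, where $F^e_\omega := \liminf_s F^e_s$ is the set of $P^e$-followers alive at every sufficiently large stage.

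The principal obstacle is to show that only finitely many permission events of $P^e$-followers occur after $s_0$. I argue by contradiction. Recursively define $y^{(j)}$ and $s^{(j)}$ as follows: let $y^{(0)}$ be the strongest $P^e$-follower with a permission event after $s_0$, and, given $y^{(j)}$ and its last permission event $s^{(j)}$ (finite by Lemma \ref{lem_MP_followers_move_finitely_much}), let $y^{(j+1)}$ be the strongest $P^e$-follower with a permission event strictly after $s^{(j)}$. If infinitely many events occur, this sequence is well-defined and infinite. For $j \ge 1$, $y^{(j)}$ is weaker than $y^{(j-1)}$ by maximality, and must have been appointed strictly after $s^{(j-1)}$, since otherwise $y^{(j-1)}$'s event at $s^{(j-1)}$ would cancel $y^{(j)}$, precluding its later event. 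Thereafter, any canceller of $y^{(j)}$ would be a stronger $P^e$-follower with a permission event after $s^{(j-1)}$, contradicting the maximality of $y^{(j)}$; hence $y^{(j)}$ is never cancelled and belongs to $F^e_\omega$. This produces infinitely many elements of $F^e_\omega$, contradicting $|F^e_\omega| \le N_e$.

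Finitely many permission events implies finitely many cancellations of $P^e$-followers after $s_0$. If $P^e$ appointed infinitely many followers, then past some stage no further cancellations occur, forcing $|F^e_s|$ to grow without bound and contradicting $N_e$. So $P^e$ appoints only finitely many followers, and with finitely many permission events as well, there is a stage $s_2$ after which no follower for $P^e$ requires attention. Finally, to verify that $P^e$ is met: by the maximality of $s_2$ some $P^e$-follower $y$ alive after $s_2$ must be unrealised; otherwise every alive $P^e$-follower would be realised, $P^e$ would require attention in case (1), and being the strongest requirement active after $s_0$ would appoint another follower, contradicting $s_2$. For such a $y$, $\vphi^e(y)$ either diverges or takes a nonzero value, while $y \notin E$ since $P^e$ is not satisfied; hence $E \ne \vphi^e$.
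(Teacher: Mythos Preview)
Your proof is correct, aside from a slip in the first paragraph: you write ``no follower for any weaker requirement $P^{e'}$,'' but the inductive hypothesis concerns $e'<e$, i.e.\ \emph{stronger} requirements. The remainder of the argument uses $s_0$ correctly in that sense.

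Your route differs from the paper's in how finiteness of the permanent followers is leveraged. The paper defines $H^e$ (the never-cancelled $P^e$-followers), observes directly that $H^e\subseteq L^1_\omega(e)\cup\bigcup_{k\in[c,e],\,i<2}K^i_\omega(k)$ and is therefore finite, takes $s_1$ to be the last stage any element of $H^e$ receives attention, and argues in one step that no follower can be appointed after $s_1$ (any such follower would itself lie in $H^e$, contradicting the choice of $s_1$). You instead prove a uniform bound $|F^e_s|\le N_e$ via $KG^0_s(k)$, and then run a more elaborate contradiction: the recursively defined sequence $y^{(j)}$ produces infinitely many distinct permanent followers if there were infinitely many permission events. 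Both arguments rest on the same underlying finiteness coming from the box bounds, but the paper's decomposition is shorter and avoids the auxiliary sequence, while your uniform-in-$s$ bound on $|F^e_s|$ is a slightly stronger intermediate statement than the paper actually needs.
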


\begin{proof}
	By induction on $e\ge c$. Suppose this has been verified for all $e'<e$.
	
	Of course, if $P^e$ is ever satisfied, then it is met, and ceases all action. We assume, then, that no follower for $P^e$ is ever enumerated into $E$. 
	
	 Let $H^e$ be the collection of followers $x$ for $P^e$ which are never cancelled. Then by Lemma \ref{lem_MP_followers_move_finitely_much}, 
	\[ H^e\subseteq L^1_\w(e) \cup \bigcup_{k\in [c,e], i<2} K^i_\w(k) .\]
	This shows that $H^e$ is finite. 
	
	Let $s_0$ be the last stage at which any follower for a requirement stronger than $P^e$ receives attention ($s_0=0$ if there are no such followers). At stage $s_0$, all followers for $P^e$ are cancelled. At the next \stage after stage $s_0$, $P^e$ will appoint a new follower. This follower can never be cancelled. This shows that $H^e$ is nonempty. 	
	
	Let $s_1$ be the last stage at which any follower in $H^e$ receives attention. We claim that $P^e$ does not appoint followers after stage $s_1$. For if it does, let $s$ be the least stage greater than $s_1$ at which $P^e$ appoints a follower $x$. At stage $s_1$, all followers weaker than the followers in $H^e$ are cancelled, and so $x$ is stronger than any other follower for $P^e$ (at any stage $t\ge s$) other than the followers in $H^e$. As the followers in $H^e$ do not require attention after stage $s_1$, $x$ can never be cancelled. But this means that $x\in H^e$, for a contradiction with the maximality of $s_1$. 
	
	The fact that $P^e$ does not appoint any followers after stage $s_1$ implies that no followers for $P^e$ require attention after stage $s_1$; so $P^e$'s overall action is finitary. It also implies that the weakest follower $x\in H^e$ is never realised. For if it is, then all followers in $H^e$ are sometime realised, and $P^e$ would be instructed to appoint a new follower. As $x\notin E$, this shows of course that $E\ne \varphi^e$, and so $P^e$ is met. 	
\end{proof}

It follows that $E$ is not computable. 

\subsubsection*{Reductions}

\begin{lemma}\label{lem_MP_non_permission_non_enumeration}
	Let $s$ be a \emph{stage}, and let $x\in F_s$. Let $i\in R_s(x)$, and suppose that $v^i_s(x)\converge$. Suppose that $A^i\rest{v^i_s(x)} = A^i_s\rest{v^i_s(x)}$. Then $x\notin E$. 
\end{lemma}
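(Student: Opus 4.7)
The plan is to argue by unpacking the construction. The only place in the instructions where a follower is enumerated into $E$ is case (1) of a regular promotion step, and this requires $x$ to receive attention at some \stage $t>s$, which in turn requires $x$ to be \emph{permitted} at stage $t$ (we can ignore cancellation since a cancelled follower is not in $E$). So I will show that under the stability hypothesis, $x$ is never permitted from the $i$-side after stage $s$, and that this is enough to prevent $x\in E$.

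The first step is to observe the following invariant: as long as $x$ is alive and $i\in R_r(x)$ at some \stage $r\ge s$, the parameters $k^i_r(x), M^i_r(x), t^i_r(x)$ equal their values at stage $s$, and in particular $v^i_r(x)=v^i_s(x)$. This follows by induction on \stages from the form of the construction: the $i$-side parameters change only when $x$ receives attention via case (2) or case (3) of a regular promotion, and in both cases the side whose parameters are modified is the $\top$-side, i.e.\ the side that was just permitted. So for the $i$-parameters to change, $x$ must be permitted with $\top=i$, which by the inductive hypothesis and the stability of $A^i$ below $v^i_s(x)$ cannot happen. Hence the invariant, and consequently $x$ is never permitted from the $i$-side after stage $s$.

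Finally, I split on $\top_s(x)$. If $\top_s(x)=i$ then $\top$ stays equal to $i$ until $x$ receives attention; but permission with $\top=i$ is impossible, so $x$ never receives attention at all, hence is never enumerated. If $\top_s(x)=1-i$, then the first time $x$ receives attention is via a $(1-i)$-permission, triggering case (2) or case (3); in either case $\top$ flips to $i$ and we revert to the previous situation, so $x$ receives attention at most once more (indeed not again). The remaining thing to verify is that this one attention cannot itself be case (1), which is the only way to enumerate $x$. When $i=0$, case (1) requires $\top_{t-1}(x)=0=i$ together with permission on that side, i.e.\ $A^i$-change below $v^i_s(x)$, which is excluded. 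When $i=1$, case (1) requires $k^1_{t-1}(x)$ undefined; but $k^1$ becoming undefined would require a prior case (2) $A^1$-permission, which is an $i$-permission and is likewise excluded. The main obstacle is simply the bookkeeping of how cases (1)--(3) reshape $\top$ and the $k^i$ values; no new idea is needed beyond the invariant above.
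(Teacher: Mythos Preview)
Your proposal is correct and follows essentially the same approach as the paper's proof: establish inductively that the $i$-side parameters remain frozen and that $x$ can never be $i$-permitted, then observe that enumeration into $E$ would force $\top(x)=i$ at some later stage, which is impossible. Your case analysis on $\top_s(x)$ and on cases (1)--(3) is a more explicit unpacking of what the paper states tersely as ``if $x\in E$ then there must be some stage $r\ge s$ such that $i=\top_r(x)$''; one small point worth making explicit is that the passage from $A^i\rest{v^i_s(x)}=A^i_s\rest{v^i_s(x)}$ to $A^i_r\rest{v^i_s(x)}=A^i_s\rest{v^i_s(x)}$ for all $r\ge s$ uses that $A^i$ is c.e., which the paper highlights.
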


\begin{proof}
	By induction on $r> s$ we can see that if $x\in F_s$ then $i\in R_r(x)$ and $k^i_r(x) = k^i_s(x)$, and so that $t^i_r(x) = t^i_s(x)$; that $v^i_r(x)\converge = v^i_s(x)$, and that if $i= \top_{r-1}(x)$ then $x$ is not permitted at stage $r$. 
	
	The point is that if $x\in E$ then there must be some stage $r\ge s$ such that $i = \top_r(x)$; if $i\ne \top_s(x)$, then $i= \top_r(x)$ where $r$ is the next stage at which $x$ receives attention. But then, the fact that $x$ will not be permitted once $i$ becomes $\top(x)$, means that $x$ cannot be enumerated into $E$. 
	
	This, of course, is where we use the fact that $A^i$ is c.e., rather than merely $\Delta^0_2$. 
\end{proof}

The next lemma states that unless cancelled, enumerated into $E$, or given open permission from $A^1$, the use $v^i(x)$ of reducing the statement $x\notin E$ to $A^i$ stabilizes. Using the notation above, let $H$ be the collection of followers which are never cancelled nor enumerated into $E$. For $x\in H$, let $R_\w(x) = \lim_s R_s(x)$ be the collection of $i<2$ such that $i\in R_s(x)$ for all but finitely many $s$. For $i\in R_\w(x)$, let $k^i_\w(x) = \lim_s k^i_s(x)$, and so on. 

\begin{lemma}\label{lem_MP_final_uses}
	Let $x\in H$ and let $i\in R_\w(x)$. There is some \stage $s$ such that $x\in F_s$, $v^i_s(x)\converge$ and $A^i\rest{v^i_s(x)} = A^i_s\rest{v^i_s(x)}$. 
\end{lemma}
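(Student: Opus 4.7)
By Lemma~\ref{lem_MP_followers_move_finitely_much} pick $s_0$ so that $x\in F_s$ and $x$ receives no attention for all $s\ge s_0$; then $k := k^i_\w(x)$, $M := M^i_s(x)$, and $t := t^i_s(x)$ are constant on $s\ge s_0$, and by Lemma~\ref{lem_MP_followers_in_KLG_are_traced}, $v^i_s(x)\converge = \max_{z\in M} u^i_s(z,t)$ at every \stage $s\ge s_0$. The plan is to split on $\top_\w(x)$. If $\top_\w(x)\ne i$, so $x\in G^i_\w(k)$, Lemma~\ref{lem_MP_no_changes_to_psi} gives $\psi^i_s(z) = t$ for every $z\in M$ at all large \stages, so $\psi^i(z) = t$ and by tracing $t\in T^i(z)$. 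Letting $u^*(z)$ be the true $A^i$-use and $v^* = \max_{z\in M} u^*(z)$, any \stage $s\ge s_0$ with $A^i_s\rest{v^*} = A^i\rest{v^*}$ then satisfies $v^i_s(x)\le v^*$ and $A^i\rest{v^i_s(x)} = A^i_s\rest{v^i_s(x)}$. The same conclusion holds when $x\in L^1_\w(k)$ is never realised, since Lemma~\ref{lem_MP_no_changes_to_psi} still applies to the unique $z = \min I(k) \in J(k) = M$.

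For the remaining case, $\top_\w(x) = i$ and $x$ is realised at all sufficiently late stages, I argue by contradiction. Suppose no \stage $s\ge s_0$ satisfies the conclusion: then at every \stage $s\ge s_0$, $A^i$ changes below $v^i_s(x)$ at some later stage. Pick a \stage $s' > s_0$ past $x$'s realisation, with $A^i$ changing below $v^i_r(x)$ between the previous \stage $r\ge s_0$ and $s'$; since $\top_r(x) = i$, the definition of permission gives that $x$ is permitted at $s'$. Because $x$ is realised and permitted, condition~(2) of the definition of ``requires attention'' makes $P^{e_x}$ require attention at \stage $s'$ (note that $P^{e_x}$ is never satisfied, since satisfaction would require enumerating some $P^{e_x}$-follower into $E$ and thereby cancelling $x$, contradicting $x\in H$). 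The strongest requirement requiring attention at $s'$ then acts: if it is $P^{e_x}$, the strongest permitted follower of $P^{e_x}$ receives attention---giving either $x$ itself (contradicting $s'>s_0$) or some stronger follower which then cancels $x$ (contradicting $x\in H$); if it is a strictly stronger requirement, its action cancels $x$ either via condition~(1) (cancelling all followers of weaker requirements) or via condition~(2) (cancelling all followers weaker than the acting follower, itself necessarily stronger than $x$ by the priority rule). In every sub-case we reach a contradiction.

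The main obstacle is the remaining sub-case where $\top_\w(x) = i$ but $x$ is never realised---logically this forces $x\in K^i_\w(k)$, which is consistent with $x\in H$ (an unrealised $x$ can be promoted into $K^i(k)$ when the realised permitted follower of $P^{e_x}$ triggering case~(2) was weaker than $x$, after which $P^{e_x}$ ceases to require attention once that weaker follower is cancelled). The permission-driven contradiction above does not apply here, so one must instead show that $\psi^i_s(z)$ stabilises at $t$ for every $z\in M$; modifications to $\psi^i(z)$ come only from promotions of weaker followers into $G^i_s(k)$ with $z\in M^i_s(y)\subsetneq M^i(k,x)$, and a bookkeeping argument using the box-tree structure (Lemmas~\ref{lem_MP_occupied_boxes_are_a_forest} and \ref{lem_MP_occupied_K_boxes_are_comparable}), the bound in Lemma~\ref{lem_MP_bound_on_K}, and the requirement fairness in Lemma~\ref{lem_MP_reqs_are_met_and_fair} should bound such modifications finitely, giving $\psi^i(z) = t$ and hence $t\in T^i(z)$ as before.
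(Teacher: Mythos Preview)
Your treatment of the first two cases ($x\in G^i_\w(k)$, and $x\in L^i_\w(k)$ with $x$ never realised) is correct and matches the paper's argument exactly. Your contradiction argument for the case $\top_\w(x)=i$ with $x$ eventually realised is also correct; the paper argues this more directly by picking a single \emph{stage} $s$ with $x\in\bar{KL^i_s}(k)$ and showing that any later $A^i$-change below $v^i_s(x)$ would permit $x$ and force $P^e$ to act, contradicting the stability of $k^i_s(x)$.

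The ``main obstacle'' you identify, however, does not exist, and your reasoning for why it might is mistaken. An unrealised follower $x$ for $P^e$ is always the \emph{weakest} follower of $P^e$: a new $P^e$-follower is appointed only when all current $P^e$-followers are realised, so while $x$ is unrealised, no further $P^e$-follower is appointed and every other $P^e$-follower is stronger than $x$. Hence there cannot be ``a realised permitted follower of $P^{e_x}$ weaker than $x$'' triggering case~(2). It follows that an unrealised $x$ never receives attention after its appointment, and therefore remains forever in $L^1(|\s(x)|)\cap G^0(|\s(x)|)$; in particular, $x\in K^i_\w(k)$ with $x$ never realised is impossible. The paper's two cases are therefore exhaustive, and your first two cases already complete the proof. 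The bookkeeping argument you sketch for the phantom case is unnecessary (and, as stated, would require more work to bound the modifications to $\psi^i(z)$ for $z\in M^i(k,x)$, since over the whole construction infinitely many weaker followers from weaker requirements could enter $G^i(k)$ with boxes inside $M^i(k,x)$).
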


\begin{proof}
	Let $k = k^i_\w(x)$ and let $t = t^i_\w(x) = t^i(k,x)$. There are two cases. 
	
	\medskip
	
	If either $x\in G^i_\w(k)$, or $x\in L^i_\w(k)$ and is never realised, then Lemma \ref{lem_MP_no_changes_to_psi} implies that for all $z\in M^i(k,x)$, $\psi^i(z) = t$. As $T^i$ traces $\psi^i$, we have $t\in T^i(z)$ for all $z\in M^i(k,x)$. There is a \stage $s>t$ at which for all $z\in M^i(k,x)$, $t\in T^i_s(z)$ by an $A^i$-correct computation. Then $s$ is as required. 
	
	\medskip
	
	In the second case, $x\in KL^i_w(k)$ and $x$ is eventually realised. Let $s>t$ be a \stage such that $x\in \bar{KL^i_s}(k)$. By Lemma \ref{lem_MP_followers_in_KLG_are_traced}, $v^i_s(x)\converge$. We claim that $s$ is a stage as required by the lemma. For if not, there is a least \stage $r>s$ by which we see an $A^i$-change below $v^i_s(x)$. Since by induction $i = \top_{r-1}(x)$, at stage $r$, $x$ would be permitted, contradicting the definition of $k$. 	
\end{proof}

\begin{lemma}\label{lem_MP_first_reduction}
	$E\le_\Tur A^0$. 
\end{lemma}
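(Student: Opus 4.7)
The plan is to exhibit an $A^0$-recursive procedure that decides ``$n \in E$'' for any input $n$. A preliminary observation is that the entire construction---the approximations $\psi^i_s$, $T^i_s$, the computable set of \stages, the list $F_s$ of alive followers, and the parameters $k^i_s(x)$, $M^i_s(x)$, $t^i_s(x)$, $v^i_s(x)$---is fully computable stage by stage with no oracle, because $T^i_s(z)$ is defined relative to the computable approximation $A^i_s$. The oracle $A^0$ is invoked only to certify that $A^0$ has stabilised below a specific use.

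Given $n$, the procedure simulates the construction and searches for the first \stage $s$ at which one of the following holds:
\begin{enumerate}
\item $n \in E_s$ (output $n \in E$);
\item $n \notin F_s$ and some follower $m \ge n$ has been appointed by stage $s$ (output $n \notin E$);
\item $n \in F_s$, $v^0_s(n)\converge$, and $A^0\rest{v^0_s(n)} = A^0_s\rest{v^0_s(n)}$ (output $n \notin E$).
\end{enumerate}
Conditions (1) and (2) are computable; only condition (3) queries the oracle.

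For termination, by Lemma \ref{lem_MP_infinitely_many_stages} there are infinitely many \stages. If $n \in E$, condition (1) is witnessed at the \stage at which $n$ enters $E$. Suppose $n \notin E$. If $n$ is never a follower, then since each requirement $P^e$ with $e \ge c$ eventually appoints at least one follower (by Lemma \ref{lem_MP_reqs_are_met_and_fair}) and new followers are chosen larger than all previously mentioned numbers, some $m \ge n$ will eventually be appointed, triggering (2). If $n$ is once a follower but is later cancelled, then from any \stage past that cancellation, (2) holds with $m = n$. Finally, if $n \in H$ is a follower never cancelled nor enumerated, then since $0 \in R_\w(n)$ automatically, Lemma \ref{lem_MP_final_uses} with $i = 0$ supplies a \stage witnessing (3).

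For correctness, (1) trivially certifies $n \in E$. In case (2), $n \notin F_s$ and no future follower can equal $n$ (all future followers strictly exceed the already-appointed $m \ge n$); since only active followers ever enter $E$, we conclude $n \notin E$. In case (3), $n \notin E$ follows at once from Lemma \ref{lem_MP_non_permission_non_enumeration}. There is no serious obstacle---the work has been done by Lemmas \ref{lem_MP_final_uses} and \ref{lem_MP_non_permission_non_enumeration}; the only point to watch is that $0$ is always in $R_\w(n)$, so case (3) is available for every surviving follower. One further observes that the procedure is uniform in indices for $A^0$, $A^1$ and the c.e.\ traces $T^0, T^1$, matching the paper's earlier remark on near-uniformity of the reduction to $A^0$.
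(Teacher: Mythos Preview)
Your proof is correct and follows essentially the same strategy as the paper: search for a \emph{stage} at which the follower is either enumerated, no longer alive, or has a defined $0$-computation whose use is $A^0$-confirmed, invoking Lemmas \ref{lem_MP_final_uses} and \ref{lem_MP_non_permission_non_enumeration} for termination and correctness. The only cosmetic difference is your handling of the ``not a follower'' case: the paper simply checks whether $x\in F_x$ (using that a follower $x$, being chosen large, is appointed at a stage $\le x$), whereas you wait for some follower $m\ge n$ to be appointed as a certificate that $n$ will never be appointed; both devices work, though the paper's is slightly more direct.
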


The point is that for $x\in H$ we always have $0\in R_\w(x)$. 

\begin{proof}
	Let $x<\w$. To decide, with oracle $A^0$, whether $x\in E$ or not, first see if $x\in F_{x}$. If not, then $x\in E$ if and only if $x\in E_{x}$.
	
	Suppose that $x\in F_x$. By Lemmas \ref{lem_MP_followers_move_finitely_much} and \ref{lem_MP_final_uses}, with oracle $A^0$, we can find a \stage $s>x$ at which one of the following hold:
	\begin{itemize}
		\item $x\notin F_s$ (that is, $x$ has been cancelled by stage $s$).
		\item $x\in E_s$ (that is, $x$ has been enumerated into $E$ by stage $s$).
		\item $v^0_s(x)\converge$, and $A^0\rest{v^0_s(x)} = A^0_s\rest{v^0_s(x)}$. 
	\end{itemize} 
	By Lemma \ref{lem_MP_non_permission_non_enumeration}, this allows $A^0$ to decide whether $x\in E$ or not. 
\end{proof}	
	
The difference between $A^0$ and $A^1$ is that there may be $x\in H$ such that $1\notin R_\w(x)$: these are the followers that receive open permission from $A^1$ but do not get later permission from $A^0$. 

\begin{lemma}\label{lem_MP_main_A1_lemma}
	There are only finitely many followers $x\in H$ such that $1\notin R_\w(x)$. 
\end{lemma}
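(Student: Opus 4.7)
The plan is to identify each follower $x\in H$ with $1\notin R_\w(x)$ as a member of $K^0_\w(c)$, and then invoke the bound $|K^0_\w(c)|\le c$ that follows from Lemma~\ref{lem_MP_bound_on_K}. So fix such an $x$. Since $x$ is appointed with $R(x)=\{0,1\}$, there is a least stage $s_0$ at which $x$ receives attention and the value $1$ leaves $R(x)$.

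The first step is to show that case~2 of the permission action is the only way this can happen. Case~1 would enumerate $x$ into $E$, contradicting $x\in H$; case~3 only toggles $\top$ between $0$ and $1$, leaving $R(x)=\{0,1\}$ intact. Hence case~2 fires at stage $s_0$, which forces $\top_{s_0-1}(x)=1$ and $k^1_{s_0-1}(x)=c$. After the case~2 action, $k^1_{s_0}(x)$ is undefined, $\top_{s_0}(x)=0$, and $k^0_{s_0}(x)=k^0_{s_0-1}(x)$. By part~(3) of Lemma~\ref{lem_MP_k_is_monotone}, $k^0_{s_0-1}(x)=k^1_{s_0-1}(x)=c$, so $k^0_{s_0}(x)=c$.

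The second step is to show that $x$ then remains fixed in $K^0(c)$. Since $x\in H$, it is never cancelled and so remains in $F_t$ for every $t\ge s_0$. If $x$ were ever permitted at some later stage $r>s_0$, then at stage $r-1$ the pointer $k^1(x)$ is still undefined (no rule can restore it), so case~1 would apply at stage $r$ and enumerate $x$ into $E$, again contradicting $x\in H$. Hence $x$ receives no further attention, and for every \stage $t\ge s_0$ we have $\top_t(x)=0$, $k^0_t(x)=c$, and $M^0_t(x)=M^0_{s_0}(x)$. The private box $J(k)$ is only ever assigned on the $A^1$ side at appointment, so $M^0_{s_0}(x)\ne J(c)$, and therefore $x\in K^0_t(c)$ for all such $t$, i.e.\ $x\in K^0_\w(c)$.

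Combining the two steps, the set of followers $x\in H$ with $1\notin R_\w(x)$ embeds into $K^0_\w(c)$, which by Lemma~\ref{lem_MP_bound_on_K} has size at most $c$. The only mildly delicate point in the argument is the case analysis at stage $s_0$ together with the observation that once the $A^1$-pointer is dropped it can never come back without triggering case~1; both are immediate from re-reading cases 1--3 of the construction.
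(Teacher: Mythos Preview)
Your proof is correct and follows exactly the paper's approach: show that any $x\in H$ with $1\notin R_\w(x)$ lies in $K^0_\w(c)$, then invoke Lemma~\ref{lem_MP_bound_on_K}. The paper's proof compresses all of this into two sentences, while you have carefully unpacked the case analysis and the reasons $x$ cannot receive attention after the open $A^1$-permission is granted; the content is the same.
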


\begin{proof}
	Suppose that $x\in H$ and $1\notin R_\w(x)$. Then $x\in K^0_\w(c)$. By Lemma \ref{lem_MP_bound_on_K}, $|K^0_\w(c)| \le c$. 
\end{proof}

\begin{lemma}\label{lem_MP_second_reduction}
	$E\le_\Tur A^1$.
\end{lemma}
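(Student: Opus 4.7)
The plan is to mimic the proof of Lemma \ref{lem_MP_first_reduction}, but to absorb the finitely many ``bad'' followers as finite nonuniform information. Let $B = \{x \in H : 1 \notin R_\w(x)\}$. By Lemma \ref{lem_MP_main_A1_lemma}, $B$ is finite, so we may hardwire into our reduction the finite set $B$ together with the finite set $B \cap E$. This is the one place where the reduction is not uniform; it corresponds precisely to the ``open $A^1$-permission'' loophole described in the informal discussion.

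Now I describe the $A^1$-computable procedure for deciding ``$x \in E$?''. Given $x$, first check if $x \in F_x$; if not, return the value of $\bool{x \in E_x}$, since no follower is ever enumerated into $E$ after it is cancelled. If $x \in F_x$ and $x \in B$, return the hardwired value of $\bool{x \in B \cap E}$. Otherwise $x \in F_x \setminus B$, and I claim we can use $A^1$ to find a \emph{stage} $s > x$ witnessing one of the following three conditions:
\begin{enumerate}
    \item $x \notin F_s$ (i.e., $x$ has been cancelled by stage $s$);
    \item $x \in E_s$; or
    \item $v^1_s(x) \converge$ and $A^1 \rest v^1_s(x) = A^1_s \rest v^1_s(x)$.
\end{enumerate}
If any of (1) or (2) hold, we read off the answer; if (3) holds, Lemma \ref{lem_MP_non_permission_non_enumeration} tells us $x \notin E$. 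So the conclusion follows once we justify the search.

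To justify the search, observe that if $x \in F_s$ for all sufficiently large $s$ and $x$ is never enumerated into $E$, then $x \in H$. Since $x \notin B$, we have $1 \in R_\w(x)$, and then Lemma \ref{lem_MP_final_uses} (applied with $i = 1$) supplies some \emph{stage} $s$ at which (3) holds. Thus, in all cases, at least one such stage $s$ exists. Because $A^1$ enumerates the changes in $A^1$ below any given threshold and because the sequences $\seq{F_s}$, $\seq{E_s}$, $\seq{M^1_s(x)}$, $\seq{t^1_s(x)}$ and $\seq{T^1_s(z)}$ are all effective (the last uniformly in $A^1$), the three conditions above are each $A^1$-c.e., and Lemma \ref{lem_MP_infinitely_many_stages} ensures that the search ranges over an infinite computable set of candidate stages. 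Hence an $A^1$-oracle can find such an $s$ effectively, completing the reduction.

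The only conceptual obstacle is the nonuniformity introduced by $B$; but this is unavoidable and in fact precisely the asymmetry foreshadowed in the discussion, arising because the recursion-theoretic constant $c$ may exceed $1$ and the $A^1$-side absorbs the ``open permissions'' through the bounded set $K^0_\w(c)$.
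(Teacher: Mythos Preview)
Your proof is correct and follows essentially the same approach as the paper: you search, with oracle $A^1$, for a \emph{stage} at which the follower is cancelled, enumerated, or has a stable $A^1$-use, and you invoke Lemma~\ref{lem_MP_main_A1_lemma} to bound the set of followers for which this search may fail, absorbing those finitely many exceptions nonuniformly. One harmless redundancy: since $B\subseteq H$ and by definition no element of $H$ is ever enumerated into $E$, the set $B\cap E$ is empty, so you need only hardwire $B$ itself and output ``$x\notin E$'' for each $x\in B$.
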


\begin{proof}
	Similar to the proof of Lemma \ref{lem_MP_first_reduction}. For a follower $x\in F_x$, we search for a \stage $s>x$ by which it is either cancelled, enumerated, or $v^1_s(x)\converge$ and $A^1_s$ is correct up to this value. Lemma \ref{lem_MP_main_A1_lemma} says that this search will terminate for all but finitely many followers $x$, and so non-uniformly will give a method for reducing $E$ to $A^1$. 
\end{proof}

\section{Proof of Theorem \ref{thm_main}}

In this section we adapt the construction of the previous section and provide a construction of a noncomputable c.e.\ set $E$, computable from every SJT-hard c.e.\ set, thus proving Theorem \ref{thm_main}. 

\subsection{Discussion}

There is only one really new ingredient, and our treatment is not too surprising to those familiar with $\Pi^0_2$ constructions on trees. Instead of being given two (or finitely many) SJT-hard c.e.\ sets, together with traces for $\Sigma^0_2$ functions we approximate, we need to guess, among all pairs of c.e.\ sets and possible traces, which indeed trace the functions that we enumerate. The construction is performed on a tree of strategies. Nodes $\tau$ on the tree will test if there are infinitely many $\tau$-\emph{stages}, at which we can calculate uses of reducing $x\in E$ to the corresponding c.e.\ set $W^e$. 

The small degree of non-uniformity which was necessary in the construction of the previous section plays an important role. A follower $x$ for some node $\s$ on the tree can only be cleared by only finitely many nodes $\tau$ for which $\s$ guesses that there are infinitely many $\tau$-\emph{stages}. In other words, it requires eventual permission from only finitely many c.e.\ sets $W^e$. Other SJT-hard sets do not comprehend $x$'s existence. This is akin to those sets giving $x$ immediate open permission. The tree machinery ensures that each such set is troubled by at most finitely many such followers.

\subsection{Construction}

As before, we enumerate a set $E$. To ensure that $E$ is noncomputable, we meet the same positive requirements $P^e$ as in the previous section, which state that $E\ne \vphi^e$, where $\seq{\vphi^e}_{e<\w}$ is a list of all partial computable functions. 

\

Let $\seq{W^e}_{e<\w}$ be an effective list of all c.e.\ sets. Shortly we will define, for all $e<\w$, an order function $h^e$. Let, uniformly in $e$, $\seq{T^{e,c}}_{c<\w}$ list all $W^e$-c.e.\ traces which are bounded by $h^e$. During the construction we define, uniformly in $e$, a partial $\Sigma^0_2$ function $\psi^e$. The negative requirements are named $N^{e,c}$, and state that if $T^{e,c}= \seq{T^{e,c}(z)}_{z<\w}$ traces $\psi^e$, then $E$ is computable from $W^e$. 

\

The construction takes place on a tree of strategies. The definition of the tree is recursive: given a node (a strategy) on the tree, the immediate successors of the node on the tree are determined by the possible outcomes of the node. If a node $\s$ works for $P^e$, then it has a single outcome. If a node $\tau$ works for $N^{e,c}$, then $\tau$ has two outcomes, $\infty$ and $\finn$. The outcome $\infty$ is stronger than the outcome $\finn$, and this ordering induces a total priority ordering on the tree. The outcome $\infty$ indicates that there are infinitely many $\tau$-\emph{stages}. 

Let $\PPP^e$ be the collection of nodes on the tree that work for $P^e$, and $\NNN^{e,c}$ be the collection of nodes that work for $N^{e,c}$. We let and $\NNN^e = \bigcup_c \NNN^{e,c}$, $\NNN = \bigcup_e \NNN^e$, and $\PPP=\bigcup_e \PPP^e$. 

To complete the definition of the tree of strategies, we need to show how to assign requirements to nodes. We could simply assign each level of the tree a single requirement. However, for simplicity of presentation, we would like to assume that for all $\s\in \PPP$ there is some $\tau\in \NNN$ such that $\conc{\tau}{\infty}\subseteq \s$. The easiest way to achieve this is by recursively assign requirements to nodes during the definition of the tree; to each node $\rho$ we assign the strongest requirement (from an $\w$-list of all requirements) which has not yet been assigned to any proper initial segment of $\rho$, subject to the restriction that if there is no $\tau$, which has been already placed in $\NNN$, such that $\conc{\tau}{\infty}\subseteq \rho$, then we must assign a negative requirement to $\rho$. After verifying that there is a true path, we will easily see that the true path contains a node of the form $\conc{\tau}{\infty}$ for some $\tau\in \NNN$ (as there are SJT-hard c.e.\ sets), and this would allow us to show that every requirement is assigned to some node on the true path. 

\subsubsection*{The order functions} 

The next order of business is defining the order functions $h^e$. These derive from the structure of the tree and the intended structure of the boxes. The idea is that a follower $x$ for a node $\s\in \PPP$ needs to be cleared by all $\tau\in \NNN$ such that $\conc{\tau}{\infty}\subseteq \s$: if $\tau\in \NNN^e$ then $W^e$-permission is required. The guess by $\s$ that there are infinitely many $\tau$-\emph{stages} allows for the machinery of the previous section to operate smoothly. 

The search over all traces $T^{e,c}$ (for $c<\w$) means that the various nodes $\tau\in \NNN^e$ have to cooperate in defining a single function $\psi^e$: each $\tau$ gets its own column to play with. A node $\tau$ will require a number of $k$-boxes for various $k$; as there are infinitely many $\tau$'s in $\NNN^e$, to keep $h^e$ well-defined, the smallest $k$ such that $\tau$ requests $k$-boxes needs to increase with $\tau$. For convenience of notation, we let each $\tau$ request $k$-boxes for $k\ge |\tau|$. 

For a fixed $\tau$ and $k\ge |\tau|$, how many $k$-boxes? We need to count the number of possible followers that progress down the chain of boxes, similarly to what has been done in the justification of the previous section. In other words, we need to calculate bounds on the sizes of sets $G^\tau_s(k)$, $K^\tau_s(k)$ and $L^\tau_s(k)$ which are the analogues of the sets $G^i_s(k)$, $K^i_s(k)$ and $L^i_s(k)$ of the previous section. We will still have $|K^\tau_s(k)| \le k$ as this is bounded by the potential size of the trace. However, it is no longer true that every $x\in G^\tau_s(k)$ is an element of the same $K^\rho_s(k')$ (for $k'\in \{k,k+1\}$): more than two sets mean that $\top_s(x)$ may have value among a number of nodes $\rho$ of length at most $k'\le k+1$. The number of these strings is bounded by the number of nodes of length $k+1$; as the tree is at most binary branching, the number of such nodes is bounded by $2^{k+1}$. 

We also need to bound $L^\rho_s(k)$ for such strings $\rho$; to avoid too meticulous an examination of the way requirements are assigned to nodes, we allow more than one node $\s$ require a private $k$-box from $\rho$. In general, $\s\in \PPP$ will require a private $|\s|$-box from the longest $\tau\in \NNN$ such that $\conc{\tau}{\infty}\subseteq \s$. So for $\rho\in \NNN$ and $k>|\rho|$, let $\Theta^\rho(k)$ be the collection of nodes $\s\in \PPP$ of length $k$ for which $\rho$ is the longest string in $\NNN$ such that $\s$ extends  $\conc{\rho}{\infty}$; of course $\left|\Theta^\rho(k)\right|\le 2^k$. Counting all these contributions, we let, for all $k<\w$,
\[ a(k) =  1+ 2^{k+2}(k+2)(1+2^{k+1}),\]
which will be a bound on $G^\tau_s(k)$. Then any $\tau\in \NNN$ will require $|\Theta^\tau(k)| + (a(k))^{k+1}\le 2^k + (a(k))^{k+1}$ many $k$-boxes for $k\ge |\tau|$. 

We can now define the order functions $h^e$. Partition $\w$ into $\w$ many columns $\w^{[\tau]}$, indexed by $\tau\in \NNN^e$. For each $\tau\in \NNN^e$, partition $\w^{[\tau]}$ into intervals $I^\tau(k)$ for $k\ge |\tau|$, such that for all $k\ge |\tau|$, 
\[ | I^\tau(k) | = 2^k + a(k)^{k+1} .\] 
Let $h^e$ be an order function such that for all $\tau\in \NNN^e$, for all $k\ge |\tau|$, for all $x\in I^\tau(k)$, $h^e(x)\le k$. The fact that for all $k$, there are only finitely many nodes $\tau\in \NNN^e$ of length at most $k$, implies that such an order function can be found, and in fact defined effectively given $e$.

\subsubsection*{Local traces and \emph{stages}}

The nodes $\tau\in \NNN^e$ collaborate in defining the function $\psi^e$. We let $\psi^\tau = \psi^e\rest{\w^{[\tau]}}$; the node $\tau$ is responsible for defining $\psi^\tau$. The function $\psi^\tau$ is defined to be the partial limit of a uniformly computable sequence $\seq{\psi^\tau_s}_{s<\w}$. 

For $\tau\in \NNN$, we will shortly define the collection of $\tau$-\emph{stages}. To define $\psi^\tau_s$, we start with $\psi^\tau_0(z)=0$ for all $z\in \w^{[\tau]}$. If $s>0$ is not a $\tau$-\emph{stage}, then we let $\psi^{\tau}_{s} = \psi^\tau_{s-1}$. Thus if there is a last $\tau$-\emph{stage} $s$, then $\psi^\tau = \psi^\tau_s$. If $s$ is a $\tau$-\emph{stage}, then we may redefine $\psi^\tau_s(z)=s$ for some $z\in \w^{[\tau]}$; for all other $z$, we let $\psi^{\tau}_s(z) = \psi^\tau_{s-1}(z)$. 

Let $\tau\in \NNN^{e,c}$. For brevity, we let $T^\tau = T^{e,c}\rest{\w^{[\tau]}}$. Thus if $T^{e,c}$ is a trace for $\psi^e$, then $T^\tau$ is a trace for $\psi^\tau$. For $z\in \w^{[\tau]}$ and $s<\w$, we let $T^\tau_s(z)$ be the collection of numbers enumerated into $T^\tau(z)$ by stage $s$ with oracle $W^e_s$. We may assume that for all $z$ and $s$, $|T^\tau_s(z)|\le h^e(z)$, so for all $k\ge |\tau|$, for all $z\in I^\tau(k)$, $|T^\tau_s(z)|\le k$. For $t\in T^\tau_s(z)$, we let $u^\tau_s(z,t)$ be the $W^e_s$-use of enumerating $t$ into $T^\tau_s(z)$. 

The collection of $\tau$-\emph{stages} depends on whether $\tau$ is \emph{accessible} at stage $s$, a notion which we define later. Given this, we define the collection of $\tau$-\emph{stages}. For all $\tau$, 0 is a $\tau$-\emph{stage}. Let $s>0$, and let $\tau\in \NNN$. If $\tau$ is not accessible at stage $s$, then $s$ is not a $\tau$-\emph{stage}. Suppose that $\tau$ is accessible at stage $s$; let $r$ be the previous $\tau$-\emph{stage}. Then $s$ is a $\tau$-\emph{stage} if for every $z\in \w^{[\tau]}$ mentioned by stage $r$, $\psi^\tau_r(z) = \psi^\tau_{s-1}(z)$ is an element of $T^\tau_s(z)$.

\subsubsection*{Followers}

Nodes $\s\in \PPP$ appoint followers. For any follower $x$, we let $\s(x)$ be the node which appointed $x$. A follower $x$ for $\s\in \PPP^e$ is \emph{realised} at stage $s$ if $\vphi^e_s(x)=0$. The requirement $P^e$ is \emph{satisfied} at stage $s$ if there is some $x\in E_s$ such that $\vphi^e_s(x)=0$. If the requirement $P^e$ is satisfied at stage $s$, then no node $\s\in \PPP^e$ takes any action at stage $s$. 

With any follower $x$, alive at the end of a stage $s$, we associate auxiliary objects.
\begin{itemize}
	\item We attach a nonempty set of nodes $R_s(x)\subset \NNN$. For all $\tau\in R_s(x)$, $\s(x)$ extends $\conc{\tau}{\infty}$. The set $R_s(x)$ is the set of nodes which need to clear $x$ before it is enumerated into $E$. 
	\item We define a node $\top_s(x)\in R_s(x)$. This is the node from which $x$ next requires permission. 
	\item For all $\tau\in R_s(x)$, we define a number $k^\tau_s(x)\ge |\tau|$. This is the level at which $x$ points. 
	\item For all $\tau\in R_s(x)$, we define a box $M^\tau_s(x)\subset I^\tau(k^\tau_s(x))$. 
	\item For all $\tau\in R_s(x)$, we define a value $t^\tau_s(x)<\w$. 
\end{itemize}

Suppose that a follower $x$ is alive at the end of stage $s$, and let $\tau\in R_s(x)$. Suppose that $s$ is a $\tau$-\emph{stage}. We say that a $\tau$-computation is defined for $x$ at stage $s$ if for all $z\in M^\tau_s(x)$ we have $t^\tau_s(x)\in T^\tau_s(z)$. In this case, we let 
\[ v^\tau_s(x) = \max \left\{ u^\tau_s\left(z,t^\tau_s(x)\right)  \,:\, z\in M^\tau_s(x) \right\}.\] We denote the fact that a $\tau$-computation is defined for $x$ at stage $s$ by writing $v^\tau_s(x)\converge$; otherwise, we write $v^\tau_s(x)\diverge$. 

Let $s$ be a stage at the beginning of which a follower $x$ is alive, and suppose that $s$ is a $\top_{s-1}(x)$-\emph{stage}; let $\tau = \top_{s-1}(x)$.  Let $r$ be the previous $\tau$-\emph{stage} before $s$. We say that $x$ is \emph{permitted} at stage $s$ if $v^i_r(x)\diverge$ or if $W^e_s\rest{v^\tau_{r}(x)} \ne W^e_r\rest{v^\tau_r(x)}$, where $\tau\in \NNN^e$. Note again that a follower $x$ is permitted at stage $s$ only if $s$ is a $\top_{s-1}(x)$-\emph{stage}; but $x$ may be permitted at a stage $s$ at which $\s(x)$ is not accessible: recall that $\s(x)$ properly extends the node $\top_{s-1}(x)$. 

All followers alive at stage $s$ are linearly ordered by priority, which is given to followers appointed earlier; again, at most one new follower is appointed at each stage. New followers are chosen large, and so the priority ordering coincides with the natural ordering on natural numbers. 

The general structure of the stage is as follows. During stage $s$ we inductively define the collection of nodes which are accessible at that stage. Once a node $\s\in \PPP$ has been declared accessible, we see if it wants to appoint a new follower or not; if it does, it ends the stage, and cancels followers for all weaker nodes. Otherwise, it lets its only child be accessible. Once a node $\tau\in \NNN$ has been declared accessible, we decide if $s$ is a $\tau$-\emph{stage} or not. If not, and $|\tau|<s$, then we let $\conc{\tau}{\finn}$ be next accessible. If $|\tau|=s$ then we end the stage. If $s$ is a $\tau$-\emph{stage}, then we see if $\tau$ permits any realised follower $x$. If so, then the strongest such follower $x$ receives attention and is moved; all followers weaker than $x$ are cancelled, and the stage is ended. If no follower is permitted, then $\conc{\tau}{\infty}$ is next accessible. 

We explain why it is important that if $s$ is a $\tau$-\emph{stage}, and $\tau$ permits a realised follower $x$ at stage $s$, then we let $x$ move, even if $\s(x)$ is not accessible at stage $s$. For $\tau$ permitting $x$ means that $t^\tau(x)$ is extracted from $T^\tau(z)$ for some $z\in M^\tau_s(x)$. The set $W^e$ (where $\tau\in \NNN^e$) needs to decide \emph{now} whether to assign a new use for reducing $x\in E$ to $W^e$ using the same boxes, or promote $x$. If $x$ is not allowed to move, then this incident may repeat indefinitely, meaning that the use goes to infinity. After all, $W^e$ does not know if $\s(x)$ will ever be accessible again. 

Of course, if $x$ is permitted but is unrealised, then as it lies in its private box $\{z^\tau(\s(x))\}$, no weaker follower changes the value of $\psi^\tau(z)$, and so in this case $\tau$ can appoint a new use based on the same value of $\psi^\tau(z)$, which will stabilise the use. Once a follower is in $K^\tau(k)$ and not in $L^\tau(k)$ (it is moved from private to public boxes), a weaker follower $y$ such that $M^\tau(k,y)\subset M^\tau(k,x)$ may force a change in the values of $\psi^\tau(z)$ for some $z\in M^\tau(k,x)$, which lands $\tau$ in the position of having to promote $x$ if it is permitted.

\subsubsection*{Carving the boxes}

For all $\tau\in \NNN$ and $k\ge |\tau|$, let $J^\tau(k)$ be a subset of $I^\tau(k)$ of size $2^k$, and let $B^\tau(k,\seq{}) = I^\tau(k)\setminus J^\tau(k)$; so $|B^\tau(k,\seq{})| = a(k)^{k+1}$. As in the previous section, we recursively define $B^\tau(k,\alpha)$ for $\alpha\in {}^{\le k+1}a(k)$ by starting with $B^\tau(k,\seq{})$ and splitting $B^\tau(k,\alpha)$, which inductively has size $a(k)^{k+1-|\alpha|}$, into $a(k)$-many subsets $B^\tau(k,\conc{\alpha}{m})$ (for $m<a(k)$) of equal size $a(k)^{k-|\alpha|}$. Again, for $\alpha,\beta\in {}^{\le k+1}a(k)$, if $\alpha\subseteq \beta$ then $B^\tau(k,\alpha)\supseteq B^\tau(k,\beta)$, and if $\alpha\perp \beta$ then $B^\tau(k,\alpha)\cap B^\tau(k,\beta) = \emptyset$. For all $\alpha\in {}^{\le k+1}a(k)$, $B^\tau(k,\alpha)\cap J^\tau(k)= \emptyset$. 

Recall that $\Theta^\tau(k)$ is a set (possibly empty) of nodes $\s\in \PPP$ of length $k$; so $|\Theta^\tau(k)|\le 2^k = |J^\tau(k)|$. We fix an injection $\s\mapsto z^\tau(\s)$ of $\Theta^\tau(k)$ into $J^\tau(k)$. 

Let $\tau\in \NNN$. For any follower $x$, alive at the end of stage $s$, such that $\tau\in R_s(x)$, either 
\begin{itemize}
	\item $M^\tau_s(x) = \left\{ z^\tau(\s(x))  \right\}$ ($x$ resides in $\s$'s private box at stage $s$); in this case $\tau = \top_s(x)$ and $s$ has not moved since it was appointed; or
	\item $M^\tau_s(x) = B^\tau(k,\alpha)$ for some string $\alpha\in {}^{\le k+1}a(k)$; we denote this string by $\alpha^\tau_s(x)$. 
\end{itemize}

Suppose that $s$ is a $\tau$-\emph{stage}. Once we know which followers are alive at the end of the stage, for $k\ge |\tau|$ we then define a string $\beta^\tau_s(k)$:
\begin{itemize}
	\item If there is a follower $y$, alive at the beginning and the end of stage $s$, such that $\top_{s-1}(y)= \tau$, $k^\tau_{s-1}(y) = k$, and $M^\tau_{s-1}(y) \nsubset J^\tau(k)$ (in the notation of the justifications, $y\in \bar{K^\tau_s}(k)$), then we let $\beta^\tau_s(k) = \alpha^\tau_{s-1}(y)$ for the weakest such follower $y$. 
	\item If there is no such follower $y$, then we let $\beta^\tau_s(k) = \seq{}$. 
\end{itemize}

The main bookkeeping lemma is:

\begin{lemma}\label{lem_M_bookkeeping}
	If $\tau\in \NNN$ and $s$ is a $\tau$-\emph{stage}, then for all $k\ge |\tau|$, $|\beta^\tau_s(k)|\le k$, and there is some $m<a(k)$ such that for all followers $y$ which are alive ay both the beginning and the end of stage $s$, if $\tau\in R_s(y)$, $k^\tau_{s-1}(y) = k$, and $M^\tau_{s-1}(y)\nsubset J^\tau(k)$, then $\alpha^\tau_{s-1}(y)\ne \conc{\beta^\tau_s(k)}{m}$. We let $m^\tau_s(k)$ be the least such $m$. 
\end{lemma}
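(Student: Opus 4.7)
The plan is to mirror the book-keeping analysis of Section~2, lifting each structural and counting lemma from the two-oracle case to the tree setting. First, for $\tau\in\NNN$ and $k\ge|\tau|$ define the analogues of the sets $K,L,G$: let $K^\tau_s(k)$ be the followers $x\in F_s$ with $\tau=\top_s(x)$, $k^\tau_s(x)=k$, and $M^\tau_s(x)\nsubset J^\tau(k)$; let $L^\tau_s(k)$ be those with $\tau=\top_s(x)$, $k^\tau_s(x)=k$, $M^\tau_s(x)\subset J^\tau(k)$; let $G^\tau_s(k)$ be those with $\tau\in R_s(x)\setminus\{\top_s(x)\}$ and $k^\tau_s(x)=k$. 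The analogue of Lemma~\ref{lem_MP_k_is_monotone} says that when a follower receives attention only the coordinate $k^{\top_{s-1}(x)}$ drops (by 1), so $\Par^\tau(k,x)$ is well defined. The injectivity of $\alpha^\tau(k,\cdot)$ on $KG^\tau_s(k)$, the forest structure of $O^\tau_s(k)$ with leaves in $G^\tau_s(k)$, and the nesting of $K$-boxes along priority (analogues of Lemmas~\ref{lem_alpha_i_k_x_is_injective}--\ref{lem_MP_occupied_K_boxes_are_comparable}) all transfer verbatim, because their proofs use only the priority ordering together with the recipe $\alpha^\tau(k,x)=\conc{\beta^\tau_t(k)}{m^\tau_t(k)}$ at the stage $t$ when $x$ enters $G^\tau(k)$.

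Next, transfer the impermissiveness lemmas. The crucial point is that since $s$ is a $\tau$-\emph{stage}, the trace $T^\tau$ has caught up with $\psi^\tau$ on all inputs mentioned by the previous $\tau$-\emph{stage}; so the argument of Lemma~\ref{lem_MP_if_untraced_then_permitted} gives: if $t^\tau_{s-1}(x)\notin T^\tau_s(z)$ for some $z\in M^\tau_{s-1}(x)$ with $\tau=\top_{s-1}(x)$, then $x$ is permitted at stage~$s$. Lemma~\ref{lem_MP_no_changes_to_psi} transfers similarly (a weaker follower can change $\psi^\tau(z)$ only by entering $G^\tau$ with that $z$, which is incompatible with the leaf structure of $O^\tau_s(k)$). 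Combining these yields the analogue of Lemma~\ref{lem_MP_followers_in_KLG_are_traced}: every $x\in\bar{KLG^\tau_s}(k)$ has $t^\tau(k,x)\in T^\tau_s(z)$ for all $z\in M^\tau(k,x)$. From this, $|K^\tau_s(k)|\le k$ (the $t^\tau(k,x)$ are distinct elements of $T^\tau_s(z)$ for any $z$ in the common sub-box $M^\tau(k,\max K^\tau_s(k))$, and $|T^\tau_s(z)|\le h^e(z)\le k$), which in turn gives $|\beta^\tau_s(k)|\le k$ by the forest/nesting structure. Moreover $|L^\tau_s(k)|\le 2^k(k+1)$, since each of the $|\Theta^\tau(k)|\le 2^k$ private boxes $\{z^\tau(\s)\}$ hosts at most $k+1$ followers (by the trace bound plus one new appointment).

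The counting of $|G^\tau_s(k)|$ is where the new content lies. A follower $x\in G^\tau_s(k)$ has some $\rho=\top_s(x)$ in $R_s(x)\setminus\{\tau\}$; since $R_s(x)$ is a chain of initial segments of $\s(x)$, and only $k^{\top}$ drops on moves, tracking the move history of $x$ (starting from its appointment, where all coordinates start at $|\s(x)|$) forces $|\rho|\le k+1$ and $k^\rho_s(x)\in\{k,k+1\}$. There are at most $2^{k+2}$ such $\rho$'s on the (at most binary) tree, and each such $\rho$ contributes at most $|K^\rho_s(k')|+|L^\rho_s(k')|\le (k+2)(1+2^{k+1})$ followers to $G^\tau_s(k)$. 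Summing gives $|G^\tau_s(k)|\le 2^{k+2}(k+2)(1+2^{k+1})=a(k)-1$. To finish, observe that any $m<a(k)$ with $\conc{\beta^\tau_s(k)}{m}\in\bar{O^\tau_s}(k)$ must correspond to some $x\in G^\tau_{s-1}(k)$ (a $K^\tau_{s-1}(k)$-element would have $\alpha^\tau(k,x)\subseteq\alpha^\tau(k,\max\bar{K^\tau_s}(k))=\beta^\tau_s(k)$ by the nesting lemma, a contradiction), and $|G^\tau_{s-1}(k)|<a(k)$ leaves an $m$ free.

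The main obstacle is the $G$-bound: establishing the invariants that $|\top_s(x)|\le k+1$ and $k^{\top_s(x)}_s(x)\le k+1$ whenever $x\in G^\tau_s(k)$. This requires an inductive analysis on the sequence of moves suffered by $x$, using that each move decreases exactly one coordinate $k^{\rho}$ and flips $\top$ to another element of $R_s(x)$, and that $x$'s appointment must have occurred at a $\s$-stage with $\s\supseteq\conc{\tau}{\infty}$ so that the initial coordinates interact predictably with $|\tau|$ and $k$. Everything else is a routine transcription of the two-oracle argument, with the $i,1-i$ dichotomy replaced by a walk on the chain $R_s(x)$ in the tree.
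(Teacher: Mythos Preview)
Your proposal is correct and follows essentially the same route as the paper: transfer the structural lemmas (injectivity, forest, nesting, tracing) verbatim from the two-oracle case, then bound $|K^\tau_s(k)|$, $|L^\tau_s(k)|$, and $|G^\tau_s(k)|$ to conclude. The one place where the paper is slightly cleaner is your ``main obstacle'': rather than tracking the move history ad hoc, the paper isolates a single invariant (Lemma~\ref{lem:k and k-top}), proved by induction on $s$, stating that for every $\rho\in R_s(x)$ one has $k^\rho_s(x)=k^{\top_s(x)}_s(x)$ if $\rho\subseteq\top_s(x)$ and $k^\rho_s(x)=k^{\top_s(x)}_s(x)-1$ if $\top_s(x)\subsetneq\rho$; this immediately gives $k^{\top_s(x)}_s(x)\in\{k,k+1\}$ and $|\top_s(x)|\le k+1$ for $x\in G^\tau_s(k)$, exactly as you need.
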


\subsubsection*{Construction}

At stage $s>0$ we recursively define the finite path of nodes which are accessible at stage $s$. The root $\seq{}$ is always accessible.

\medskip

First, suppose that a node $\tau\in \NNN$ is accessible at stage $s$. We check to see if $s$ is a $\tau$-\emph{stage} or not, as described above. If not, and $|\tau|<s$, then we let $\conc{\tau}{\finn}$ be accessible next; if $|\tau|=s$ then we end the stage, and cancel all followers for nodes that lie to the right of $\tau$. If $s$ is a $\tau$-\emph{stage}, but there is no realised follower which $\tau$ permits at stage $s$, then we let $\conc{\tau}{\infty}$ be accessible next (unless $|\tau|=s$, when we end the stage and cancel followers for nodes that lie to the right of $\tau$). Suppose, then, that $\tau$ permits realised followers at stage $s$. Let $x$ be the strongest realised follower which is permitted by $\tau$ at stage $s$. We cancel all followers weaker than $x$. We then promote $x$ as follows:

\medskip

\noindent {\textbf{1.}} If $R_{s-1}(x) = \{\tau\}$, this means that only $\tau$ cares about $x$; since $\tau$ has just permitted $x$, we enumerate $x$ into $E$. If $\s(x)\in \PPP^e$ then the requirement $P^e$ is now satisfied, so we cancel all followers for $\s(x)$. 
	
\medskip

\noindent {\textbf{2.}} If $\tau$ is not the only element of $R_{s-1}(x)$, but $k^\tau_{s-1}(x) = |\tau|$, then $\tau$ can promote $x$ no more, and so gives it open permission: we let $R_s(x) = R_{s-1}(x)\setminus\{\tau\}$. The parameters $k^\rho_s(x)$, $M^\rho_s(x)$ and $t^\rho_s(x)$ remain unchanged for all $\rho\in R_s(x)$. We choose a new value for $\top(x)$: we let $\top_s(x)$ be the longest $\rho\in R_s(x)$ for which $k^\rho_s(x)$ is maximal among the elements of $R_s(x)$. 
	
\medskip

\noindent {\textbf{3.}}	
 Otherwise, we let $\tau$ promote $x$. We let $R_s(x) = R_{s-1}(x)$, and $k^\tau_s(x) = k^\tau_{s-1}(x)-1$. We let $M^\tau_s(x) = B^\tau(k, \conc{\beta^\tau_s(k)}{m^\tau_s(k)})$ for $k = k^\tau_s(x)$. We let $t^\tau_s(x) = s$. We define $\psi^\tau_s(z) = s$ for all $z\in M^\tau_s(x)$. The parameters $k^\rho_s(x)$, $M^\rho_s(x)$ and $t^\rho_s(x)$ remain unchanged for other nodes $\rho\in R_s(x)$. We do, however, pick a new value for $\top(x)$ as we did in the second case: we let $\top_s(x)$ be the longest $\rho\in R_s(x)$ for which $k^\rho_s(x)$ is maximal among the elements of $R_s(x)$. 

\medskip

We then end the stage. 

\medskip

Suppose now that $\s\in \PPP$ is accessible at stage $s$. If there is some follower for $\s$ which is still unrealised at stage $s$, then $\s$ takes no action, and lets its only child be accessible (unless $|\s|=s$, in which case we end the stage). Otherwise, $\s$ appoints a new follower $x$, of large value. We cancel all followers for all nodes weaker than $\s$. We then set up $x$'s parameters as follows:

\begin{itemize}
	\item We let $R_s(x)$ be the collection of all nodes $\tau\in \NNN$ such that $\conc{\tau}{\infty}\subseteq \s$. By the way we distributed the requirements on the tree, we see that $R_s(x)$ is nonempty. 
	\item We let $\top_s(x)$ be the longest node in $R_s(x)$. 
	\item For all $\tau\in R_s(x)$, we let $k^\tau_s(x) = |\s|$. Note, of course, that as $\tau\subset \s$ we get $k^\tau_s(x)\ge |\tau|$. 
	\item We let $t^\tau_s(x) = s$ for all $\tau\in R_s(x)$. 
	\item For $\tau = \top_s(x)$, we let $M^\tau_s(x) = \{ z^\tau(\s)\}$, the singleton subset of $J^\tau(|\s|)$ which is reserved for $\s$. 
	\item For $\tau\in R_s(x)\setminus \{\top_s(x)\}$ we let $M^\tau_s(x) = B^\tau(|\s|,\conc{\beta^\tau_s(|\s|)}{m^\tau_s(|\s|)})$. 
\end{itemize}

We then end the stage. 

\subsection{Justification}

We need to prove Lemma \ref{lem_M_bookkeeping} to show that the construction can be performed as prescribed. Much of the argument mimics the argument given in the previous section, and so we give the definitions and notation, and then only highlight the new ingredients. We start though by tracking the possible combinations for the function $\tau\mapsto k^\tau_s(x)$ on $R_s(x)$.

For $\s\in \PPP$, we let $F^\s_s$ be the collection of followers for $\s$ which are alive at the end of stage $s$. We let $F_s = \bigcup_{\s\in \PPP}F^\s_s$. 

We note that if $r<s$ and $x\in F_r\cap F_s$, then $R_s(x)\subseteq R_r(x)$. If $\tau\in R_s(x)$ and $k^\tau_s(x) = k^\tau_r(x)$, then $M^\tau_s(x) = M^\tau_r(x)$ and $t^\tau_s(x) = t^\tau_r(x)$. Hence for $k= k^\tau_s(x)$ we let $M^\tau(k,x) = M^\tau_s(x)$ and $t^\tau(k,x) = t^\tau_s(x)$. If $M^\tau(k,x)\ne \{z^\tau(\s(x))\}$, then we let $\alpha^\tau(k,x) = \alpha^\tau_s(x)$.

\begin{lemma}\label{lem:k and k-top}
	Let $x\in F_s$, and let $k = k^{\top_s(x)}_s(x)$. Let $\tau\in R_s(x)$. 
	\begin{enumerate}
		\item If $\tau\subseteq \top_s(x)$, then $k^\tau_s(x)  = k$. 
		\item If $\top_s(x)\subsetneq \tau$, then $k^\tau_s(x) = k-1$. 
	\end{enumerate}
	Also, if $\tau\in R_{s-1}(x)\setminus R_s(x)$, then $\tau$ is the longest string in $R_{s-1}(x)$. 
\end{lemma}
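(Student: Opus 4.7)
The plan is to prove all three assertions together by induction on $s$, with the induction carried through the cases of the construction. A useful preliminary observation is that $R_s(x)$ consists entirely of nodes $\tau\in\NNN$ with $\conc{\tau}{\infty}\subseteq\s(x)$, and hence any two elements of $R_s(x)$ are comparable under $\subseteq$; so $\top_s(x)$ sits somewhere inside this chain, and clauses (1) and (2) jointly cover every possibility.

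For the base case I would handle the stage $s$ at which $\s=\s(x)$ appoints $x$: the construction sets $k^\tau_s(x)=|\s|$ for every $\tau\in R_s(x)$ and lets $\top_s(x)$ be the longest element of $R_s(x)$. So every $\tau\in R_s(x)$ satisfies $\tau\subseteq\top_s(x)$ and $k^\tau_s(x)=|\s|=k$, making (1) trivially true and (2) vacuous. If $x$ is alive at stage $s$ but does not receive attention, all parameters are inherited from stage $s-1$ and the statement is preserved directly by induction.

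The substantive step is when $x$ receives attention. Case 1 enumerates $x$ into $E$, so $x\notin F_s$ and there is nothing to check. In case 2, with $\tau=\top_{s-1}(x)$ and $k^\tau_{s-1}(x)=|\tau|$, I would first derive the ``last clause'' of the lemma: by the inductive version of (2) any $\rho\in R_{s-1}(x)$ strictly extending $\tau$ would have $k^\rho_{s-1}(x)=|\tau|-1$, which violates the constraint $k^\rho_{s-1}(x)\ge|\rho|>|\tau|$ baked into the construction; hence $\tau$ is the longest element of $R_{s-1}(x)$. Consequently $R_s(x)=R_{s-1}(x)\setminus\{\tau\}$ consists entirely of $\rho\subsetneq\tau$ with $k^\rho_s(x)=k^\rho_{s-1}(x)=|\tau|$ (by the inductive version of (1)). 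Since the construction resets $\top_s(x)$ to the longest $\rho\in R_s(x)$ with maximal $k^\rho_s(x)$, it simply becomes the longest element of $R_s(x)$, and setting $k=|\tau|$ makes (1) hold with (2) vacuous. In case 3, $k^\tau_s(x)=k_0-1$ where $k_0=k^\tau_{s-1}(x)$, and by induction every $\rho\subseteq\tau$ in $R_{s-1}(x)$ has $k^\rho_{s-1}(x)=k_0$ while every $\rho\supsetneq\tau$ has $k^\rho_{s-1}(x)=k_0-1$. After the stage, $\rho\subsetneq\tau$ still have $k^\rho_s(x)=k_0$ (the maximum), and $\rho=\tau$ or $\rho\supsetneq\tau$ have $k^\rho_s(x)=k_0-1$; so the reset $\top_s(x)$ becomes the longest $\rho\in R_s(x)$ with $\rho\subsetneq\tau$, yielding $k=k_0$ and making both (1) and (2) immediate.

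The only non-routine point is the first half of case~2, where the ``longest string'' conclusion must be extracted from the constraint $k^\rho_s(x)\ge|\rho|$ rather than from anything explicit in the construction; everything else is a direct translation of the construction's instructions through the inductive hypothesis.
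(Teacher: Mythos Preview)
Your approach is essentially the same as the paper's: induction on $s$ through the cases of the construction, with the ``longest string'' clause derived from clause (2) applied at stage $s-1$ together with the constraint $k^\rho(x)\ge|\rho|$. The paper organises this slightly differently (it isolates the last clause first and then proves (1)--(2)), but the substance is identical.

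There is one omission in your Case~3. You write that after the decrement, the maximum of $k^\rho_s(x)$ is $k_0$, attained precisely by the $\rho\subsetneq\tau$, and hence $\top_s(x)$ is the longest such $\rho$ with $k=k_0$. This tacitly assumes that some $\rho\subsetneq\tau$ lies in $R_s(x)$, i.e.\ that $\tau=\top_{s-1}(x)$ is not the shortest element of $R_s(x)$. When $\tau$ \emph{is} shortest, every $\rho\in R_s(x)$ has $k^\rho_s(x)=k_0-1$, the maximum is $k_0-1$, and the construction sets $\top_s(x)$ to the longest element of $R_s(x)$; then $k=k_0-1$, clause (1) holds for all $\rho$, and (2) is vacuous. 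The paper splits Case~3 into exactly these two sub-cases. The fix is one sentence, but as written your argument asserts $k=k_0$ unconditionally, which is false in this sub-case.
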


\begin{proof}
	The second part of the lemma follows from the first. Suppose that $\tau\in R_{s-1}(x)\setminus R_s(x)$; so $\tau = \top_{s-1}(x)$ and $x$ is promoted at stage $s$. We have $k = k^\tau_{s-1}(x)= |\tau|$. Suppose that $\rho\in R_{s-1}(x)$. If $\tau\subsetneq \rho$, then by the first part of the lemma, applied at stage $s-1$, we have $k^\rho_{s-1}(x) = k-1$, which is smaller than $|\rho|$, which is impossible. 
	
	Now we prove the first part of the lemma, by induction on $s$. 
	
	If $x$ is appointed at stage $s$, then for all $\tau\in R_s(x)$, $k^\tau_s(x) = k = |\s(x)|$, and $\top_s(x)$ is the longest element of $R_s(x)$. 
	
	Suppose that $x$ is promoted at stage $s$. Let $\bar \tau = \top_{s-1}(x)$ and let $\bar k = k^{\bar \tau}_{s-1}(x)$. For all $\tau\in R_s(x)$ different from $\bar \tau$ we have $k^\tau_s(x) = k^\tau_{s-1}(x)$. 
	
	If $\bar\tau\notin R_s(x)$, then by the second part of the lemma, $\bar \tau$ is the longest node in $R_{s-1}(x)$. Hence for all $\tau\in R_s(x)$ we have $k^\tau_s(x) = \bar k$, and $\top_s(x)$ is chosen to be the longest string in $R_s(x)$. 
	
	Suppose then that $\bar \tau\in R_s(x)$, so $R_s(x)= R_{s-1}(x)$. We set $k^{\bar \tau}_s(x) = \bar k-1$. There are two cases. 
	\begin{itemize}
		\item If $\bar \tau$ is the shortest string in $R_s(x)$, then for all $\tau\in R_s(x)$ we have $k^{\tau}_s(x) = \bar k-1$; we then choose $\top_s(x)$ to be the longest node in $R_s(x)$. 
		\item Otherwise, for $\tau\in R_s(x)$, if $\tau\supseteq \bar \tau$ then $k^\tau_s(x) = \bar k -1$, and if $\tau\subsetneq \bar \tau$ then $k^\tau_s(x) = \bar k$. We choose $\top_s(x)$ to be the immeidate predecessor of $\bar \tau$ in $R_s(x)$. 
	\end{itemize}
\end{proof}

In particular, Lemma \ref{lem:k and k-top} shows that if $x$ is promoted at stage $s$, then $\top_s(x)\ne \top_{s-1}(x)$. 

\

Let $\tau\in \NNN$, $k\ge |\tau|$ and $s<\w$. 
\begin{itemize}
	\item We let $K^\tau_s(k)$ be the collection of followers $x\in F_s$ such that $\tau = \top_s(x)$, $k = k^\tau_s(x)$, and $M^\tau(k,x)\ne \{z^\tau(\s(x))\}$. 
	\item We let $L^\tau_s(k)$ be the collection of followers $x\in F_s$ such that $\tau = \top_s(x)$, $k = k^\tau_s(x)$, and $M^\tau(k,x) = \{z^\tau(\s(x))\}$.
	\item We let $G^\tau_s(k)$ be the collection of followers $x\in F_s$ such that $\tau\in R_s(x)$, $k = k^\tau_s(x)$, but $\tau \ne \top_s(x)$. 	
\end{itemize}

Again we use the notation $KG^\tau_s(k)$ to denote the union $K^\tau_s(k)\cup G^\tau_s(k)$, and the notation $\bar{K^\tau_s}(k) = K^\tau_{s-1}(k)\cap K^\tau_s(k)$ etc. Again, $KLG^\tau_s(k)$ is the collection of followers $x\in F_s$ such that $\tau\in R_s(x)$ and $k= k^\tau_s(x)$, $KL^\tau_s(k)$ is the collection of followers $x\in KLG^\tau_s(k)$ such that $\tau = \top_s(x)$, and $KG^\tau_s(k)$ is the collection of followers $x\in KLG^\tau_s(k)$ such that $\alpha^\tau(k,x)$ is defined. 

The following lemma translates the construction into this terminology:

\begin{lemma}\label{lem_MA_construction_lemma}
	Let $x\in KLG^\tau_s(k)$. Let $t = t^\tau(k,x)$. The stage $t$ is the least stage at which $x\in KLG^\tau(k)$. At stage $t$, $x$ is placed into $LG^\tau(k)$. 
	\begin{itemize}
		\item If $x$ was appointed at stage $t$, then $x$ is placed into $L^\tau_t(k)$ if $\tau = \top_t(x)$, and into $G^\tau_t(k)$ if not. 
		\item Otherwise, $x$ is realised at stage $t$, and $x$ is added to $G^\tau_t(k)$. 
	\end{itemize}
Unless $x\in L^\tau_t(k)$, at stage $t$ we let $M^\tau(k,x) = B^\tau(k, \conc{\beta^\tau_t(k)}{m^\tau_t(k)})$. The string $\beta^\tau_t(k)$ is defined as follows:
\begin{itemize}
	\item If $\bar{K^\tau_t}(k) = \emptyset$, then $\beta^\tau_t(k) = \seq{}$. 
	\item If $\bar{K^\tau_t}(k) \ne \emptyset$, then $\beta^\tau_t(k) = \alpha^\tau(k, \max \bar{K^\tau_t}(k))$. 
\end{itemize}
The number $m^\tau_t(k)$ is the least $m<a(k)$ such that for all $y\in \bar{KG^\tau_t}(k)$, $\alpha^\tau(k,y)\ne \conc{\beta^\tau_t(k)}{m^\tau_t(k)}$. 
\end{lemma}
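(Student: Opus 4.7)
The proof is essentially a translation lemma, so my plan is to mirror the proof strategy of Lemma \ref{lem_MP_KLG_and_t} from the minimal pair case, making the necessary adjustments to accommodate the tree-of-strategies structure and the possibility that $\top_s(x)$ can change without $k^\tau(x)$ changing.

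First, I would verify that $t = t^\tau(k,x)$ is the least stage $r$ with $x\in KLG^\tau_r(k)$. The parameter $t^\tau_s(x)$ is only (re)defined when a new value is placed into $\psi^\tau$ at positions in $M^\tau_s(x)$: namely at the appointment of $x$, and thereafter only when $\tau$ promotes $x$ in case~3 of the construction. By Lemma~\ref{lem:k and k-top}, a promotion by $\tau$ strictly decreases $k^\tau(x)$, so the stage at which $x$ first enters $KLG^\tau(k)$ is indeed the (unique) stage at which $t^\tau_s(x)$ receives the value matching $k$. Conversely, between two such redefinitions the parameters $M^\tau_s(x)$ and $t^\tau_s(x)$ are frozen, so $x$ stays in $KLG^\tau(k)$ throughout.

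Next, I would handle the placement of $x$ into $LG^\tau(k)$ at stage $t$ by casing on how $x$ enters $KLG^\tau(k)$. If $x$ is appointed at stage $t$ by $\s = \s(x)$, then for $\tau = \top_t(x)$ (the longest node of $R_t(x)$) the construction sets $M^\tau_t(x) = \{z^\tau(\s)\}$, placing $x$ into $L^\tau_t(k)$; for any other $\tau\in R_t(x)$ we have $\tau\ne \top_t(x)$ and $M^\tau_t(x)$ is chosen as a $B$-box, placing $x$ into $G^\tau_t(k)$. If instead $x$ enters via a promotion, then $x$ must be a realised follower permitted at stage $t$; by Lemma~\ref{lem:k and k-top}, a promotion by $\bar\tau = \top_{t-1}(x)$ strictly decreases $k^{\bar\tau}(x)$, changes $\top(x)$ away from $\bar\tau$, and leaves $k^\rho(x)$ unchanged for all other $\rho\in R_t(x)$. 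Hence the only way $x$ can newly enter $KLG^\tau(k)$ at stage $t$ is with $\tau = \bar\tau$ acting as the promoter, in which case the new $M^\tau_t(x)$ is a $B$-box and $\top_t(x)\ne \tau$, so $x$ is placed into $G^\tau_t(k)$ (and $L^\tau_t(k)$ is impossible, as private boxes are only assigned upon appointment).

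Finally, for the formulas defining $\beta^\tau_t(k)$ and $m^\tau_t(k)$, I would simply unpack the definition of $\beta^\tau_s(k)$ given just before Lemma~\ref{lem_M_bookkeeping}. At a $\tau$-\emph{stage} $t$ the set $\bar{K^\tau_t}(k)$ consists of those $y\in K^\tau_t(k)$ which survived the cancellations at stage $t$ without receiving attention, and by Lemma~\ref{lem_MP_marker_size_obeys_priority} (whose proof transfers verbatim to the tree setting) the ordering of such followers by priority agrees with the ordering by $t^\tau(k,\cdot)$, making $\max \bar{K^\tau_t}(k)$ well-defined. The definition of $m^\tau_t(k)$ as the least $m<a(k)$ with $\conc{\beta^\tau_t(k)}{m}\notin \{\alpha^\tau(k,y) : y\in \bar{KG^\tau_t}(k)\}$ is exactly what the construction uses when selecting $M^\tau_t(x)$ via Lemma~\ref{lem_M_bookkeeping}. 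The only mild subtlety is the justification that this choice is consistent across the appointment case and the promotion case; both cases invoke the same formula in the construction, so no separate argument is needed. Since each item is a direct reading of the instructions, I expect no real obstacle — the lemma is recordkeeping, not content — but the main point of care is ensuring that the case analysis in the promotion step (cases 1, 2, 3 of the construction) correctly matches against when $x$ actually enters the various sets $K$, $L$, $G$, which follows cleanly from Lemma~\ref{lem:k and k-top}.
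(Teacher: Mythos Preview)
Your proposal is correct and essentially matches the paper's treatment: the paper states this lemma without proof, as it is meant to be a direct restatement of the construction in the new notation (the analogue of Lemma~\ref{lem_MP_KLG_and_t}, which is likewise unproved). Your careful unpacking---tracking when $t^\tau_s(x)$ is (re)defined, casing on appointment versus promotion, and invoking Lemma~\ref{lem:k and k-top} to see that only the promoter $\bar\tau=\top_{t-1}(x)$ can cause $x$ to newly enter $KLG^\tau(k)$---is exactly the verification the paper leaves implicit.
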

 
\

The arguments of the justification for the minimal pair construction of the previous section now carry through, where $i<2$ is replaced by $\tau\in \NNN$ and \emph{stage} is replaced by $\tau$-\emph{stage}. This gives us analogues of all lemmas from Lemma \ref{lem_MP_marker_size_obeys_priority} to Lemma \ref{lem_MP_bound_on_K}, including the first part of  Lemma \ref{lem_M_bookkeeping}, that $|\beta^\tau_s(k)|\le k$.

\begin{lemma}\label{lem_MA_bound_on_L}
	For all $\s\in \Theta^\tau(k)$, $\left| F^\s_s\cap L^\tau_s(k)\right| \le k+1$.
\end{lemma}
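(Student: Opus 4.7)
The plan is to follow the argument of Lemma \ref{lem_MP_bound_on_L}, adapted to this tree setting. Fix $\s\in \Theta^\tau(k)$. Every follower $x\in F^\s_s\cap L^\tau_s(k)$ satisfies $M^\tau(k,x) = \{z^\tau(\s)\}$; write $z = z^\tau(\s)$. Since $z\in I^\tau(k)$ and $\tau\in \NNN^e$, the definition of $h^e$ gives $h^e(z)\le k$, so $|T^\tau_r(z)|\le k$ at every stage $r$.

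First I would treat the case where $s$ is itself a $\tau$-\emph{stage}. The analogue of Lemma \ref{lem_MP_followers_in_KLG_are_traced}, which the author states carries over wholesale (with $\tau$-\emph{stages} in place of \emph{stages}), guarantees that for every $x\in \bar{L^\tau_s}(k)$ one has $t^\tau(k,x)\in T^\tau_s(z)$. Combined with the analogue of Lemma \ref{lem_MP_marker_size_obeys_priority}, which makes $x\mapsto t^\tau(k,x)$ injective on $F_s$, this gives $|\bar{L^\tau_s}(k)\cap F^\s|\le |T^\tau_s(z)|\le k$. At most one new follower is appointed at stage $s$ (namely by $\s$, if accessible), so at most one new element enters $L^\tau_s(k)\cap F^\s$, yielding the bound $k+1$.

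For arbitrary $s$, I would reduce to the preceding case by letting $s^*$ be the greatest $\tau$-\emph{stage} with $s^*\le s$ (or $s^*=0$), and showing that $|F^\s_r\cap L^\tau_r(k)|$ cannot increase as $r$ ranges from $s^*$ to $s$. A new element enters $F^\s\cap L^\tau(k)$ only when $\s$ appoints a new follower, which requires $\s$ to be accessible; since $\s$ extends $\conc{\tau}{\infty}$, this in turn requires $\conc{\tau}{\infty}$ to have been declared accessible, which by the recursive accessibility rules happens only at $\tau$-\emph{stages} where $\tau$ permits no realised follower. Hence no additions occur strictly between $\tau$-\emph{stages}; only cancellations may occur, which do not enlarge the set. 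Thus $|F^\s_s\cap L^\tau_s(k)|\le |F^\s_{s^*}\cap L^\tau_{s^*}(k)|\le k+1$.

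The most delicate point, which I would double-check against the formal construction, is the bookkeeping claim that all action by $\s$ is confined to $\tau$-\emph{stages}. This is essentially immediate from the recursive rules for accessibility—descending past $\tau$ on its $\infty$-outcome forces the current stage to be a $\tau$-\emph{stage} at which no realised follower is permitted by $\tau$—but it is the single ingredient that did not appear in the minimal-pair argument, and it is where the tree-structured construction genuinely differs from the two-oracle one.
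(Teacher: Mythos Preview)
Your proof is correct and follows essentially the same approach as the paper, which simply states that the argument is identical to that of Lemma~\ref{lem_MP_bound_on_L}. You have in fact spelled out more carefully than the paper the reduction from an arbitrary $s$ to a $\tau$-\emph{stage}; in the minimal-pair setting this was trivial because nothing at all happens between \emph{stages}, whereas here your observation that $\s$ can only be accessible at $\tau$-\emph{stages} (so no additions to $F^\s\cap L^\tau(k)$ occur in between) is exactly the right justification.
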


\begin{proof}
	Identical to the proof of Lemma \ref{lem_MP_bound_on_L}.
\end{proof}

Let $b(k) = (k+1)(1+2^k)$. So $a(k) = 1+2^{k+2}\cdot b(k+1)$. Since $\left|\Theta^\tau(k)\right|\le 2^k$, Lemma \ref{lem_MA_bound_on_L} and the analogue of Lemma \ref{lem_MP_bound_on_K} tell us that $\left| KL^\tau_s(k)\right| \le b(k)$. 

\begin{lemma}\label{lem_MA_bound_on_G}
	$\left| G^\tau_s(k) \right| < a(k)$.
\end{lemma}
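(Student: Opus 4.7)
The plan is to classify each $x \in G^\tau_s(k)$ by its current top-node $\rho := \top_s(x)$ and to reduce everything to the already-known bound $|KL^\rho_s(k')| \leq b(k')$ that was stated immediately before the lemma. The first observation is that $R_s(x)$ is linearly ordered by $\subseteq$: by construction, every $\sigma \in R_s(x)$ satisfies $\conc{\sigma}{\infty} \subseteq \s(x)$, so all members of $R_s(x)$ sit on the unique tree-path from $\seq{}$ to $\s(x)$. Consequently $\tau$ and $\rho$ are comparable, and since $\rho \ne \tau$ (as $x \in G^\tau_s(k)$), exactly one of $\tau \subsetneq \rho$ or $\rho \subsetneq \tau$ holds.

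Next I would invoke Lemma~\ref{lem:k and k-top}. If $\tau \subsetneq \rho$, then $k^\rho_s(x) = k^\tau_s(x) = k$, so $x \in KL^\rho_s(k)$ and $|\rho| \leq k$. If $\rho \subsetneq \tau$, then $k^\rho_s(x) = k^\tau_s(x) + 1 = k+1$, so $x \in KL^\rho_s(k+1)$ and $|\rho| < |\tau|\leq k$. Hence
\[ G^\tau_s(k) \;\subseteq\; \bigcup_{\substack{\rho \in \NNN \\ \tau \subsetneq \rho,\ |\rho|\leq k}} KL^\rho_s(k) \;\;\cup\;\; \bigcup_{\substack{\rho \in \NNN \\ \rho \subsetneq \tau}} KL^\rho_s(k+1). \]

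Now I would count the admissible $\rho$'s. Because the tree of strategies is at most binary branching (each $\NNN$-node has two children $\infty,\finn$, each $\PPP$-node has one), the number of proper descendants of $\tau$ of length at most $k$ is bounded by $\sum_{j=1}^{k-|\tau|} 2^j < 2^{k+1}$; and the number of proper prefixes of $\tau$ is $|\tau| \leq k$. Combining these with the bound $|KL^\rho_s(k')| \leq b(k')$ gives
\[ |G^\tau_s(k)| \;\leq\; 2^{k+1}\, b(k) \;+\; k\cdot b(k+1). \]
Since $b$ is nondecreasing and $k \leq 2^{k+1}$, this is at most $(2^{k+1} + k)\, b(k+1) \leq 2^{k+2}\, b(k+1) < 1 + 2^{k+2}\, b(k+1) = a(k)$, as required.

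I do not anticipate a real obstacle here: the numerical values of $a$ and $b$ were chosen precisely to absorb this count, with the factor $2^{k+2}$ accommodating the possible choices of $\top$-node on each side of $\tau$ and the factor $b(k+1)$ coming from the inductive bounds on $|K^\rho|$ and $|L^\rho|$. The only step worth double-checking is the linear-ordering of $R_s(x)$, which is an invariant established when $x$ is appointed and preserved by the promotion rules (which only shrink $R_s(x)$).
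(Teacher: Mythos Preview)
Your argument is correct and follows the same approach as the paper: classify each $x\in G^\tau_s(k)$ by $\rho=\top_s(x)$, invoke Lemma~\ref{lem:k and k-top} to place $x$ in $KL^\rho_s(k)$ or $KL^\rho_s(k+1)$, bound the number of admissible $\rho$, and apply the $b$-bound on $|KL^\rho_s(\cdot)|$. The only difference is cosmetic: the paper does not split into the two cases $\tau\subsetneq\rho$ versus $\rho\subsetneq\tau$, but simply observes that in either case $|\rho|\le k+1$, giving at most $2^{k+2}$ choices of $\rho$ and hence $|G^\tau_s(k)|\le 2^{k+2}b(k+1)=a(k)-1$ directly; your sharper two-case count arrives at the same numerical bound after one more line of inequalities.
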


\begin{proof}
	Let $x\in G^\tau_s(k)$; let $\rho = \top_s(x)$. By Lemma \ref{lem:k and k-top}, $x\in KL^\rho_s(k)$ or $x\in KL^\rho_s(k+1)$. This implies that $|\rho|\le k+1$, so there are at most $2^{k+2}$ many possibilities for such strings $\rho$. We have seen that $|KL^\rho_s(k)|$ and $|KL^\rho_s(k+1)|$ are bounded by $b(k)$ and $b(k+1)$ respectively, and so both bounded by $b(k+1)$. Hence $|G^\tau_s(k)|\le 2^{k+2}b(k+1) = a(k)-1$. 
\end{proof}

The argument of the previous section now gives a proof of Lemma \ref{lem_M_bookkeeping}.

\subsection{Verification}

\begin{lemma}\label{lem_MA_followers_move_finitely_much}
	Every follower receives attention only finitely many times. 
\end{lemma}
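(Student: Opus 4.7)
The plan is to mimic the rank argument used in Lemma \ref{lem_MP_followers_move_finitely_much} of the minimal-pair construction, but now with $R_s(x)$ potentially shrinking as well as the individual levels $k^\tau_s(x)$ decreasing. I would assign to each live follower $x$ a nonnegative-integer \emph{rank}, and show that each act of receiving attention (other than the terminal case of enumeration into $E$) strictly decreases this rank.

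Concretely, I would define
\[
	k_s(x) = |R_s(x)| + \sum_{\tau \in R_s(x)} k^\tau_s(x),
\]
which is a nonnegative integer. First I would note that if $x \in F_{s-1}\cap F_s$ does not receive attention at stage $s$, then none of $R_s(x)$, the $k^\tau_s(x)$, the $M^\tau_s(x)$ or the $t^\tau_s(x)$ change, so $k_s(x) = k_{s-1}(x)$. Then I would check the three cases of the promotion block in the construction. In case (1), $x$ is enumerated into $E$; this can happen at most once per follower, after which $x$'s node $\s(x)$ cancels all its followers or $x$ simply ceases to act. In case (3), $R_s(x) = R_{s-1}(x)$ and only $k^{\top_{s-1}(x)}$ drops, by exactly $1$; so $k_s(x) = k_{s-1}(x) - 1$. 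In case (2), $R_s(x) = R_{s-1}(x)\setminus\{\tau\}$ where $k^\tau_{s-1}(x) = |\tau|$; the summand indexed by $\tau$ drops out, contributing $-|\tau|$, while $|R|$ drops by $1$, for a total decrease of $|\tau|+1 \ge 1$. Hence in cases (2) and (3), $k_s(x) < k_{s-1}(x)$.

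Since $k_s(x)$ is a nonnegative integer and cannot decrease below $0$, $x$ can receive attention in cases (2) and (3) only finitely many times, and case (1) occurs at most once; so $x$ receives attention at only finitely many stages.

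The only point that requires even mild care is that in case (2) the sum $\sum_\tau k^\tau$ alone need not strictly decrease (since $|\tau|$ could be $0$), which is exactly why I include the $|R_s(x)|$ summand in the rank; this is the only real design choice and the rest is routine bookkeeping against the construction.
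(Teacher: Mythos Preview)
Your proof is correct and takes essentially the same approach as the paper, which also defines a rank and shows it strictly decreases at each promotion; the paper uses simply $k_s(x)=\sum_{\tau\in R_s(x)}k^\tau_s(x)$ without the $|R_s(x)|$ summand. Your worry that $|\tau|$ could be $0$ in case (2) is in fact unfounded: since $R_{s-1}(x)$ has at least two comparable elements there and (by Lemma~\ref{lem:k and k-top}) the removed $\tau$ is the longest, necessarily $|\tau|\ge 1$; so the extra term is harmless but unnecessary.
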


\begin{proof}
	Identical to the proof of Lemma \ref{lem_MP_followers_move_finitely_much}, using $k_s(x) = \sum_{\tau\in R_s(x)}k^\tau_s(x)$. 
\end{proof}

The analogue of Lemma \ref{lem_MP_infinitely_many_stages} is the fact that the true path is infinite. Recall that a node $\rho$ lies on the true path if $\rho$ is accessible at infinitely many stages, but there are only finitely many stages at which some node that lies to the lexicographic left of $\rho$ is accessible. The true path is a linearly ordered initial segment of the tree of strategies. 

\begin{lemma}\label{lem_true_path_N}
	If $\tau\in \NNN$ lies on the true path, then one of $\tau$'s children lies on the true path as well. 
\end{lemma}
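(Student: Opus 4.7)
The plan is a case split on whether $\conc{\tau}{\infty}$ is accessible at infinitely many stages.

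If $\conc{\tau}{\infty}$ is accessible at infinitely many stages, then it immediately lies on the true path: since $\infty$ is the leftmost outcome, every node lex-left of $\conc{\tau}{\infty}$ is already lex-left of $\tau$, and such nodes are accessible only finitely often by hypothesis.

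Suppose instead $\conc{\tau}{\infty}$ is accessible at only finitely many stages; I will show $\conc{\tau}{\finn}$ lies on the true path. The nodes lex-left of $\conc{\tau}{\finn}$ are either lex-left of $\tau$ (finitely often accessible), or proper extensions of $\conc{\tau}{\infty}$ (accessible only when $\conc{\tau}{\infty}$ is, hence only finitely often). So the only thing to verify is that $\conc{\tau}{\finn}$ is accessible at infinitely many stages. Pick $s_0 > |\tau|$ so that for all $s > s_0$ no node lex-left of $\tau$ is accessible and $\conc{\tau}{\infty}$ is not accessible. At any $s > s_0$ where $\tau$ is accessible, the construction falls into exactly one of three cases: (a) $s$ is not a $\tau$-\emph{stage}, and $\conc{\tau}{\finn}$ is made accessible; (b) $s$ is a $\tau$-\emph{stage} and $\tau$ permits no realised follower, which would force $\conc{\tau}{\infty}$ to be made accessible---ruled out by the choice of $s_0$; (c) $s$ is a $\tau$-\emph{stage} and $\tau$ permits some realised follower, in which case the stage ends with no child of $\tau$ visited.

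The main obstacle is bounding case (c). The key observation is that a follower $x$ can be permitted by $\tau$ only while $\tau = \top_{s-1}(x)$, which requires $\tau \in R(x)$, which in turn forces $\conc{\tau}{\infty} \subseteq \s(x)$ (from the initialisation of $R$ at appointment, and the fact that $R$ only shrinks over time). For $\s(x)$ to appoint $x$, $\s(x)$ must be accessible at the appointment stage, hence so must $\conc{\tau}{\infty}$; by hypothesis this happens only finitely often, so only finitely many followers $x$ ever have $\tau \in R(x)$. Each such $x$ receives attention only finitely many times by Lemma~\ref{lem_MA_followers_move_finitely_much}, and every $\tau$-permission of $x$ is an attention event. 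Therefore case (c) occurs at only finitely many stages. Past the last such stage, every accessibility of $\tau$ is of type (a), so $\conc{\tau}{\finn}$ is made accessible; since $\tau$ is accessible infinitely often, so is $\conc{\tau}{\finn}$, as required.
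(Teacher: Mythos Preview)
Your proof is correct and follows essentially the same approach as the paper's: both bound the number of stages at which $\tau$ ends the stage by permitting a follower, by observing that such followers must have $\s(x)\supseteq\conc{\tau}{\infty}$, that only finitely many such followers are ever appointed (since $\conc{\tau}{\infty}$ is accessible only finitely often), and that each receives attention only finitely many times. The paper packages this as a single contradiction argument rather than your explicit case split, but the content is the same.

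One small imprecision: you write ``every $\tau$-permission of $x$ is an attention event,'' but strictly speaking, if several realised followers are permitted by $\tau$ at the same stage, only the strongest receives attention and the weaker ones are cancelled. This does not affect your conclusion, since each instance of case~(c) still corresponds to exactly one follower from your finite set receiving attention, and a follower that is permitted but cancelled never reappears.
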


\begin{proof}
	Suppose, for contradiction, that the lemma fails. This means that there is some stage $s_0$ such that for every $s\ge s_0$, if $\tau$ is accessible at stage $s$ then $s$ is a $\tau$-\emph{stage} and $\tau$ permits some follower at stage $s$. 
	
	Let $KLG^\tau_s = \bigcup_{k\ge |\tau|} KLG^\tau_s(k)$. If $x\in KLG^\tau_s \setminus KLG^\tau_{s-1}$ then some $\s\supseteq \conc{\tau}{\infty}$ is accessible at stage $s$. This means that $s<s_0$. That is, no new followers are added to $KLG^\tau_s$ after stage $s_0$. By Lemma \ref{lem_MA_followers_move_finitely_much}, each follower in $KLG^\tau_{s_0}$ receives attention finitely many times. We reach a contradiction. 	
\end{proof}

Again, for $X\in \{K,L,G\}$, let $X^\tau_\w(k) = \lim_s X^\tau_s(k)$. For each $\tau\in \NNN$, $k\ge |\tau|$ and $X\in \{K,L,G\}$, we see that $X^\tau_\w(k)$ is finite. 

\begin{lemma}\label{lem_true_path_P}
	Suppose that $\s\in \PPP^e$ lies on the true path. Then the requirement $P^e$ is met, and there is a stage after which no follower for $\PPP^e$ ever requires attention. The unique child of $\s$ also lies on the true path. 
\end{lemma}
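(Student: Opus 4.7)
The plan is to adapt the argument of Lemma \ref{lem_MP_reqs_are_met_and_fair} to the tree setting, with ``nodes on the true path strictly above $\s$ together with nodes lying to the lexicographic left of $\s$'' playing the role of the ``stronger requirements''. I argue by induction along the true path, assuming Lemmas \ref{lem_true_path_N} and \ref{lem_true_path_P} for all proper initial segments of $\s$. Since nodes lying to the lexicographic left of $\s$ are not on the true path, each is accessible only finitely often, and in particular appoints only finitely many followers in total. Combining this with the inductive hypothesis, there is a stage $s_0$ after which no node to the left of $\s$ is accessible and no proper initial segment of $\s$ appoints any new follower.

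If some follower for $\s$ is ever enumerated into $E$, then $P^e$ becomes permanently satisfied, $\s$ takes no further action, and every subsequent accessibility of $\s$ passes to $\s$'s unique child; hence $P^e$ is met and the child lies on the true path. Assume therefore that $P^e$ is never satisfied. Let $H^\s$ be the set of followers for $\s$ that are never cancelled. By Lemma \ref{lem_MA_followers_move_finitely_much}, for each $x\in H^\s$ the eventual values $R_\w(x)$ and $k^\tau_\w(x)$ (for $\tau\in R_\w(x)$) are well-defined, and so
\[ H^\s \;\subseteq\; \bigcup_{\tau\subseteq\s}\;\bigcup_{|\tau|\le k\le|\s|}\bigl(K^\tau_\w(k)\cup L^\tau_\w(k)\cup G^\tau_\w(k)\bigr); \]
the analogues of Lemmas \ref{lem_MP_bound_on_K}, \ref{lem_MA_bound_on_L} and \ref{lem_MA_bound_on_G} ensure that this union is finite, so $H^\s$ is finite.

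Now pick $s_1\ge s_0$ past every attention event of every follower in $H^{\s'}$ for every node $\s'$ at least as strong as $\s$ in the tree-priority (a proper initial segment on the true path, a node to the left of $\s$, or $\s$ itself). Such $s_1$ is finite: for $\s'\subsetneq\s$ on the true path this follows from the inductive hypothesis; for left-of-$\s$ nodes, from finite accessibility and Lemma \ref{lem_MA_followers_move_finitely_much}; for $\s'=\s$, from finiteness of $H^\s$. I claim $\s$ appoints only finitely many followers. Suppose not, and let $x$ be any follower $\s$ appoints at some stage $s>s_1$. At stage $s$, the appointment of $x$ cancels all followers for nodes weaker than $\s$ in the tree-priority (those extending $\s$ or lying to the right of $\s$), so any follower $z$ that could later cancel $x$ must belong to a node at least as strong as $\s$. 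If $z\notin H^{\s(z)}$, iterate the ``$z$ is cancelled by $z'$'' relation; this strictly increases priority among the finitely many alive followers stronger than $x$, terminating in some $z^\ast\in H^{\s'}$ for some qualifying $\s'$, whose attention event at a stage greater than $s_1$ contradicts the choice of $s_1$. Therefore $x$ is never cancelled, so $x\in H^\s$; since $\s$ would then contribute infinitely many distinct such $x$ to $H^\s$, this contradicts the finiteness of $H^\s$. Hence $\s$ appoints only finitely many followers. Let $x$ be the last one. If $x$ were realised, then all elements of $H^\s$ would be realised, and $\s$ would appoint another follower at its next accessibility, contradicting $x$'s finality; thus $x$ is unrealised, so $\vphi^e(x)\ne 0 = \chi_E(x)$ and $P^e$ is met. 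After $x$'s appointment, every accessibility of $\s$ finds the unrealised $\s$-follower $x$ and so passes accessibility to $\s$'s unique child, which therefore lies on the true path.

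The main obstacle is the well-founded descent step inside the contradiction: a follower capable of cancelling a newly-appointed $\s$-follower need not itself lie in any $H^{\s'}$, but iterating the ``cancelled by'' relation, which strictly increases priority at each step, must terminate in an $H$-follower whose attention would postdate $s_1$. This step is the tree-construction analogue of the implicit well-foundedness argument in the proof of Lemma \ref{lem_MP_reqs_are_met_and_fair}; folding in both the left-of-$\s$ nodes and the descendants cancelled at the appointment of $x$ is what makes $s_1$ a sufficient threshold.
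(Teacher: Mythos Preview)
Your proof is correct and takes essentially the same approach as the paper, which simply says to adapt the argument of Lemma~\ref{lem_MP_reqs_are_met_and_fair} by induction on $|\s|$. Your explicit well-foundedness descent (iterating the ``cancelled by'' relation to reach a permanent follower) is a clean way to handle the tree setting, where followers for left-of-$\s$ nodes may still be permitted by $\tau\subset\s$ after their own nodes stop being accessible; in the minimal-pair proof this complication does not arise because follower priority respects requirement priority, so the analogous step there is slightly more implicit.
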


\begin{proof}
	This is proved by induction on $|\s|$, using the argument of Lemma \ref{lem_MP_reqs_are_met_and_fair}.
\end{proof}

As a corollary, we see that the true path is infinite.

\begin{lemma}\label{lem_SJT_hard_on_true_path}
	Let $\tau\in \NNN^{e,c}$ be on the true path. Suppose that $T^{e,c}$ is a trace for $\psi^e$. Then $\conc{\tau}{\infty}$ lies on the true path. 
\end{lemma}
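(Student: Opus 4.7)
The plan is to argue by contradiction: assume $\conc\tau\infty$ is not on the true path. Since no node lexicographically to the left of $\conc\tau\infty$ is to the left of $\tau$, and $\tau$ is on the true path, the failure of $\conc\tau\infty$ to lie on the true path means that there is a stage $s_0$ such that at every stage $s\ge s_0$ at which $\tau$ is accessible, $\conc\tau\infty$ is not accessible, i.e.\ either $s$ is not a $\tau$-\emph{stage}, or $\tau$ permits some realised follower at stage $s$. My goal will be to show that neither of these can hold cofinitely often.

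First I will prove that there are infinitely many $\tau$-\emph{stages}. Suppose, aiming at a contradiction, that there is a last $\tau$-\emph{stage} $s^*$. Then $\psi^\tau$ is never redefined after stage $s^*$, so $\psi^\tau_{s}=\psi^\tau_{s^*}$ for all $s\ge s^*$, whence $\psi^\tau$ is the total function $\psi^\tau_{s^*}$. Since $T^{e,c}$ traces $\psi^e$ by assumption, and $\psi^\tau=\psi^e\!\rest{\w^{[\tau]}}$ with $T^\tau=T^{e,c}\!\rest{\w^{[\tau]}}$, we get $\psi^\tau_{s^*}(z)=\psi^\tau(z)\in T^\tau(z)$ for every $z\in\w^{[\tau]}$ mentioned by stage $s^*$ (only finitely many such $z$). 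Choose a stage $t>s^*$ so large that each such value is already enumerated into $T^\tau_t(z)$ via an $A$-correct computation. Since $\tau$ lies on the true path, we may choose such a $t$ at which $\tau$ is accessible. Then by the definition of $\tau$-\emph{stage}, $t$ itself is a $\tau$-\emph{stage}, contradicting maximality of $s^*$.

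Next I will show that $\tau$ permits a realised follower at only finitely many stages. Let $\mathcal A$ be the set of all followers $x$ ever appointed with $\tau\in R_{?}(x)$; by the setup, such $x$ must be appointed by some node $\s\supseteq\conc\tau\infty$, and therefore at a stage at which $\conc\tau\infty$ is accessible. By the hypothesis for contradiction, $\conc\tau\infty$ is accessible at only finitely many stages, so $\mathcal A$ is finite. By Lemma~\ref{lem_MA_followers_move_finitely_much}, each $x\in\mathcal A$ receives attention only finitely many times; since permitting $x$ at stage $s$ makes $x$ receive attention, $\tau$ permits followers at only finitely many stages altogether.

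Combining the two previous steps, at all but finitely many of the (infinitely many) stages $s$ at which $\tau$ is accessible and $s$ is a $\tau$-\emph{stage}, $\tau$ permits no realised follower, so the construction declares $\conc\tau\infty$ accessible at stage $s$. This contradicts the assumption that $\conc\tau\infty$ is accessible only finitely often. The main obstacle is the first step, which requires a careful unpacking of how the definition of $\tau$-\emph{stage} interacts with the tracing hypothesis; everything else is bookkeeping on top of Lemmas \ref{lem_MA_followers_move_finitely_much} and~\ref{lem_true_path_N}.
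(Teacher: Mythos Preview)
Your proof is correct and takes essentially the same approach as the paper, which simply says that by Lemma~\ref{lem_true_path_N} it suffices to show there are infinitely many $\tau$-\emph{stages}, and that this follows from the tracing hypothesis. Your argument spells out both reductions in detail (your step~3 in particular redoes the finiteness argument inside the proof of Lemma~\ref{lem_true_path_N}, so your proof is actually self-contained and does not need to cite that lemma); one small slip is ``$A$-correct'' where you mean ``$W^e$-correct''.
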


\begin{proof}
	By Lemma \ref{lem_true_path_N}, it suffices to show that there are infinitely many $\tau$-\emph{stages}. This follows from the fact that $T^\tau$ traces $\psi^\tau$. 
\end{proof}

\begin{lemma}\label{lem_MA_noncomputable}
	$E$ is not computable. 
\end{lemma}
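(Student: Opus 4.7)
The plan is to meet every requirement $P^e$ by exhibiting, for each $e$, some node $\s \in \PPP^e$ on the true path $\pi$, and then invoking Lemma \ref{lem_true_path_P} to conclude $E \ne \vphi^e$. Since every computable set coincides with $\vphi^e$ for some $e$, this suffices to show that $E$ is noncomputable.

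First I would verify by induction along $\pi$ that $\pi$ is infinite. The root $\seq{}$ is accessible at every stage, with no node lying to its lexicographic left, hence is on $\pi$. If $\rho \in \pi$, then Lemma \ref{lem_true_path_P} supplies the unique child of $\rho$ on $\pi$ when $\rho \in \PPP$, while Lemma \ref{lem_true_path_N} supplies one of the two children of $\rho$ on $\pi$ when $\rho \in \NNN$. Next I would show that some node $\conc{\tau}{\infty}$ with $\tau \in \NNN$ lies on $\pi$. Fix $e$ with $W^e = \emptyset'$. Then $\emptyset'$ is trivially SJT-hard: the $\Sigma^0_2$ function $\psi^e$ produced by the construction is $\emptyset'$-c.e.\ traced by the singleton trace $T(z) = \{\psi^e(z)\}$, which is bounded by every order function and therefore appears in the enumeration $\seq{T^{e,c}}_{c<\w}$ as some $T^{e,c}$. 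The requirement-assignment rule forces a negative requirement at every $\pi$-node whose proper initial segments contain no $\conc{\tau'}{\infty}$ with $\tau' \in \NNN$; as negatives are drawn in priority order from an $\w$-list, eventually $N^{e,c}$ is picked up at some $\tau \in \pi$, and then Lemma \ref{lem_SJT_hard_on_true_path} yields $\conc{\tau}{\infty} \in \pi$.

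Once $\conc{\tau}{\infty} \in \pi$ has been established, the restriction in the assignment rule is automatically satisfied at every deeper $\pi$-node, so the full $\w$-list of requirements is exhausted along $\pi$; in particular, every $P^e$ is assigned to some $\s \in \pi$. Applying Lemma \ref{lem_true_path_P} to each such $\s$ completes the proof.

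The main obstacle is the middle step: a priori, the constraint in the requirement-assignment procedure could force an unbroken sequence of negatives along $\pi$ and thereby starve every $P^e$. Circumventing this relies on confirming that $\pi$ witnesses some $\conc{\tau}{\infty}$ outcome in $\NNN$; once this is secured via the trivial SJT-hardness of $\emptyset'$ together with Lemma \ref{lem_SJT_hard_on_true_path}, exhaustion of the $\w$-list of requirements along $\pi$ and the conclusion of Lemma \ref{lem_true_path_P} become routine.
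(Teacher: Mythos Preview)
Your proposal is correct and follows essentially the same approach as the paper's proof: show the true path is infinite, use the existence of an SJT-hard c.e.\ set together with Lemma~\ref{lem_SJT_hard_on_true_path} to place some $\conc{\tau}{\infty}$ on the true path, and then conclude that every $P^e$ is assigned to a node on the true path so Lemma~\ref{lem_true_path_P} applies. The only difference is cosmetic: the paper simply cites ``the fact that SJT-hard c.e.\ sets exist,'' while you explicitly pick $W^e=\emptyset'$ and spell out why the requirement-assignment rule guarantees the relevant $N^{e,c}$ lands on the true path.
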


\begin{proof}
	We need to show that every requirement $P^e$ is met. By Lemma \ref{lem_true_path_P}, it suffices to show that for all $e$ there is some node $\s\in \PPP^e$ on the true path. This follows from the fact that the true path is infinite, and from the way we distributed requirements to nodes, once we see that there is some node $\tau\in \NNN$ such that $\conc{\tau}{\infty}$ lies on the true path. This follows from Lemma \ref{lem_SJT_hard_on_true_path}, and the fact that SJT-hard c.e.\ sets exist. 
\end{proof}

\subsubsection*{Reductions}

We turn to show that $E$ is computable from every SJT-hard c.e.\ set. Suppose that $W^e$ is SJT-hard; so there is some $c<\w$ such that $T^{e,c}$ traces $\psi^e$. As the true path is infinite, find some $\tau\in \NNN^{e,c}$ on the true path. By Lemma \ref{lem_SJT_hard_on_true_path}, $\conc{\tau}{\infty}$ lies on the true path; there are infinitely many $\tau$-\emph{stages}.

\begin{lemma}\label{lem_MA_top_will_come}
	Let $x\in F_s$ and suppose that $\tau\in R_s(x)$. If $x\in E$, then there is some stage $r\ge s$ such that $\tau = \top_r(x)$. 
\end{lemma}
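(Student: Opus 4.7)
The plan is to track what can happen to $\tau$ as an element of $R(x)$ between stage $s$ and the stage at which $x$ enters $E$, and observe that the two possible fates both force $\tau$ to occupy the top position at some intermediate stage. The key facts from the construction are: (i) $R(x)$ shrinks only in case 2 of a promotion, and the node removed from $R(x)$ at such a stage is precisely $\top_{s-1}(x)$; (ii) $x$ is enumerated into $E$ only in case 1 of a promotion, at which point $R_{s-1}(x)$ is already a singleton $\{\tau'\}$ with $\tau' = \top_{s-1}(x)$ permitting $x$.

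Let $r_0$ be the (unique) stage at which $x$ is enumerated into $E$. By the definition of case 1 and the cancellation of all followers for $\s(x)$ once $P^e$ is satisfied, $x\notin F_{r_0}$, so $r_0 > s$. Consider the following dichotomy.

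First, suppose that $\tau\in R_{r_0-1}(x)$. By (ii), $R_{r_0-1}(x)$ is a singleton, and its unique element $\tau'$ equals $\top_{r_0-1}(x)$. Hence $\tau = \tau' = \top_{r_0-1}(x)$, and $r = r_0-1\ge s$ witnesses the lemma. Otherwise, there is some least stage $r'\in (s,r_0-1]$ at which $\tau$ leaves $R(x)$, i.e.\ $\tau\in R_{r'-1}(x)\setminus R_{r'}(x)$. By (i), at stage $r'$ we are in case 2 of a promotion, and $\tau = \top_{r'-1}(x)$. Since $r' > s$, $r = r'-1 \ge s$ again witnesses the lemma.

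The main (in fact only) thing to check is that no other mechanism can silently excise $\tau$ from $R(x)$: the construction's only modification to $R(x)$ occurs in case 2, while cases 1 and 3 leave $R$ either unchanged or reduced only via the stated rule; appointment of $x$ and change of $\top(x)$ within cases 2 and 3 do not alter the rest of $R(x)$. Once this is confirmed by a direct inspection of the instructions, the dichotomy above is exhaustive and the lemma follows.
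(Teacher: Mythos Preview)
Your proof is correct and follows essentially the same approach as the paper's: both argue that at the stage $v$ of enumeration $R_{v-1}(x)=\{\top_{v-1}(x)\}$, handling the case $\tau\in R_{v-1}(x)$ directly, and otherwise locating the stage at which $\tau$ leaves $R(x)$ to conclude $\tau=\top(x)$ there. Your version simply spells out more explicitly why the removed node at a case-2 stage must be $\top_{s-1}(x)$, whereas the paper asserts this in one line.
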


\begin{proof}
	Let $v>s$ be the stage at which $x$ is enumerated into $E$. We have $R_{v-1}(x) = \{\top_{v-1}(x)\}$. If $\tau = \top_{v-1}(x)$ we are done. Otherwise, since $\tau\notin R_{v-1}(x)$, let $r\ge s$ be the last stage at which $\tau\in R_r(x)$. Then $\tau = \top_{r}(x)$. 
\end{proof}

\begin{lemma}\label{lem_MA_non_permission_non_enumeration}
	Let $s$ be a $\tau$-\emph{stage}. Let $x\in F_s$ such that $\tau\in R_s(x)$ and suppose that $v^\tau_s(x)\converge$. Suppose that $W^e\rest{v^\tau_s(x)} = W^e_s\rest{v^\tau_s(x)}$. Then $x\notin E$. 
\end{lemma}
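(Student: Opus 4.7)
The plan is to mirror the proof of Lemma~\ref{lem_MP_non_permission_non_enumeration}, with the node $\tau\in\NNN^e$ playing the role of one of the two indices $i<2$, and with $\tau$-\emph{stages} playing the role of the global \emph{stages} of the previous section. Write $v=v^\tau_s(x)$. Since $W^e$ is c.e., the hypothesis $W^e\rest v=W^e_s\rest v$ means $W^e_r\rest v=W^e_s\rest v$ for every $r\ge s$; this monotonicity is what makes the whole argument go through.

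First I would establish by induction on $r\ge s$ the following invariants, holding whenever $x\in F_r$: (i)~$\tau\in R_r(x)$ and $k^\tau_r(x)=k^\tau_s(x)$, so $M^\tau_r(x)=M^\tau_s(x)$ and $t^\tau_r(x)=t^\tau_s(x)$; (ii)~if $r$ is a $\tau$-\emph{stage}, then $v^\tau_r(x)\converge$ and equals $v$; (iii)~$\tau$ does not permit $x$ at any stage $r'\in(s,r]$. The inductive step rests on one observation, read off directly from the construction and from Lemma~\ref{lem:k and k-top}: the parameters $R_\cdot(x)$, $k^\tau_\cdot(x)$, $M^\tau_\cdot(x)$ and $t^\tau_\cdot(x)$ are modified only in cases~2 and~3 of the construction, and in both cases the node whose parameters are modified is $\top_{r-1}(x)$ and must have just permitted $x$. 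Thus invariant~(iii) applied at stage $r$ yields (i) at stage $r+1$. Invariant~(ii) follows because $W^e$ does not change below $v$, so every witness to $t^\tau_s(x)\in T^\tau_s(z)$ with use at most $v$ persists for $z\in M^\tau_s(x)$, keeping the $\tau$-computation defined with the same use. For~(iii) at stage $r+1$: if $\tau=\top_r(x)$ and $r+1$ is a $\tau$-\emph{stage}, and $r''\in[s,r]$ is the previous $\tau$-\emph{stage} (note that $s$ itself qualifies), then $v^\tau_{r''}(x)=v$ by~(ii) and $W^e_{r+1}\rest v=W^e_{r''}\rest v$ by hypothesis, so the permission clause fails.

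To conclude, I would suppose for contradiction that $x\in E$, and let $u>s$ be the stage at which $x$ is enumerated. Enumeration occurs only via case~1 of the construction, which requires $R_{u-1}(x)=\{\top_{u-1}(x)\}$ and that this unique top permit $x$ at stage $u$. By invariant~(i), $\tau\in R_{u-1}(x)$, so $R_{u-1}(x)=\{\tau\}$ and $\top_{u-1}(x)=\tau$; then $\tau$ permits $x$ at stage $u$, directly contradicting invariant~(iii). The main delicacy will be in verifying the inductive step for invariant~(i) across case~3 of the construction, where $\top$ may shift among the elements of $R$ without $\tau$ itself acting; that the shift leaves $\tau$'s own parameters intact is routine from the case analysis in the proof of Lemma~\ref{lem:k and k-top}, but it is the one place where care is needed.
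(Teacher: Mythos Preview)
Your proposal is correct and follows essentially the same route as the paper, which simply defers to the proof of Lemma~\ref{lem_MP_non_permission_non_enumeration} together with Lemma~\ref{lem_MA_top_will_come} for the endgame. One small correction to the inductive bookkeeping: invariant~(i) at stage $r+1$ actually requires non-permission by $\tau$ at stage $r+1$, not merely invariant~(iii) at stage~$r$ as you wrote, so the correct order in the step is to first establish the stage-$(r{+}1)$ case of~(iii) (from~(ii) at the previous $\tau$-\emph{stage} $r''\in[s,r]$, which is already available by induction), then read off~(i), then~(ii); your direct jump to the enumeration stage $u$ and the observation $R_{u-1}(x)=\{\tau\}$ is a perfectly good substitute for the paper's appeal to Lemma~\ref{lem_MA_top_will_come}.
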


\begin{proof}
	Identical to the proof of Lemma	\ref{lem_MP_non_permission_non_enumeration}, using Lemma \ref{lem_MA_top_will_come}.
\end{proof}

Let $H$ be the collection of followers which are never cancelled nor enumerated into $E$. We use notation, such as $R_\w(x)$, similar to the notation we used before. 

\begin{lemma}\label{lem_MA_final_uses}
	Let $x\in H$ and let $\tau\in R_\w(x)$. There is some $\tau$-\emph{stage} $s$ such that $x\in F_s$, $v^\tau_s(x)\converge$ and $W^e\rest{v^\tau_s(x)} = W^e_s\rest{v^\tau_s(x)}$.
\end{lemma}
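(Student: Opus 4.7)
The proof will be essentially a translation of the proof of Lemma \ref{lem_MP_final_uses} into the tree setting, replacing $i<2$ with $\tau\in\NNN$ and \emph{stage} with $\tau$-\emph{stage}. Let $k = k^\tau_\w(x)$ and $t = t^\tau(k,x)$. First, I would observe that since $x\in H$ and $\tau\in R_\w(x)$, the node $\s(x)$ extends $\conc{\tau}{\infty}$, and $\s(x)$ lies on the true path (otherwise $x$ would be cancelled, as some node to the left of $\s(x)$ would receive attention infinitely often and trigger a cancellation at some stage after $x$ was appointed). Hence $\conc{\tau}{\infty}$ is on the true path, so there are infinitely many $\tau$-\emph{stages}. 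Since $\tau \in \NNN^{e,c}$ and infinitely many $\tau$-\emph{stages} occur, the approximation $\psi^\tau_s$ has $\psi^\tau_s(z) \in T^\tau_s(z)$ at a cofinal set of stages for each $z$ mentioned, so $T^\tau$ traces $\psi^\tau$.

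Next, I would split into two cases according to whether $\top_\w(x) = \tau$. In the first case, either $x\in G^\tau_\w(k)$, or $x\in L^\tau_\w(k)$ and is never realised. By the analog of Lemma \ref{lem_MP_no_changes_to_psi} (stated earlier in the justification of this section, or obtained by reading that proof with $i$ replaced by $\tau$), for every $z\in M^\tau(k,x)$ we have $\psi^\tau(z) = t$. Since $T^\tau$ traces $\psi^\tau$, $t\in T^\tau(z)$ for each such $z$, with some $W^e$-use. Choose a $\tau$-\emph{stage} $s$ large enough that for every (finitely many) $z\in M^\tau(k,x)$, $t\in T^\tau_s(z)$ via a $W^e$-correct computation; such $s$ witnesses the lemma.

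In the second case, $x\in KL^\tau_\w(k)$ and, in the $L$-subcase, $x$ is eventually realised. Pick a $\tau$-\emph{stage} $s>t$ with $x\in \bar{KL^\tau_s}(k)$; the analog of Lemma \ref{lem_MP_followers_in_KLG_are_traced} yields $v^\tau_s(x)\converge$. Suppose for contradiction that $s$ is not as required; then there is a least $\tau$-\emph{stage} $r>s$ at which $W^e_r\rest{v^\tau_s(x)}\ne W^e_s\rest{v^\tau_s(x)}$. By minimality and the fact that $k^\tau_\w(x)=k$, we have $\tau = \top_{r-1}(x)$ and $v^\tau_{r-1}(x) = v^\tau_s(x)$; hence $x$ is permitted at stage $r$. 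Since $x\in H$, at stage $r$ the instructions either promote $x$ (strictly decreasing $k^\tau(x)$) or remove $\tau$ from $R(x)$, contradicting $k^\tau_\w(x)=k$ and $\tau\in R_\w(x)$.

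The main obstacle, and the one issue that is genuinely new compared with the two-oracle version, is the first paragraph: justifying that $T^\tau$ really traces $\psi^\tau$ along the true path. This has to be inferred from the tree structure rather than assumed hypothetically, and relies on the fact that $\s(x)$ cannot be cancelled. Once this is in hand, the remaining two cases are direct rewrites of the corresponding cases in Lemma \ref{lem_MP_final_uses}.
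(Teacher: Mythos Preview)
Your two-case argument is exactly the paper's: once it is known that there are infinitely many $\tau$-\emph{stages} and that $T^\tau$ traces $\psi^\tau$, the proof is a direct transcription of Lemma~\ref{lem_MP_final_uses}, and you carry that out correctly.

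The problem is your first paragraph. In the paper, this lemma is stated under standing hypotheses fixed at the start of the ``Reductions'' subsection: $W^e$ is SJT-hard, so some $T^{e,c}$ traces $\psi^e$; and $\tau\in\NNN^{e,c}$ is a particular node on the true path, with $\conc{\tau}{\infty}$ on the true path by Lemma~\ref{lem_SJT_hard_on_true_path}. Thus both ``infinitely many $\tau$-\emph{stages}'' and ``$T^\tau$ traces $\psi^\tau$'' are \emph{assumptions}, not consequences of $x\in H$ and $\tau\in R_\w(x)$. Your attempt to derive them is unnecessary, and it has two gaps. First, $x\in H$ only prevents $\s(x)$ from lying to the \emph{right} of the true path; $\s(x)$ could perfectly well lie to the left (followers for such nodes are never cancelled). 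Second, and more seriously, the implication ``infinitely many $\tau$-\emph{stages} $\Rightarrow$ $T^\tau$ traces $\psi^\tau$'' is not valid: having $\psi^\tau(z)\in T^\tau_s(z)$ at cofinally many $s$ does not force $\psi^\tau(z)\in T^\tau(z)$, since each apparent enumeration can be injured by a later $W^e$-change with the use drifting to infinity. Lemma~\ref{lem_SJT_hard_on_true_path} gives only the converse direction. So the ``main obstacle'' you flag is not an obstacle at all --- just read the subsection's preamble and take the tracing as given.
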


\begin{proof}
	Identical to the proof of Lemma \ref{lem_MP_final_uses}, using the assumption that $T^\tau$ traces $\psi^\tau$. 
\end{proof}

\begin{lemma}\label{lem_MA_reduction}
	$E\le_\Tur W^e$. 
\end{lemma}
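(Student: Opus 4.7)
My plan is to follow the template of Lemma \ref{lem_MP_second_reduction} essentially verbatim. The reduction is the following $W^e$-computable procedure: given input $x$, first check computably whether $x \in F_x$; if not, $E(x)$ equals $E_x(x)$. Otherwise, using the $W^e$-oracle, search for a $\tau$-\emph{stage} $s>x$ at which at least one of the following witnesses appears: (i) $x\notin F_s$; (ii) $x\in E_s$; or (iii) $\tau\in R_s(x)$, $v^\tau_s(x)\converge$, and $W^e\rest{v^\tau_s(x)} = W^e_s\rest{v^\tau_s(x)}$. In cases (i) and (ii) $E(x)$ is read off directly; in case (iii), Lemma \ref{lem_MA_non_permission_non_enumeration} yields $x\notin E$. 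Because $\conc{\tau}{\infty}$ lies on the true path there are infinitely many $\tau$-\emph{stages}, and each of the three conditions is $W^e$-decidable at any given $\tau$-\emph{stage}, so the search is $W^e$-effective.

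By Lemma \ref{lem_MA_final_uses} the search terminates for every $x\in H$ with $\tau\in R_\w(x)$. The crux is the tree-analogue of Lemma \ref{lem_MP_main_A1_lemma}: the exceptional set $\mathcal{E} = \{x\in H : \tau\notin R_\w(x)\}$ is finite. Once this is established, the $W^e$-reduction is non-uniform --- hardcoding $E$-membership on $\mathcal{E}$ --- and computes $E(x)$ correctly for every other $x$, exactly as at the end of Lemma \ref{lem_MP_second_reduction}.

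To show $\mathcal{E}$ is finite I split according to the position of $\s(x)$ relative to $\conc{\tau}{\infty}$. If $\s(x)$ lies lexicographically to the right of $\conc{\tau}{\infty}$, then the true path passes through infinitely many $\PPP$-nodes $\s'$ extending $\conc{\tau}{\infty}$, each of which is stronger than $\s(x)$ and eventually makes an appointment that cancels $x$; so no such $x$ lies in $H$. If $\s(x)$ is strictly to the left of $\conc{\tau}{\infty}$, then since $\conc{\tau}{\infty}$ is on the true path only finitely many stages visit nodes left of $\conc{\tau}{\infty}$, producing only finitely many appointments there. If $\s(x)\subsetneq \tau$, then $\s(x)$ ranges over finitely many true-path $\PPP$-nodes, each contributing only finitely many uncancelled followers by Lemma \ref{lem_true_path_P}. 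Finally, if $\s(x)\supseteq \conc{\tau}{\infty}$ then $\tau\in R_s(x)$ at the appointment of $x$ and was later removed, which can happen only via case (2) of the construction, at a stage where $\top_{s-1}(x)=\tau$ and $k^\tau_{s-1}(x)=|\tau|$; mimicking the minimal-pair argument, every such $x\in H$ eventually stabilises with $\top_\w(x)=\rho$ and $k^\rho_\w(x)=k$ for some $\rho\in R_\w(x)$ distinct from $\tau$, placing $x$ in $K^\rho_\w(k)$, whose size is $\le k$ by the tree-analogue of Lemma \ref{lem_MP_bound_on_K}. The pairs $(\rho,k)$ that actually arise are themselves limited: the contributing on-true-path $\s(x)$ have finitely many uncancelled followers by Lemma \ref{lem_true_path_P}, and off-true-path $\s(x)$ extending $\conc{\tau}{\infty}$ must branch at some $\rho'\supseteq \tau$ where $\conc{\rho'}{\infty}$ is accessed only finitely often, so contribute only finitely many appointments.

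The step I expect to be the main obstacle is the bookkeeping in the last case: confirming that across all possible stable terminal positions $(\rho,k)$ and all $\s(x)\supseteq \conc{\tau}{\infty}$ the exceptional count is genuinely finite. This is precisely the principle advertised in the discussion preceding the construction --- that the tree machinery ensures each SJT-hard c.e.\ set $W^e$ is troubled by only finitely many followers whose existence it cannot detect through its guessing along the true path.
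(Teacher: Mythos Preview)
Your overall structure is right: the $W^e$-search over $\tau$-\emph{stages} for one of the three witnesses, the appeal to Lemma~\ref{lem_MA_non_permission_non_enumeration} and Lemma~\ref{lem_MA_final_uses}, and the case split on the position of $\s(x)$ relative to $\conc{\tau}{\infty}$ all match the paper. The genuine gap is exactly where you flagged it: the finiteness argument in the last case, $\s(x)\supseteq\conc{\tau}{\infty}$ with $x\in H$ and $\tau\notin R_\w(x)$.

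Your argument there is circular. You place $x$ into $K^\rho_\w(k)$ and bound $|K^\rho_\w(k)|\le k$, then say you will bound the pairs $(\rho,k)$---but the argument you give for that bound is a bound on the \emph{followers} $x$, not on the pairs. And that bound on followers does not hold: there are infinitely many $\PPP$-nodes on the true path extending $\conc{\tau}{\infty}$, each with its own permanent follower in $H$, and likewise infinitely many branching points $\rho'\supseteq\tau$ along the true path where off-path $\s(x)$ can live. Nothing in your argument limits $\rho$ or $k$, so nothing prevents infinitely many distinct pairs $(\rho,k)$, each contributing its $\le k$ followers.

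The missing ingredient is Lemma~\ref{lem:k and k-top}. When $\tau$ is removed from $R(x)$ at stage $r+1$, that lemma's ``Also'' clause says $\tau$ was the \emph{longest} string in $R_r(x)$; hence every element of $R_{r+1}(x)$, and in particular $\rho=\top_\w(x)$, is a proper initial segment of $\tau$. Moreover, since $\top_r(x)=\tau$ and $k^\tau_r(x)=|\tau|$, part~(1) of the same lemma gives $k^\rho_r(x)=|\tau|$, whence $k^\rho_\w(x)\le|\tau|$. This pins down the pairs directly: $\rho$ ranges over the finitely many $\NNN$-nodes properly contained in $\tau$, and $k\le|\tau|$. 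Now the bound $x\in\bigcup_{\rho\subset\tau,\,\rho\in\NNN}\bigcup_{k\le|\tau|}KL^\rho_\w(k)$ is a finite union of finite sets, and you are done. This is the exact tree analogue of Lemma~\ref{lem_MP_main_A1_lemma}, where in the two-oracle case the single pair was $(0,c)$.
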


\begin{proof}
	Similar to the proofs of Lemmas \ref{lem_MP_first_reduction} and \ref{lem_MP_second_reduction}. 
	
	Let $x\in F_x$. To find, with oracle $W^e$, whether $x\in E$ or not, wait for a $\tau$-\emph{stage} $s>x$ at which one of the following holds:
	\begin{itemize}
		\item $x$ has been cancelled by stage $s$;
		\item $x\in E_s$; or
		\item $\tau\in R_s(x)$, $v^\tau_s(x)\converge$ and $W^e\rest{v^\tau_s(x)} = W^e_s\rest{v^\tau_s(x)}$. 
	\end{itemize}
	If such a stage $s$ is found, then by Lemma \ref{lem_MP_non_permission_non_enumeration}, $W^e$ can decide at stage $s$ if $x\in E$ or not. 
	
	\medskip
	
	We claim that such a stage $s$ can be found for all but finitely many followers $x$. First, note that there are only finitely many followers $x\in \bigcup_s F_s$ such that $\s(x)$ is stronger than $\conc{\tau}{\infty}$: those $\s$ that lie to the left of $\tau$ are visited only finitely many times, and those that are extended by $\tau$ lie on the true path, and so appoint only finitely many followers by Lemma \ref{lem_true_path_P}. 
	
	Hence, we let $x\in \bigcup_s F_s$ and assume that $\s(x)$ is not stronger than $\conc{\tau}{\infty}$. If $\s(x)$ lies to the right of $\conc{\tau}{\infty}$, then any $\conc{\tau}{\infty}$ stage $s>x$ satisfies the condition above: either $x$ is cancelled by stage $s$, or $x\in E_s$. Suppose, then, that $\s(x)\supseteq \conc{\tau}{\infty}$. Thus, $\tau\in R_t(x)$ where $t$ is the stage at which $x$ is appointed. 
	
	If $x\notin H$ then a stage as above is definitely found. Suppose that $x\in H$. If $\tau$ is never removed from $R(x)$, then Lemma \ref{lem_MA_final_uses} ensures that a stage $s$ as above is found. Otherwise, let $\rho = \top_\w(x)$. Let $r+1$ be the stage at which $\tau$ is removed from $R(x)$; so $k^\tau_r(x) = |\tau|$. Lemma \ref{lem:k and k-top} shows that $\rho\subset \tau$, and that $k^\rho_r(x) = |\tau|$. So $k^\rho_\w(x) \le |\tau|$. Thus
	\[ x\in \bigcup_{\rho\subset \tau, \rho\in \NNN} \bigcup_{k\le |\tau|} KL^\rho_\w(k).\]
However, this set is the finite union of finite sets, and so is finite. 	
\end{proof}

%
%

\end{document}